\newcommand{\sNetName}{\textit{BNet}}
\newcommand{\NetName}{\textit{Butterfly-net}}
\newcommand{\sINetName}{\textit{IBNet}}
\newcommand{\INetName}{\textit{Inflated-Butterfly-net}}
\newcommand{\Kfun}[1]{e^{-2 \pi \imath #1}}
\newcommand{\negKfun}[1]{e^{2 \pi \imath #1}}
\newcommand{\LebConst}{\frac{2}{\pi}\ln r + 1}
\newcommand{\coef}[2]{\lambda^{#1}_{#2}}
\newcommand{\exeq}{\overset{\diamond}{=}}
\title{Butterfly-Net: Optimal Function Representation
Based on Convolutional Neural Networks}
\author{
    Yingzhou Li$^{\,\sharp}$,
    Xiuyuan Cheng$^{\,\sharp}$,
    Jianfeng Lu$^{\,\sharp\,\dagger}$
    \vspace{0.1in}\\
    $\sharp$ Department of Mathematics, Duke University\\
    $\dagger$ Department of Chemistry and Department of Physics,
    Duke University\\
}
\begin{document}

\maketitle

\begin{abstract}
    Deep networks, especially convolutional neural networks (CNNs),
    have been successfully applied in various areas of machine
    learning as well as to challenging problems in other scientific
    and engineering fields. This paper introduces \NetName{}, a
    low-complexity CNN with structured and sparse cross-channel
    connections, together with a {\it Butterfly} initialization
    strategy for a family of networks. Theoretical analysis of the
    approximation power of \NetName{} to the Fourier representation
    of input data shows that the error decays exponentially as the
    depth increases. Combining \NetName{} with a fully connected
    neural network, a large class of problems are proved to be well
    approximated with network complexity depending on the effective
    frequency bandwidth instead of the input dimension. Regular
    CNN is covered as a special case in our analysis. Numerical
    experiments validate the analytical results on the approximation
    of Fourier kernels and energy functionals of Poisson's
    equations. Moreover, all experiments support that training
    from {\it Butterfly} initialization outperforms training from
    random initialization. Also, adding the remaining cross-channel
    connections, although significantly increase the parameter number,
    does not much improve the post-training accuracy and is more
    sensitive to data distribution.
\end{abstract}

\section{Introduction}

Deep neural network is a central tool in machine learning and
data analysis nowadays~\cite{bengio2015deep}. In particular,
convolutional neural network (CNN) has been proved to be a
powerful tool in image recognition and representation. Deep
learning has also emerged to be successfully applied in solving
PDEs~\cite{berg2017unified, khoo2018solving, long2017pde} and physics
problems~\cite{behler2007generalized, weinan2017deep, Li2019b,
schneider2017stochastic, zhang2018deepcg}, showing the potential of
becoming a tool of great use for computational mathematics and physics
as well. Given the wide application of PDEs and wavelet based methods
in image and signal processing~\cite{cai2012image, chan2005image,
mallat2008wavelet}, an understanding of CNN's ability to approximate
differential and integral operators will lead to an explanation of
CNN's success in these fields, as well as possible improved network
architectures.

The remarkable performance of deep neural networks across various
fields relies on their ability to accurately represent functions of
high-dimensional input data. Approximation analysis has been a central
topic to the understanding of the neural networks. The classical
theory developed in 80's and early 90's~\cite{barron1993universal,
cybenko1989approximation, hornik1989multilayer} approximates a target
function by a linear combination of sigmoids, which is equivalent to a
fully connected neural network with one hidden layer. While universal
approximation theorems were established for such shallow networks,
the research interest in neural networks only revived in recent
years after observing the successful applications of deep neural
networks, particular the superior performance of CNNs in image and
signal processing.

Motivated by the empirical success, the approximation advantage
of deep neural networks over shallow ones has been theoretically
analyzed in several places. However, most results assume stacked
fully connected layers and do not apply to CNNs which have specific
geometrical constraints: (1) the convolutional scheme, namely
local-supported filters with weight sharing, and (2) the hierarchical
multi-scale architecture. The approximation power of deep networks
with hierarchical geometrically-constrained structure has been
studied recently~\cite{cohen2016expressive, mhaskar2016learning,
mhaskar2016deep}, yet the network architecture differ from the regular
CNN. The approximation theory of CNN has been studied in~\cite{Bao2019,
Zhou2018}. We review the related literature in more detail below.

\begin{figure}[t]
    \centering
    \includegraphics[width=0.9\linewidth]{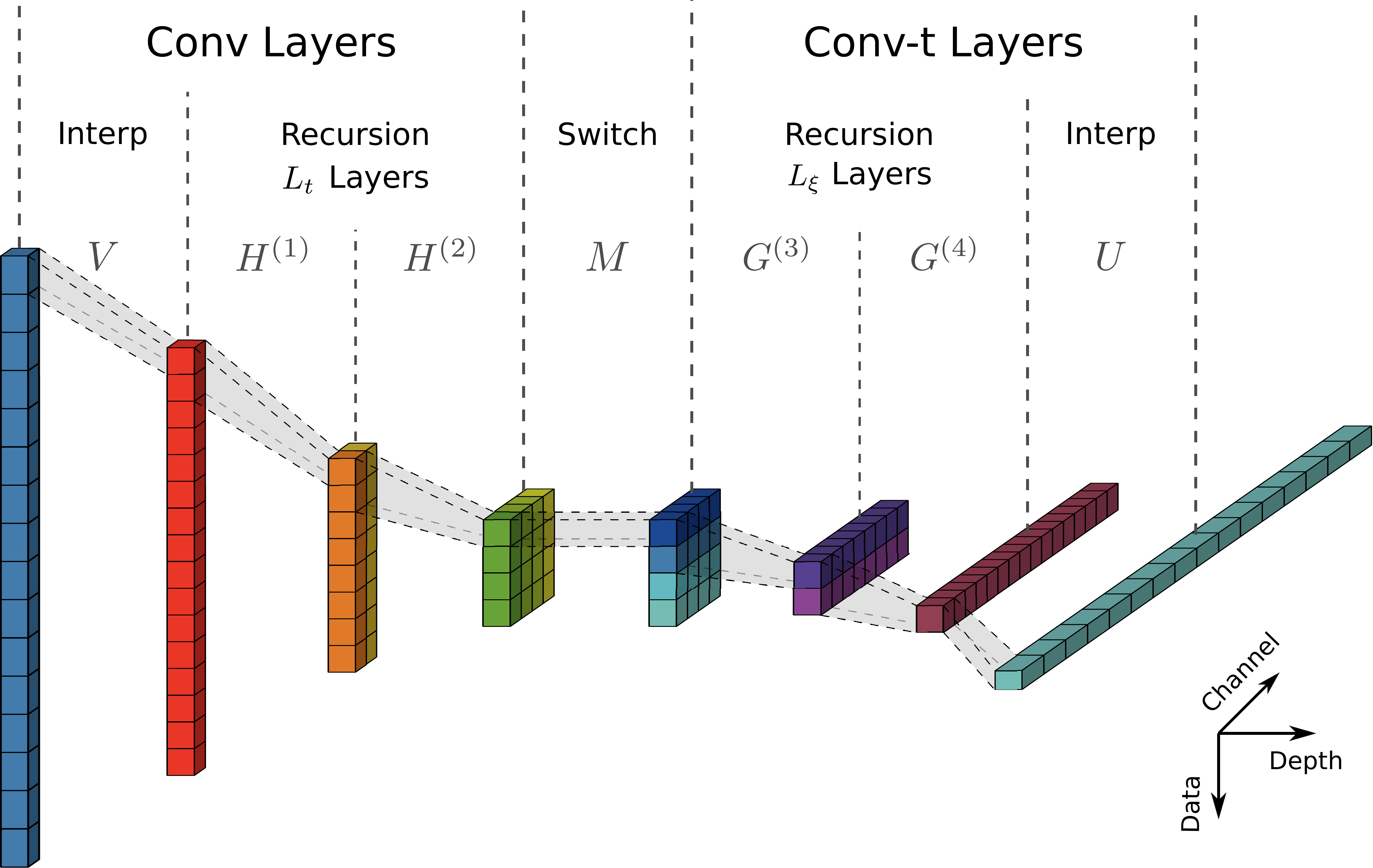}
    \caption{Illustration of a \NetName{}: The network consists of 3
    modules, (1) Conv layers, which has one Interpolation later and
    $L_t$ Recursion layers, (2) A Switch layer, and (3) Conv-t layers,
    which has $L_\xi$ Recursion layers and one Interpolation later.
    The conv and conv-t layers have convolutional and transpose
    convolutional structure respectively, and the Switch layer only
    involves local operations.  Detailed structures of these layers
    are given in Section~\ref{sec:bnet}.  This figure shows the case
    when $L_t = L_\xi = 2$, and the notations $V, H^{(\ell)}, M,
    G^{(\ell)}, U$ denote the matrix representations of the operations
    in the scheme, c.f. Section~\ref{sec:bnetMatrix}.  The name {\it
    ``butterfly''} can be understood from the shape of the network.}
    \label{fig:butterflyNet}
\end{figure}

This paper proposes a specific architecture under the CNN framework
based on the {\it Butterfly} scheme originally developed for the fast
computation of special function transforms~\cite{Michielssen1996,
ONeil2010, Ying2009} and Fourier integral operators~\cite{Candes2007,
Candes2009, Li2017, Li2015a, Li2015b, Li2018}. {\it Butterfly} scheme
provides a hierarchical structure with locally low-rank interpolation
of kernel functions and can be applied to solve many PDE related
problems. In terms of computational complexity, the scheme is near
optimal for Fourier kernels and Fourier integral operators. The
proposed \NetName{} explicitly adopts the hierarchical structure in
{\it Butterfly} scheme as the stacked convolutional layers. If the
parameters are hard-coded as that in the {\it Butterfly} scheme
({\it Butterfly} initialization), then \NetName{} collectively
computes the Fourier coefficients of the input signal with guaranteed
numerical accuracy. Unlike regular CNN which has dense cross-channel
connections, the channels in the \NetName{} have clear correspondences
to the frequency bands, namely the position in the spectral
representation of the signal, and meanwhile, the cross-channel weights
are sparsely connected. In this paper, we also study \NetName{} with
dense cross-channel connections, which is named \INetName{}. Regular
CNN is a special \INetName{}~\cite{Xu2019}. Comparing \NetName{}
and \INetName{}, \NetName{} is much lighter: the model complexity
(in terms of parameter number) is $O(K \log N)$ and computational
complexity is $O(N\log N)$, where $N$ is the length of the discrete
input signal and $K$ is the frequency bandwidth.

The approximation error of \NetName{} in representing Fourier kernels
is proved to exponentially decay as the network depth increases, which
is numerically validated in Section~\ref{sec:experiment}. Due to the
efficient approximation of Fourier kernels, \NetName{} thus possesses
all approximation properties of the Fourier representation of input
signals, which is particularly useful for solving PDEs and (local)
Fourier-based algorithms in image and signal processing. Theoretically,
the approximation guarantee for \NetName{} to represent Fourier
kernels leads to an approximation result of a family of sparsified
CNNs and regular CNNs. The informal statement of our main result on
neural network function approximation is as follows:

\begin{theorem}[Informal version]
    Consider using neural network to approximate a function
    $f(\vec{x})$, $\vec{x} \in \calX \subset \bbR^n$, where $f$ can
    be approximated by a band-limited function of $\vec{x}$ with
    bandwidth $K$ (this may be due to the frequency decay of $f$,
    or $\calX$, or both). Then there exists a family of \NetName{}s
    whose numbers of parameters are all bounded by
    \begin{equation}
            n_b \lesssim K \log N,
    \end{equation}
    such that using anyone of them to extract a deep feature of length
    $K$ from the input $x$ can reduce the effective dimension of the
    input data from $N$ to $K$ for neural network function approximation.
\end{theorem}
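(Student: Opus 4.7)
The plan is to prove the theorem by explicit composition: use a \NetName{} as a front-end to extract approximate Fourier coefficients of the input $\vec{x}$ up to frequency $K$, then feed these $K$ features into a fully connected sub-network that approximates $f$ viewed as a function on the Fourier side. Since the excerpt already advertises an exponential-in-depth approximation bound for the \NetName{} on Fourier kernels, together with the model complexity $O(K\log N)$, I can treat those as black boxes and reduce the theorem to (a) a Fourier-side representation of $f$, and (b) a universal approximation statement on a $K$-dimensional feature vector.

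First, I would use the band-limited hypothesis on $f$ to write $f(\vec{x}) \approx g(\mathcal{F}_K \vec{x})$, where $\mathcal{F}_K$ denotes the extraction of the $K$ Fourier modes carrying essentially all of $f$'s sensitivity to $\vec{x}$ on $\calX$, and $g\colon\bbR^K\to\bbR$ is the induced function on the feature side; the error $\epsilon_{\mathrm{band}}$ is controlled by the band-limiting assumption. Next, I would invoke the \NetName{} Fourier approximation theorem to construct a network whose output $\mathrm{BN}(\vec{x})$ approximates $\mathcal{F}_K\vec{x}$ with error $\epsilon_{\mathrm{bnet}}$ that decays exponentially in the depth $L_t+L_\xi$, while its parameter count remains $O(K\log N)$. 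Finally, I would invoke a classical universal approximation result for shallow or deep fully connected networks on $\bbR^K$ to build an approximant $\mathrm{FC}$ of $g$ with some error $\epsilon_{\mathrm{fc}}$ and parameter count depending on $K$ and the target accuracy but \emph{not} on $N$. The composite network $\mathrm{FC}\circ\mathrm{BN}$ then represents $f$ up to a sum of the three errors, and its total parameter count is bounded by $\lesssim K\log N$ plus a term independent of $N$.

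The step I expect to be the main obstacle is clean error propagation through the composition, because the fully connected tail is applied to a perturbed argument $\mathrm{BN}(\vec{x})$ rather than to the exact features $\mathcal{F}_K\vec{x}$. Closing this gap requires Lipschitz-type control on the constructed $\mathrm{FC}$, either by an explicit piecewise-linear or low-degree polynomial construction, or by working in an approximation class robust to input perturbations on the order of $\epsilon_{\mathrm{bnet}}$. A secondary but necessary task is to formalize the phrase \emph{effective dimension $K$}: I would do so by showing that the fully connected tail needs a number of parameters that scales only with $K$ (and the desired accuracy) for the composite to reach any prescribed overall accuracy, so that all $N$-dependence is confined to the logarithmic Butterfly factor. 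The remaining work — tuning the \NetName{} depth to balance the three error sources $\epsilon_{\mathrm{band}},\epsilon_{\mathrm{bnet}},\epsilon_{\mathrm{fc}}$ and auditing the sparsity pattern to verify the $K\log N$ parameter bound — is routine bookkeeping but should be spelled out.
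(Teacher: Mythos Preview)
Your proposal is essentially the paper's own argument: the formal version (Theorem~\ref{thm:approx_bnet}) proceeds by exactly this three-term split $|f(\vec{x})-g_K(\hat{x}|_{I_K})| + |g_K(\hat{x}|_{I_K})-g_K(\calB(\vec{x}))| + |g_K(\calB(\vec{x}))-\phi_K(\calB(\vec{x}))|$, invoking Corollary~\ref{cor:bnet-para} for the \NetName{} Fourier bound and Assumption~\ref{assump:f}(iii) for the fully connected tail. The one refinement worth noting is that the paper handles your ``main obstacle'' by placing the Lipschitz hypothesis on $g_K$ (Assumption~\ref{assump:f}(ii)) rather than on the constructed $\phi_K$, and by requiring the uniform approximation of $\phi_K$ to hold on a slightly inflated ball $\hat{B}_{1.1}^K$ so that $\calB(\vec{x})$ still lands in its domain; this avoids having to control the Lipschitz constant of the network approximant itself.
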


The dimension reduction is reflected by the needed network complexity
in approximating $f(\vec{x})$ by a fully-connected network, namely
a reduction from a factor of $\varepsilon^{-N/s}$ (for a fully
connected net to approximate $f(\vec{x})$ directly) to that of
$\varepsilon^{-K/s}$ (for a fully connected net to approximate $g$,
\NetName{} to approximate $\calB$, and $f(\vec{x}) \approx g(\calB
\vec{x})$), where $s$ is the regularity level and $\varepsilon$
is the uniform approximation error of $f$. The precise statement is
given in Theorem~\ref{thm:approx_bnet}.

Moreover, in the above statement, the family of \NetName{}s can
be replaced by the corresponding family of \INetName{} with $n_b
\lesssim K^2 \log N$ parameters and the same dimension reduction
argument still holds.  In particular, one member in the family of
\INetName{}s is a regular CNN. Hence, our approximation analysis
covers regular CNNs as a special case.

\subsection{Contributions}

Our contributions can be summarized as follows.

\begin{enumerate}[1)]

    \item We propose a family of novel neural network architectures,
    named \NetName{}s, which are composed of convolutional and
    transpose convolutional layers with sparse cross-channel
    connections, plus a locally connected  switch layer in between.
    An associated {\it Butterfly} initialization strategy is proposed
    for \NetName{}s to approximate Fourier kernels.  The \NetName{}
    architecture can be inflated via replacing the sparse cross-channel
    connections by dense connections, and this contains regular CNN
    as a special case.

    \item The approximation error of \NetName{}s representing the
    Fourier kernels is theoretically proved to be exponentially
    decay as the network depth increases. Concatenating \NetName{}
    (or its inflated version) with a fully-connected layer, we
    provide an approximation analysis for a wide class of functions
    with frequency decay property and the approximation complexity
    depends on the effective dimension $K$ instead of the input data
    dimension $N$. The regular CNN is covered as a special case.

    \item Numerically, we apply \NetName{} and its inflated version
    to a wide range of datasets. The successful trainings on all
    datasets support our approximation analysis. Further, we find
    that training from {\it Butterfly} initialization in all cases
    outperforms training from random initialization, especially
    when the output function only depends on a few frequencies among
    the wide frequency band of the input data.  \NetName{} achieves
    similar post-training accuracy as its inflated version with far
    less number of parameters.  \NetName{} also admits better transfer
    learning capability when the distribution of the testing data is
    shifted away from the training data.
    
\end{enumerate}

\subsection{Related Works}

Before we explain more details in the rest of the paper, we review
some related works.

{\bf Fast algorithm inspired neural network structures.}
Fast algorithm has inspired several neural network structures
recently. Based on $\mathcal{H}$-matrix and $\mathcal{H}^2$-matrix
structure, Fan and his coauthors proposed two multiscale neural
networks~\cite{Fan2018, Fan2019a}, which are more suitable in training
smooth linear or nonlinear operators due to the multiscale nature of
$\mathcal{H}$-matrices. In addition to that, nonstandard wavelet form
inspired the design of BCR-Net~\cite{Fan2019}, which is applied to
address the inverse of elliptic operator and nonlinear homogenization
problem and recently been embedded in a neural network for solving
electrical impedance tomography~\cite{Fan2019b} and pseudo-differential
operator~\cite{Feliu-Faba2019}. Multigrid method also inspired
MgNet~\cite{He2019}. In addition to the above approximation of
relatively smooth operators, {\it Butterfly} scheme inspired the
design of SwitchNet~\cite{Khoo2018}, which is a non-convolutional
three layer neural network and addresses scattering problems.

{\bf Classical approximation results of neural networks.} Universal
approximation theorems for fully-connected neural networks with one
hidden layer were established in~\cite{cybenko1989approximation,
hornik1989multilayer} showing that such networks can approximate
a target function with arbitrary accuracy if the hidden layer is
allowed to be wide enough. In theory, the family of target functions
can include all measurable functions~\cite{hornik1989multilayer}, when
exponentially many hidden neurons are used. \citet{gallant1988there}
proposed ``Fourier network'', proving universal approximation to
squared-integrable functions by firstly constructing a Fourier
series approximation of the target function in a hard-coded
way. These theorems are firstly proved for one-dimensional input,
and when generalizing to the multivariate case the complexity grows
exponentially.

Using the Fourier representation of the target function supported on
a sphere in $\bbR^d$, \citet{barron1993universal} showed that the
mean squared error of the approximation, integrated with arbitrary
data distribution on the sphere, decays as $O(n^{-1})$ when $n$
hidden nodes are used in the single hidden layer. The results for
shallow networks are limited, and the approximation power of depth
in neural networks has been advocated in several recent works,
see below.  Besides, while the connection to Fourier analysis was
leveraged, at least in~\cite{barron1993universal, gallant1988there},
it is different from the hierarchical function representation scheme
as what we consider here.

{\bf Approximation power of deep neural networks.} The expressive
power of deep neural networks has drawn many research interests
in recent years. The approximation power of multi-layer restricted
Boltzmann machines (RBM) was studied in~\cite{le2008representational},
which showed that RBMs are universal approximators of discrete
distributions and more hidden layers improves the approximation
power. Relating to the classical approximation results in harmonic
analysis, \citet{bolcskei2017optimal} derived lower bounds for the
uniform approximation of square-integrable functions, and proved the
asymptotic optimality of the sparsely connected deep neural networks
as a universal approximator. However, the network complexity also
grows exponentially when the input dimension increases.

The approximation advantage of deep architecture over shallow ones
has been studied in several works. \citet{delalleau2011shallow}
identified a deep sum-product network which can only be approximated
by an exponentially larger number of shallow ones. The exponential
growth of linear regions as the number of layers increases was
studied in \cite{montufar2014number, telgarsky2016benefits}.
\citet{eldan2016power} constructed a concrete target function which
distinguishes three and two-layer networks. \citet{liang2016deep}
showed that shallow networks require exponentially more neurons than
deep networks to obtain a given approximation error for a large
class of functions. The advantage of deep ReLU networks over the
standard single-layer ones was analyzed in \cite{yarotsky2017error}
in the context of approximation in Sobolev spaces. \citet{Lu2020a}
shows the advantage of deep ReLU networks in approximating smooth
(band-limited) functions. The above works address deep networks with
fully-connected layers, instead of having geometrically-constrained
constructions like CNNs.

Deep neural networks with such geometric constraints are relatively
less analyzed.  The approximation power of a hierarchical
binary tree network was studied in \cite{mhaskar2016learning,
mhaskar2016deep} which supports the potential advantage of deep
CNNs. \citet{cohen2016expressive} used convolutional arithmetic
circuits to show the equivalence between a deep network and a
hierarchical Tucker decomposition of tensors, and proved the
advantage of depth in function approximation. The networks being
studied differ from the regular CNNs widely used in the typical real
world applications. Recently, \citet{Zhou2018} proposed the universal
approximation theory of deep CNN with an estimation on the number
of free parameters. \citet{Bao2019} shows the approximation power
of CNNs over deep neural networks without constraints on a class of
functions. Comparing to~\cite{Zhou2018}, our analysis covers a wider
range of networks and the approximation complexity is much lower on
a restricted function class. Comparing to~\cite{Bao2019}, different
function classes are discussed and we both show the advantage of CNN.

\subsection{Organization}

The rest of this paper is organized as follows. Building
on top of the traditional {\it Butterfly} literature,
Section~\ref{sec:preliminaries} first shows the low-rank property of
Fourier kernel and illustrates the {\it Butterfly} algorithm tailored
for Fourier kernel. Section~\ref{sec:bnet} proposes interpolative
convolutional layer as building blocks for \NetName{} followed by
the \NetName{} architecture with {\it Butterfly} initialization
and its matrix representation. Section~\ref{sec:analysis} analyzes
the approximation power of \NetName{} on Fourier kernels and its
extension to general functionals. Numerical results of \NetName{} and
\INetName{} are presented in Section~\ref{sec:experiment} including
the performance comparison of network architectures, initializations,
and datasets. Finally, in Section~\ref{sec:conclusion}, we conclude
the paper together with discussion on future directions.

\section{Preliminaries}
\label{sec:preliminaries}

This section paves the path to \NetName{}. We first derives a
low-rank interpolation of the Fourier kernel, which is crucial to
the efficiency of {\it Butterfly} scheme and \NetName{}. Then in
Section~\ref{sec:forwardalgo}, we reviews the {\it Butterfly} algorithm
for Fourier kernel. Readers, who are familiar with butterfly scheme,
should be safe to skip it.

\subsection{Low-rank Approximation of Fourier Kernel}
\label{sec:lowrank}

Fourier kernel throughout this paper is defined as,
\begin{equation} \label{eq:fourierkernel}
    \calK(\xi,t) = \Kfun{\xi \cdot t}, \qquad \xi \in [K_0, K_0+K),
    \quad t \in [0,1),
\end{equation}
where $[K_0, K_0+K)$ denotes the frequency window of interests, $K_0$
denotes the starting frequency, and $K$ denotes the frequency window
width. It is well-known that the discrete Fourier transform (DFT)
matrix, i.e., uniform discretization of \eqref{eq:fourierkernel}
with proper scaling, has orthonormal rows and columns. Hence, the
DFT matrix is a unitary matrix of full rank and the Fourier kernel
is also full rank. Theorem~\ref{thm:low-rank} show when the Fourier
kernel is restricted to certain pairs of subdomains of $[0,1)$ and
$[K_0, K_0+K)$, it has low-rank property.

We first give a brief introduction of the Chebyshev interpolation
with $r$ points. The Chebyshev grid of order $r$ on $[-\frac{1}{2},
\frac{1}{2}]$ is defined as,
\begin{equation}
    \left\{ z_i = \frac{1}{2} \cos \left( \frac{ (i-1) \pi}{ r } \right)
    \right\}_{i=1}^r.
\end{equation}
The Chebyshev interpolation of a function $f(x)$ on $[-\frac{1}{2},
\frac{1}{2}]$ is defined as,
\begin{equation}
    \Pi_r f(x) = \sum_{k=1}^r f(z_k) \calL_k(x),
\end{equation}
where $\calL_k(x)$ is the Lagrange polynomial as,
\begin{equation} \label{eq:LagrangePoly}
    \calL_{k}(x) = \prod_{p \neq k}
    \frac{x-z_p}{z_{k}-z_p}.
\end{equation}

Several earlier works~\cite{Candes2007, Candes2009, Li2015b} proved the
Chebyshev interpolation representation for Fourier integral operators,
which are generalized Fourier kernel. Theorem~\ref{thm:low-rank}
is a special case of these earlier work but with more precise and
explicit estimation on the prefactor.

\begin{theorem} \label{thm:low-rank}
    Let $r$ be the number of Chebyshev points, $B \subset [0,1)$
    and $A \subset [K_0, K_0+K)$ denote a domain pair such that
    $w(A) w(B) \leq \frac{r}{\pi e}$, where $w(\cdot)$ is the domain
    length function.  Then there exists low-rank representations of
    the Fourier kernel restricted to the domain pair,
    \begin{equation} \label{eq:low-rank1}
        \sup_{\xi \in A, t \in B} \abs{\,\Kfun{\xi \cdot t} - \sum_{k
        = 1}^{r}\Kfun{\xi \cdot t_k} \Kfun{\xi_0 \cdot \left( t -
        t_k \right)} \calL_k(t)\,} \leq \left( 2 + \frac{2}{\pi}
        \ln r \right) \left( \frac{ \pi e w(A) w(B) }{2r} \right)^r,
    \end{equation}
    and 
    \begin{equation} \label{eq:low-rank2}
        \sup_{\xi \in A, t\in B} \abs{\,\Kfun{\xi \cdot t} - \sum_{k =
        1}^{r} \Kfun{\left( \xi - \xi_k \right) \cdot t_0} \calL_k(\xi)
        \Kfun{\xi_k \cdot t}\,} \leq \left( 2 + \frac{2}{\pi} \ln r
        \right) \left( \frac{ \pi e w(A) w(B) }{2r} \right)^r,
    \end{equation}
    where $\xi_0$ and $t_0$ are the centers of $A$ and $B$, and $\xi_k$
    and $t_k$ are Chebyshev points on $A$ and $B$.
\end{theorem}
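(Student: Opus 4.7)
The plan is to reduce both displayed inequalities to a single question: the Chebyshev interpolation error of a complex exponential $e^{-2\pi\imath\beta s}$ on an interval, whose frequency parameter $|\beta|$ is controlled by the product $w(A)w(B)$. For \eqref{eq:low-rank1}, I would first rewrite the candidate sum as
\[
\sum_{k=1}^{r}\Kfun{\xi\cdot t_k}\,\Kfun{\xi_0\cdot(t-t_k)}\,\calL_k(t)
\;=\;\Kfun{\xi_0\cdot t}\,\sum_{k=1}^{r}\Kfun{(\xi-\xi_0)\cdot t_k}\,\calL_k(t),
\]
so that, after dividing by the unimodular factor $e^{-2\pi\imath\xi_0 t}$, the left-hand side of \eqref{eq:low-rank1} equals $|g_\xi(t)-(\Pi_r g_\xi)(t)|$ pointwise in $(\xi,t)$, where $g_\xi(t):=e^{-2\pi\imath(\xi-\xi_0)t}$ (with $\xi$ treated as a parameter) and $\Pi_r$ is Chebyshev interpolation on $B$ at the nodes $\{t_k\}$. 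An analogous expansion for \eqref{eq:low-rank2} factors out $e^{-2\pi\imath\xi t_0}$ and reduces that bound to Chebyshev interpolation on $A$ of $h_t(\xi):=e^{-2\pi\imath(t-t_0)\xi}$. The two cases are therefore interchangeable under $A\leftrightarrow B$, and it suffices to analyze one of them.

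For the reduced problem I would invoke the classical Lebesgue-constant estimate
\[
\|g-\Pi_r g\|_\infty \;\le\; (1+\Lambda_r)\,E_{r-1}(g),\qquad \Lambda_r\le\LebConst,
\]
where $E_{r-1}(g)$ denotes the best uniform approximation of $g$ by polynomials of degree at most $r-1$. This already contributes the prefactor $2+\frac{2}{\pi}\ln r$ appearing in the theorem. To bound $E_{r-1}(g_\xi)$, I would compare $g_\xi$ to its degree-$(r-1)$ Taylor polynomial about the center $t_0$ of $B$. Rescaling $t=t_0+w(B)s$ with $s\in[-\tfrac12,\tfrac12]$ reduces the problem to Taylor expansion of $\tilde g(s)=e^{-2\pi\imath\beta s}$, where $|\beta|=|\xi-\xi_0|\,w(B)\le \tfrac12 w(A)w(B)$ since $\xi\in A$. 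The integral form of the remainder then yields
\[
|\tilde g(s)-P_{r-1}(s)| \;\le\; \frac{(2\pi|\beta|)^r|s|^r}{r!}\;\le\;\frac{(\pi w(A)w(B))^r}{2^r\,r!}.
\]

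The final step is purely algebraic: applying Stirling's bound $r!\ge(r/e)^r$ converts the right-hand side into $\bigl(\pi e\,w(A)w(B)/(2r)\bigr)^r$, matching the stated exponent exactly. The hypothesis $w(A)w(B)\le r/(\pi e)$ is not needed to carry out the derivation, but ensures the base is bounded by $1/2$, so that the estimate represents genuine exponential decay in $r$. The inequality \eqref{eq:low-rank2} then follows verbatim after swapping the roles of $A$ and $B$. I expect the main subtlety to be bookkeeping of constants: the additive $2$ in the prefactor must come entirely from the $+1$ in the Lebesgue-constant bound combined with the $+1$ passing from best approximation to interpolation, with no hidden slack absorbing the $|s|^r\le 2^{-r}$ factor in the Taylor remainder, since that factor is precisely what supplies the denominator $2r$ (rather than $r$) inside the exponent.
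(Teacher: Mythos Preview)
Your proposal is correct and follows essentially the same route as the paper: factor out a unimodular phase to reduce to Chebyshev interpolation of $e^{-2\pi\imath(\xi-\xi_0)(t-t_0)}$, invoke the near-best-approximation bound $(1+\Lambda_r)E_{r-1}$ with $\Lambda_r\le\frac{2}{\pi}\ln r+1$, and control $E_{r-1}$ by a Taylor estimate combined with Stirling. The only cosmetic difference is that the paper sums the full power-series tail (and uses the hypothesis $w(A)w(B)\le r/(\pi e)$ to bound that sum), whereas your Lagrange-remainder version produces the single-term bound $\frac{(\pi w(A)w(B))^r}{2^r r!}$ directly and so, as you noted, does not actually need the hypothesis for the inequality to hold.
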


The proof of Theorem~\ref{thm:low-rank} is deferred to 
Appendix~\ref{app:low-rank}. This property is also known as
complementary low-rank property~\cite{Li2015a, Li2015b}.

\subsection{Butterfly Algorithm for Fourier Kernel}
\label{sec:forwardalgo}

This section briefly describes the {\it Butterfly} algorithm tailored
for Fourier kernel based on Theorem~\ref{thm:low-rank}. Given
a function $x(t)$ discretized on a uniform grid, $\{t_q =
\frac{q-1}{N}\}_{q=1}^N$, the goal is to compute the discrete Fourier
transform $\{\hat{x}(\xi_p), \xi_p = K_0, \dots, K_0+K-1 \}$ defined by
\begin{equation}
    \hat{x}(\xi_p) = \sum_{t_q} \Kfun{\xi_p t_q} x(t_q), \quad \xi_p =
    K_0, \dots, K_0+K-1.
\end{equation}
Without loss of generality, we assume $N \geq K$ throughout this paper.

{\bf Hierarchical domain partition.} In order to benefit from
Theorem~\ref{thm:low-rank}, we first define the $L$ layer hierarchical
partition of $[0,1)$ for $L \leq \log N$.~\footnote{Without further
explanation, $\log$ denotes logarithm base $2$ and $\ln$ denotes
natural logarithm base $e$.} Extension to $L > \log N$ is possible but
not common in butterfly scheme. Let $B^{0}_0 = [0,1)$ be the domain
on layer $0$. On layer $1$, the domain $B^0_0$ is evenly partitioned
into $B^1_0 =[0,\frac{1}{2})$ and $B^1_1=[\frac{1}{2},1)$. We conduct
the partition recursively, i.e., $B^{\ell-1}_j$ is evenly partitioned
into $B^{\ell}_{2j}$ and $B^{\ell}_{2j+1}$. In the end, the partition
on layer $\ell$ is $\{B^{\ell}_j, j = 0, \dots, 2^{\ell}-1\}$ and
each $B^{\ell}_j = [\frac{j}{2^\ell}, \frac{j+1}{2^\ell})$ is of
length $2^{-\ell}$.

Before partitioning the frequency domain, we introduce two more
notations, $L_t$ and $L_\xi$, which split the $L$ layers into two
group, i.e.,
\begin{equation}
    L = L_t + L_\xi.
\end{equation}
And $L_\xi$ satisfies the constraint $0 \leq L_\xi \leq \log K$.

The partitioning of the frequency domain is described as follows. For
layers $\ell \leq L_{\min} = \min(L_t, \log K - L_\xi)$, we partition
the frequency domain in the same way as for the time domain. Hence the
partition is $\{A^\ell_i, i=0, \dots, 2^\ell-1\}$ and each $A^\ell_i =
[K_0 + \frac{i}{2^\ell}K, K_0 + \frac{i+1}{2^\ell} K)$ is of length
$K\cdot 2^{-\ell}$. For layers $L_{\min} < \ell \leq L_t$, the
partition remains the same as that on layer $L_{\min}$. The partition
is $\{A^\ell_i, i=0, \dots, 2^{L_{\min}}-1\}$ and each $A^\ell_i =
[K_0 + \frac{i}{2^{L_{\min}}}K, K_0 + \frac{i+1}{2^{L_{\min}}}
K)$ is of length $K\cdot 2^{-L_{\min}}$. For the rest layers $L_t <
\ell \leq L$, the hierarchical bi-partition is applied again starting
from domains on layer $L_{\min}$. The partition is $\{A^\ell_i, i=0,
\dots, 2^{\ell - L_t + L_{\min}}-1\}$ and each $A^\ell_i = [K_0 +
\frac{i}{2^{\ell - L_t + L_{\min}}}K, K_0 + \frac{i+1}{2^{\ell -
L_t + L_{\min}}} K)$ is of length $K\cdot 2^{- \ell + L_t - L_{\min}}$.

\begin{table}[htp]
    \centering
    \begin{tabular}{lccccccc}
        \toprule
        & \multicolumn{3}{c}{Frequency} & \multicolumn{3}{c}{Time} \\
        \cmidrule(lr){2-4}
        \cmidrule(lr){5-7}
        $\ell$ range & domain & $\sharp \{i\}$ & $w(A)$
        & domain & $\sharp \{j\}$ & $w(B)$ & $w(A) w(B)$ \\
        \toprule
        $0 \leq \ell \leq L_{\min}$ &
        $A^\ell_i$ & $2^\ell$ & $K\cdot 2^{-\ell}$ &
        $B^{L-\ell}_j$ & $2^{L-\ell}$ & $2^{\ell - L}$ &
        $K \cdot 2^{-L}$ \\
        $L_{\min} < \ell \leq L_t$ &
        $A^\ell_i$ & $2^{L_{\min}}$ & $K\cdot 2^{-L_{\min}}$ &
        $B^{L-\ell}_j$ & $2^{L-\ell}$ & $2^{\ell - L}$ &
        $K \cdot 2^{\ell - L_{\min} - L}$ \\
        $L_t < \ell \leq L$ &
        $A^\ell_i$ & $2^{\ell - L_t + L_{\min}}$ & $K\cdot 2^{-\ell +
        L_t - L_{\min}}$ &
        $B^{L-\ell}_j$ & $2^{L-\ell}$ & $2^{\ell - L}$ &
        $K \cdot 2^{- L_\xi - L_{\min}}$ \\
        \bottomrule
    \end{tabular}
    \caption{Complementary domain pairs used in \NetName{} on
    all layers. $\sharp \{i\}$ and $\sharp \{j\}$ are lengths
    of index $i$ and $j$, $w(\cdot)$ is the domain length
    function, and $L_{\min} = \min(L_t, \log K - L_\xi)$. }
    \label{tab:complementary-domain-pairs}
\end{table}

Table~\ref{tab:complementary-domain-pairs} lists all domain pairs used
in {\it Butterfly} algorithm and \NetName{}, which is important to
the later complexity analysis. Theorem~\ref{thm:low-rank} is applied
to all these domain pairs in our later analysis.  Notice that when $L
\leq \log K$, we have $L_{\min} = L_t$ and the second range of $\ell$
in Table~\ref{tab:complementary-domain-pairs} is empty.

{\bf Butterfly algorithm.} For properly chosen $r$,
the Fourier kernel restricted to each domain pair in
Table~\ref{tab:complementary-domain-pairs} admits the low-rank
representation and hence the submatrix $\{\Kfun{\xi_p t_q}\}_{\xi_p
\in A^\ell_i, t_q \in B^{L-\ell}_j}$ is approximately of rank $r$.
An explicit formula for the low-rank approximation is given by
a discrete version of Theorem~\ref{thm:low-rank}. Then the {\it
Butterfly} algorithm for Fourier kernel can be described layer by
layer as follows.

\begin{enumerate}

    \item{\emph{Interpolation $(\ell = 0)$.}} For $A = A^0_0$
    and each subdomain $B = B^L_j$, conduct a coefficient transference
    from uniform grid in $B$ to Chebyshev points in $B$ and denote
    the transferred coefficients as $\{ \coef{AB}{k}\}_{1\le k \le r}$,
    i.e.,
    \begin{equation} \label{eq:sum1}
        \coef{AB}{k} = \sum_{t \in B} 
        \Kfun{\xi_0 \cdot \left( t - t_k \right)}
        \calL_k(t)
        x(t), \quad 1\le k
        \le r,
    \end{equation}
    where $\xi_0$ is the center of $A$ and $t_k$ denotes the Chebyshev
    point. According to Theorem~\ref{thm:low-rank}, we note that,
    \begin{equation}
        \hat{x}(\xi_p) =
        \sum_{B = \{B^L_j\}}
        \sum_{t_q \in B} \Kfun{\xi_p \cdot t_q} x(t_q)
        \approx
        \sum_{B = \{B^L_j\}}
        \sum_{t_k \in B} \Kfun{\xi_p \cdot t_k}
        \coef{AB}{k}
        \quad \xi_p \in A,
    \end{equation}
    where $t_q$ and $\xi_p$ denote uniform grid points and the
    approximation accuracy is controlled by $r$ and $L$ based on
    \eqref{eq:low-rank1}.

    \item{\emph{Recursion $(\ell = 1, \dots, L_t)$.}} For each
    domain pair $(A,B) = (A^\ell_i,B^{L-\ell}_j)$, construct the
    transferred coefficients $\{\coef{AB}{k}\}_{1 \le k \le r}$. Let
    $P$ denote the parent of $A$ and $C$ denote a child of $B$ at
    layer $\ell-1$. Throughout, we shall use the notation $C \succ B$
    when $C$ is a child of $B$. At layer $\ell-1$, the coefficients
    $\{\coef{PC}{s}\}_{1\le s \le r}$ satisfy,
    \begin{equation}
        \hat{x}(\xi_p) 
        \approx
        \sum_{C = \{B^{L-\ell+1}_j\}}
        \sum_{t_s \in C} \Kfun{\xi_p \cdot t_s}
        \coef{PC}{s}
        =
        \sum_{B = \{B^{L-\ell}_j\}}
        \sum_{\substack{C \succ B \\ t_s \in C}} \Kfun{\xi_p \cdot t_s}
        \coef{PC}{s}
        \quad \xi_p \in P.
    \end{equation}
    Since $A \subset P$, the above approximation holds for $\xi_p \in
    A$ as well. Now conduct a coefficient transference from Chebyshev
    points in $C \succ B$ to Chebyshev points in $B$ and denote the
    transferred coefficients as $\{ \coef{AB}{k}\}_{1\le k \le r}$, i.e.,
    \begin{equation} \label{eq:sum2}
        \coef{AB}{k} = \sum_{\substack{C \succ B\\ t_s \in C}} 
        \Kfun{\xi_0 \cdot \left( t_s - t_k \right)}
        \calL_k(t_s)
        \coef{PC}{s}, \quad 1\le k
        \le r,
    \end{equation}
    where $\xi_0$ denotes the center of $A$, $t_s$ and $t_k$
    denote the Chebyshev in $C$ and $B$ respectively. According to
    Theorem~\ref{thm:low-rank}, the transferred coefficients admit
    the approximation,
    \begin{equation}
        \hat{x}(\xi_p)
        \approx
        \sum_{B = \{B^{L-\ell}_j\}}
        \sum_{t_k \in B} \Kfun{\xi_p \cdot t_k}
        \coef{AB}{k}
        \quad \xi_p \in A.
    \end{equation}

    \item{\emph{Switch $(\ell = L_t)$.}} For the layer visited $\ell
    \leq L_t$, the Chebyshev interpolation is applied to variable
    $t$, while for layer $\ell > L_t$ the interpolation is applied
    to variable $\xi$ . Hence, we switch the role of $t$ and $\xi$
    at this step. For all pairs $(A,B) = (A^{L_t}_i,B^{L_\xi}_j)$ in
    the last step, $\coef{AB}{s}$ denotes the coefficients obtained
    by Chebyshev interpolation. Let $\{\xi_k^A\}_k$ and $\{t_s^B\}_s$
    denote the Chebyshev points in $A$ and $B$ respectively. Then
    we abuse notation $\coef{AB}{k}$ and define Fourier transformed
    coefficients for $(A,B)$ as,
    \begin{equation} \label{eq:switchsum}
        \coef{AB}{k} := \sum_{s=1}^{r} \Kfun{\xi_k^A \cdot t_s^B}
        \coef{AB}{s} \approx \sum_{t_q \in B} \Kfun{\xi_k^A \cdot
        t_q} x(t_q),
    \end{equation}
    where $t_q$ denotes the original uniform distributed points in $B$
    and the approximation is due to the definition of $\coef{AB}{s}$
    and \eqref{eq:low-rank2}.

    \item{\emph{Recursion $(\ell = L_t + 1, \dots, L_t + L_\xi)$.}} For
    each pair $(A,B) = (A^\ell_i,B^{L-\ell}_j)$, and the corresponding
    parent domain $P$ and child domain $C$ of $A$ and $B$ respectively,
    the coefficients $\{\coef{PC}{s}\}_{1\le s \le r}$ satisfy,
    \begin{equation} \label{eq:condcoef}
        \coef{PC}{s} \approx \sum_{t_q \in C} \Kfun{\xi_s \cdot t_q}
        x(t_q),
    \end{equation}
    where $\xi_s$ denotes the Chebyshev points in $P$. Given the second
    approximation in Theorem~\ref{thm:low-rank}, we have, with notation
    $\xi_p$ and $\xi_k$ being uniform points and Chebyshev points in $A$
    respectively,
    \begin{equation} \label{eq:hatf-split}
        \begin{split}
            \hat{x}(\xi_p) 
            & = 
            \sum_{B = \{B^{L-\ell}_j\}}
            \sum_{\substack{C \succ B \\ t_q \in C}}
            \Kfun{\xi_p \cdot t_q} x(t_q) \\
            & \approx 
            \sum_{B = \{B^{L-\ell}_j\}}
            \sum_{\substack{C \succ B \\ t_q \in C}}
            \sum_{\xi_k \in A}
            \Kfun{\left( \xi_p - \xi_k \right) \cdot t_0^C} \calL_k(\xi_p)
            \Kfun{\xi_k \cdot t_q} x(t_q) \\
            & \approx 
            \sum_{B = \{B^{L-\ell}_j\}}
            \sum_{\substack{C \succ B \\ t_q \in C}}
            \sum_{\xi_k \in A}
            \Kfun{\left( \xi_p - \xi_k \right) \cdot t_0^C} \calL_k(\xi_p)
            \left( \sum_{\xi_s \in P}
            \Kfun{\left( \xi_k - \xi_s \right) \cdot t_0^C} \calL_s(\xi_k)
            \Kfun{\xi_s \cdot t_q} \right) x(t_q) \\
            & \approx 
            \sum_{B = \{B^{L-\ell}_j\}}
            \sum_{\xi_k \in A}
            \Kfun{\left( \xi_p - \xi_k \right) \cdot t_0^C} \calL_k(\xi_p)
            \left( \sum_{\xi_s \in P}
            \sum_{C \succ B}
            \Kfun{\left( \xi_k - \xi_s \right) \cdot t_0^C} \calL_s(\xi_k)
            \coef{PC}{s} \right), \\
        \end{split}
    \end{equation}
    where $t_0^C$ denotes the center of $C$, the first and second
    approximation are due to \eqref{eq:low-rank2} and the last
    approximation comes from the definition of $\coef{PC}{s}$. The
    summations in the bracket in the last line of \eqref{eq:hatf-split}
    defines a transference between coefficients. Hence, $\coef{AB}{k}$
    defined as 
    \begin{equation} \label{eq:sum3}
        \coef{AB}{k} = \sum_{\xi_s \in P}
        \sum_{C \succ B}
        \Kfun{\left( \xi_k - \xi_s \right) \cdot t_0^C} \calL_s(\xi_k)
        \coef{PC}{s},
    \end{equation}
    naturally satisfies \eqref{eq:condcoef}.

    \item{\emph{Interpolation $(\ell = L)$.}} Finally, $\ell =
    L$, for $B = B^0_0$ and each $A = A^L_i$, we approximate the
    $\hat{x}(\xi_p)$ for $\xi_p \in A$ as
    \begin{equation} \label{eq:sum4}
        \begin{split}
            \hat{x}(\xi_p) 
            & = 
            \sum_{t_q \in B}
            \Kfun{\xi_p \cdot t_q} x(t_q) \\
            & \approx
            \sum_{t_q \in B}
            \sum_{\xi_k \in A}
            \Kfun{\left( \xi_p - \xi_k \right) \cdot t_0} \calL_k(\xi_p)
            \Kfun{\xi_k \cdot t_q} x(t_q)
            \approx
            \sum_{\xi_k \in A}
            \Kfun{\left( \xi_p - \xi_k \right) \cdot t_0} \calL_k(\xi_p)
            \coef{AB}{k}.
        \end{split}
    \end{equation}

\end{enumerate}

We notice that summation kernels in \eqref{eq:sum1} and \eqref{eq:sum2}
relies only on the relative distance of $t$s and, hence, it can be viewed
as a convolution kernel. Similarly summation kernels in \eqref{eq:sum3}
and \eqref{eq:sum4} relies only on the relative distance of $\xi$s,
which can be viewed as convolution kernels as well. Such an observation
plays important role in the design of \NetName{}.

\section{\NetName{}}
\label{sec:bnet}

\NetName{} is a novel structured CNN which requires far less
number of parameters to accurately represent functions that can
be expressed in the frequency domain. The essential building
block of \NetName{} is the interpolation convolutional layer,
illustrated in Figure~\ref{fig:interpCNN} and described
in Section~\ref{sec:interpCNN}, which by itself is also
interesting as it gives another way of interpreting channel
mixing. Section~\ref{sec:butterflyNet} assembles interpolation
convolutional layers together and proposes the \NetName{}. A complexity
analysis is carefully derived in Section~\ref{sec:complexity}. Finally,
in Section~\ref{sec:bnetMatrix}, we provide the matrix representation
of the \NetName{}, which significantly simplifies the analysis in
Section~\ref{sec:analysis}.

\subsection{Interpolation Convolutional Layer}
\label{sec:interpCNN}

To introduce the interpolation convolutional operation, we first
introduce an equivalent formula of the usual convolutional layer
by ``channel unfolding'', and then illustrate the layer through a
hierarchical interpolation example.

\begin{figure}[htp]
\vskip -0.2 in  
    \centering
    \begin{subfigure}[b]{0.48\textwidth}
        \centering
        \includegraphics[width=0.6\linewidth]{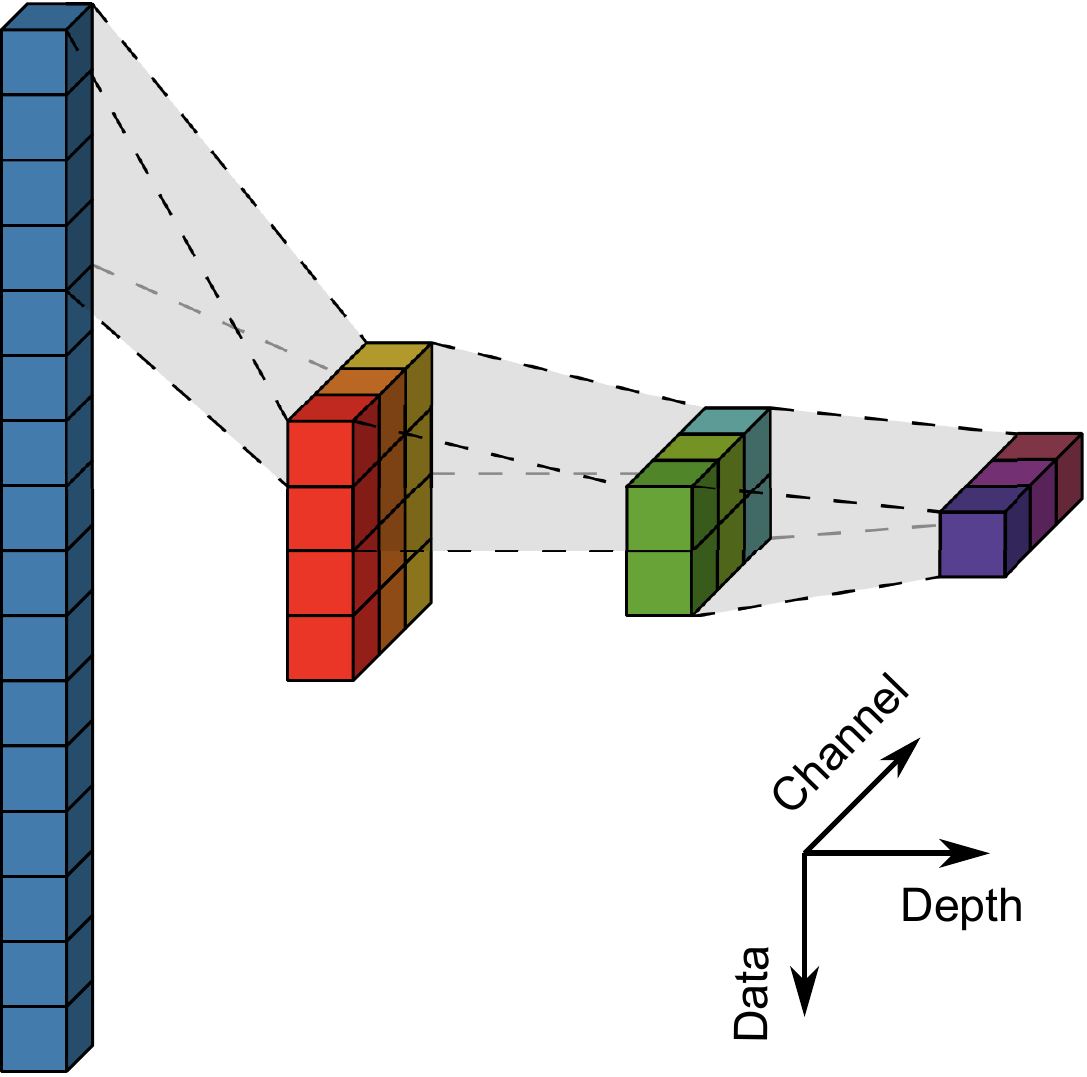}
        \caption{1D interpolation convolutional layers with folded
        channel representation}
    \end{subfigure}
    \quad
    \begin{subfigure}[b]{0.48\textwidth}
        \centering
        \includegraphics[width=0.6\linewidth]{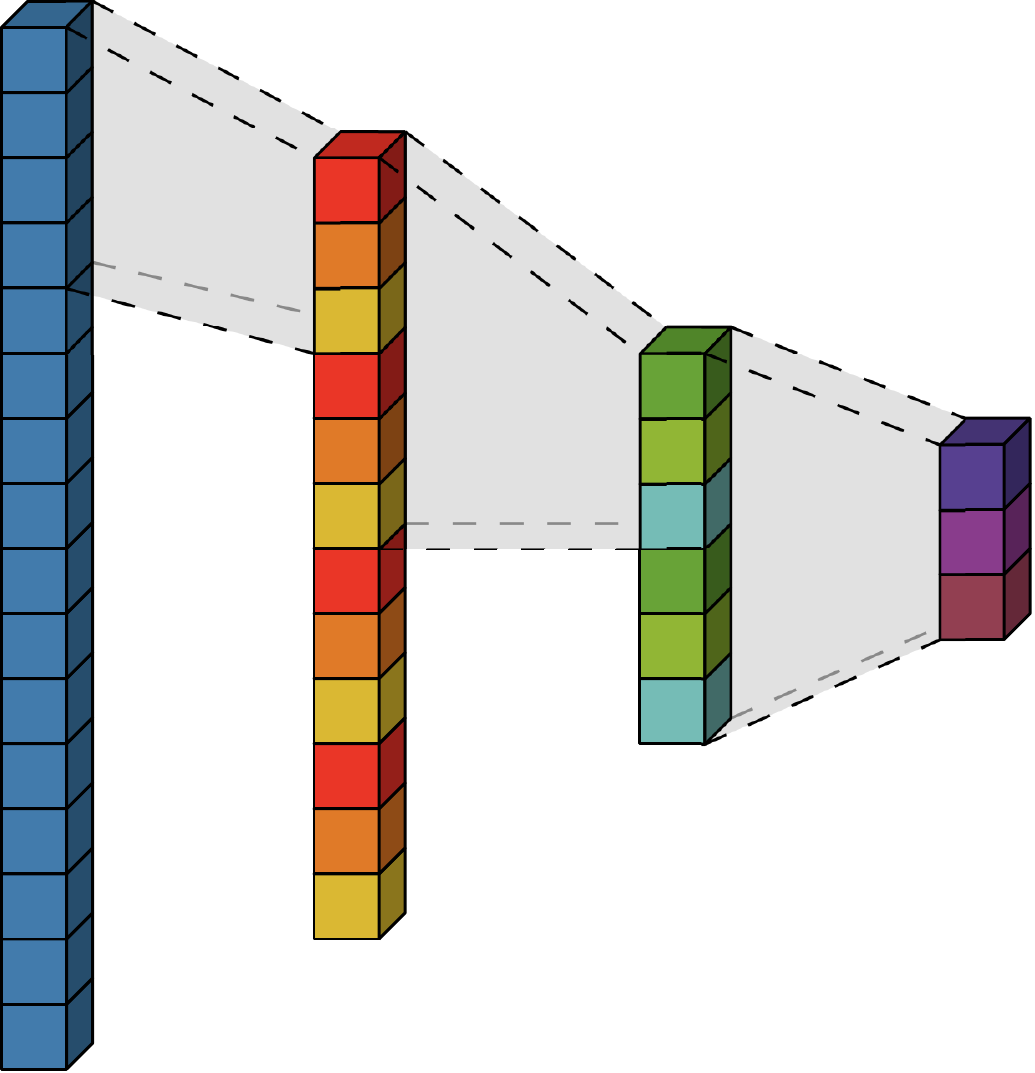}
        \caption{1D interpolation convolutional layers with unfolded
        channel representation}
    \end{subfigure}
    \caption{1D interpolation convolutional layers with input size
    16. The first layer is a 1D conv with filter size and stride size
    both being 4 and contains 3 output channels. All later layers are
    1D convs with filter size and stride size being 2 and the input
    and output channels sizes are 3.} \label{fig:interpCNN}
\end{figure}

Let $\{x(i,k_1) \mid i = 0, \dots, n-1; k_1 = 1, \dots, c_1\}$ be a
general input data with length $n$ and $c_1$ channels. Assume the
1D convolutional layer maps $c_1$ input channels to $c_2$ output
channels and the convolution filter is of size $w$. The parameters,
then, can be written as $W_{k_2,i,k_1}$ for $i = 0, \dots, w-1$,
$k_1 = 1, \dots, c_1$, and $k_2 = 1, \dots, c_2$. The output of the
convolutional layer, under these notations, is written as
\begin{equation} \label{eq:interpChannel}
    y(j,k_2) = \sum_{ \substack{ 0 \leq i \leq w-1 \\
    1 \leq k_1 \leq c_1 }} W_{k_2,i,k_1} x(i+s(j-1),k_1),
\end{equation}
where $s \geq 1$ is the stride size and $j = 0, \dots, (n-w)/s$
denotes the data index of output. In many cases, it is more convenient
to unfold the channel index into a vector as the input data, i.e.,
$x\left[i c_1+k_1\right] = x(i,k_1)$, $y\left[j c_2+k_2\right]
= y(j,k_2)$, and $W[k_2,i w + k_1] = W_{k_2,i,k_1}$. Hence
\eqref{eq:interpChannel} can be represented as the matrix vector product,
\begin{equation} \label{eq:matInterpCNN}
    y(j,:) = y[j c_2+(1:c_2)] = W x[s c_1 j+(1:wc_1)],
\end{equation}
where Matlab notation is adopted. Without considering the weight sharing
of bias term in the convolution layer, all channel direction can be
unfolded into the data dimension and the convolution is modified as a
block convolution. Such an unfolded convolutional layer will be called
the interpolation convolutional layer. Interpolation convolutional layer
is a way to understand the relation between channel dimension and data
dimension, while, in practice, it is still implemented through regular
convolutional layer.

The representation of interpolation convolutional layer is motivated by
the observation that function interpolation (coefficient transference
in {\it Butterfly} algorithm, e.g., \eqref{eq:sum1}, \eqref{eq:sum2},
\eqref{eq:sum3} and \eqref{eq:sum4}) can be naturally represented as a
multi-channel convolution. In this setting, unfolding channels is more
natural. Let $B_0, B_1, \dots, B_{J-1}$ be a equal spaced partition
of $[0,1)$, i.e., $B_j = [\nicefrac{j}{J}, \nicefrac{j+1}{J})$, and
$t_{k_1}^{B_j}$ denote the $k_1$-th discretization point in $B_j$
for $k_1 = 1, \dots, c_1$. We further assume that the locations of
$t_{k_1}^{B_j}$ relative to $B_j$ are the same for all $j$. The
input data is viewed as the function $x(t)$ evaluated at the points
$t_{k_1}^{B_j}$, i.e., $x(j,k_1) = x(t_{k_1}^{B_j})$. Let $s_{k_2}^{B_j}$
be the interpolation points on $B_j$ for $k_2 = 1, \dots, c_2$,
with the Lagrange basis polynomial given by
\begin{equation} \label{eq:LagrangePoly2}
    \calL_{k_2}(t) = \prod_{\substack{p = 1\\p \neq k_2}}^{c_2}
    \frac{t-s_p^{B_j}}{s_{k_2}^{B_j}-s_p^{B_j}}.
\end{equation}
The interpolated function of $x(t)$ at $s_{k_2}^{B_j}$ is then defined
as,
\begin{equation} \label{eq:sourcetrans}
    \begin{split}
        x(s_{k_2}^{B_j}) \approx y(j,k_2) = \sum_{k_1 = 1}^{c_1}
        \calL_{k_2}(t_{k_1}^{B_j})x(t_{k_1}^{B_j}) & =  \sum_{k_1
        = 1}^{c_1} \prod_{\substack{p = 1\\ p \neq k_2}}^{c_2}
        \frac{t_{k_1}^{B_j}-s_p^{B_j}}{s_{k_2}^{B_j}-s_p^{B_j}}
        x(j,k_1) \\
        & = \sum_{k_1 = 1}^{c_1}
        \prod_{\substack{p = 1\\ p \neq k_2}}^{c_2}
        \frac{t_{k_1}^{B_0}-s_p^{B_0}}{s_{k_2}^{B_0}-s_p^{B_0}}
        x(j,k_1),
    \end{split}
\end{equation}
where $j = 0, \dots, J-1$. The last equality in \eqref{eq:sourcetrans}
is due to the fact that each fraction in \eqref{eq:sourcetrans}
depends only on the relative distance and is thus independent of
$B_j$. Therefore, we could denote $W_{k_2,i,k_1} = \prod_{p \neq k_2}
\nicefrac{(t_{k_1}^{B_0}-s_p^{B_0})}{(s_{k_2}^{B_0}-s_p^{B_0})}$,
and thus the source transfer formula \eqref{eq:sourcetrans} can be
interpreted as convolution \eqref{eq:interpChannel} with the stride
size $s$ being the same as the filter size $w$, i.e., $s=w$. In
this representation, the two channel indices $k_1$ and $k_2$ denote
the original points and interpolation points within each domain
$B_j$. Therefore, unfolding the channel index of both $x$ and $y$
leads to the natural ordering of the index of points on $[0,1)$.

For CNN with multiple convolutional layers, the unfolding of the
channel index could be done recursively. Figure~\ref{fig:interpCNN}
(a) illustrates 1D interpolation convolutional layers, whereas
Figure~\ref{fig:interpCNN} (b) shows its unfolded representation. Gray
zones in both figures indicate the data dependency between
layers. Figure~\ref{fig:interpCNN} (b) can also be understood as a
recursive function interpolation. The domain is first divided into
four subdomains and the first layer interpolates the input function
within each subdomain to its three interpolation points. The layer
afterwards merges two adjacent subdomains into a biger subdomain and
interpolates the function defined on the previous 6 interpolation
points to the new 3 interpolation points on the merged subdomain.

If the assumption $w=s$ is removed, the convolutional layer can still
be understood as an interpolation with overlapping subdomains. Similar
idea is used in Simpson's rule and multi-step methods.

\subsection{\NetName{} Architecture}
\label{sec:butterflyNet}

This section formally introduce \NetName{} architecture. We follow the
exact structure of {\it Butterfly} algorithm here. Parallel reading
of Section~\ref{sec:forwardalgo} and this section is recommended. For
each layer, we introduce the neural network structure followed by
specifying the pre-defined {\it Butterfly} initialization and an
explanation related to the Fourier transform.

Let $x(t)$ be the input data viewed as a signal in time. Time-frequency
analysis usually decomposes the signal into different modes according
to frequency range, e.g., high-, medium-, low-frequency modes. Most
importantly, once the signal is decomposed into different modes,
they are analyzed separately and will not be mixed again. This
corresponds to the non-mixing channel idea in \NetName{} and the
non-mixing channel has an explicit correspondence with frequency modes.

We adopt the same notations as in Section~\ref{sec:forwardalgo}:
the input vector is of length $N$ and the output is a feature vector
of length $K$. Let $L$ denote the number of major layers in the
\NetName{}, and $r$ denote the size mixing channels.  Further $L_t$
and $L_\xi$ denote the number of layers before and after the switch
layer with $L = L_t + L_\xi$. We assume $L \leq \log N$ and $L_\xi
\leq \log K$. The input tensor is denoted as $f(t,1)$ for $t = 0,
\dots, N-1$. If {\it Butterfly} initialization is used, we construct
an $L = L_t + L_\xi$ layer hierarchical partition of both domains as
in Table~\ref{tab:complementary-domain-pairs}. Since the input vector,
the output vector, and the weights in {\it Butterfly} algorithm are
of complex value, the connection between complex-valued operations and
real-valued operations are complicated. Under ReLU activation function,
the connection is detailed in Appendix~\ref{app:complex-real}.
Throughout the \NetName{}, a general tensor notation
$\lambda^{(\ell)}(i,j,k)$ is used,
\begin{equation}
    \lambda^{(\ell)}(i,j,k) = \lambda^{A^\ell_i B^{L-\ell}_j}_k,
\end{equation}
corresponding to coefficients in {\it Butterfly} algorithm in
Section~\ref{sec:forwardalgo}. The index $k$ denotes the mixing
channel. The index $i$ and $j$ denote the non-mixing channel and
data dimension before switch layer and denote the data dimension and
non-mixing channel after switch layer. The range of index $\ell$, $i$,
and $j$ can be found in Table~\ref{tab:complementary-domain-pairs}.

Then the \NetName{} architecture is described as follows under
complex-valued operations.

\begin{enumerate}

    \item{\emph{Interpolation $(\ell = 0)$.}} Let $m =
    \nicefrac{N}{2^L}$ denote the filter size, which corresponds to
    the number of points in each $B^L_j$. A 1D convolution layer
    with filter size $m$, stride size $m$ and output channel $r$
    is applied to $x(:,1)$ together with added bias term and ReLU
    activation. The weight tensor is denoted as $W^{(0)}_{k,q,1}$,
    where $k = 1, \dots, r$ and $q = 1, \dots, m$. This layer maps
    the input tensor $x$ to an output tensor $\lambda^{(0)}$.

    Following the notation in \eqref{eq:sum1}, the weight tensor can
    be initialized as,
    \begin{equation}
        W^{(0)}_{k,q,1} \exeq \Kfun{\xi_0 \cdot \left( t_q - t_k \right)}
        \calL_k(t_q), \quad 1\le k \le r \text{ and } 1 \le q \le m,
    \end{equation}
    where $\exeq$ denotes extended assign operator as defined in
    Appendix~\ref{app:complex-real}.

    This step interpolates function from uniform grid points to Chebyshev
    points. When the frequency domain of the input signal is not
    symmetric around origin, this step also extracts extra phase term.

    \item{\emph{Recursion $(\ell = 1, \dots, L_t)$.}} The input
    and output tensors at layer $\ell$ are $\lambda^{(\ell-1)}$ and
    $\lambda^{(\ell)}$. For each of the non-mixing channel at previous
    layer, two (for $\ell \leq L_{\min}$) or one (for $L_{\min} <
    \ell \leq L_t$) 1D convolution layers with filter size $2$,
    stride $2$ and output channel $r$ are applied together with
    bias term and ReLU activation. The weight tensors are denoted as
    and $W^{(\ell),i}_{k,c,s}$, where $k, s = 1, \dots, r$ and $c$
    corresponds to the index of child domain $C$ of $B$.

    Following the notations in \eqref{eq:sum2}, the weight tensor
    can be initialized as,
    \begin{equation}
        W^{(\ell),i}_{k,c,s} \exeq \Kfun{\xi_0^i \cdot \left( t_s^c -
        t_k \right)} \calL_k(t_s^c)
    \end{equation}
    where $\xi^i_0$ denotes the center of $A^\ell_i$, $t_s^c$ denotes
    the Chebyshev points in $C_c = B^{L-\ell+1}_c$ and $t_k$ denotes
    the Chebyshev points in $B^{L-\ell}_0$.

    Each $\xi^i_0$ is the center of $A^\ell_i$ corresponding to
    different frequency domain. Different frequency component
    in the input signal is now organized in different non-mixing
    channels. They will be transformed independently later which
    is related to the orthogonality of basis functions in different
    non-overlapping frequency domains.

    \item{\emph{Switch $(\ell = L_t)$.}} This layer is
    a special layer of local operations. Denote the input
    tensor as $\lambda^{(L_t)}(i,j,s)$ and the dense weights as
    $W^{(L_t),i,j}_{k,s}$ for $k,s = 1, \dots, r$ and $i,j$ with range
    in Table~\ref{tab:complementary-domain-pairs}. For each $i,j$,
    $W^{(L_t)}$ is a $r$ by $r$ dense matrix.  The operation at this
    layer is as follows,
    \begin{equation}
        \lambda^{(L_t)}(i,j,k) = \sum_{s=1}^r
        W^{(L_t),i,j}_{k,s}
        \lambda^{(L_t)}(i,j,s)
    \end{equation}
    for each pair of $i,j$. A bias term and ReLU layer are applied to
    the output tensors.

    Following the notation in \eqref{eq:switchsum}, the dense
    weight tensors can be initialized as,
    \begin{equation}
        W^{(L_t),i,j}_{k,s}
        \exeq \Kfun{\xi_{k}^{A^{L_t}_{i}} \cdot
        t^{B^{L_\xi}_j}_{s}},
    \end{equation}
    where $\xi_{k}^{A^{L_t}_{i}}$ and $t^{B^{L_\xi}_j}_{s}$ are
    Chebyshev points in $A^{L_t}_i$ and $B^{L_\xi}_j$ respectively.

    For each $i,j$, the Fourier operator is applied at this layer.
    Afterwards, interpolation is applied again in frequency domains.

    \item{\emph{Recursion $(\ell = L_t+1, \dots, L_t + L_\xi)$.}} The
    input and output tensors at layer $\ell$ are $\lambda^{(\ell-1)}$
    and $\lambda^{(\ell)}$.  The weight tensors are denoted as
    and $W^{(\ell),j}_{k,c,s}$, where $k, s = 1, \dots, r$ and $c$
    corresponds to the index of child domain $C$ of $B$. For each
    of the non-mixing channel $j$, one 1D convolution layer is
    performed as,
    \begin{equation}
        \lambda^{(\ell)}(i,j,k) = \sum_{s=1}^r \sum_{c=0,1}
        W^{(\ell),j,a}_{k,c,s}
        \lambda^{(\ell-1)}(\floor{i/2},2j+c,s).
    \end{equation}
    where $a = i \bmod 2$. Such a convolution is also known as
    transposed convolution. This transpose property will become more
    clear later when the matrix representation is derived.

    Following the notations in \eqref{eq:sum3}, the weight tensor
    can be initialized as,
    \begin{equation}
        W^{(\ell),j,a}_{k,c,s} \exeq
        \Kfun{\left( \xi_k^a - \xi_s \right) \cdot t_0^{2j+c}}
        \calL_s(\xi_k^a)
    \end{equation}
    where $t^{2j+c}_0$ denotes the center of $C=B^{L-\ell+1}_{2j+c}$,
    $\xi_s$ denotes the Chebyshev points in $A^{\ell-1}_0$, $\xi_k^a$
    denotes the Chebyshev points in $A^{\ell}_a$ for $a=0,1$.

    This part is similar to step 3. Instead of organizing output in
    non-mixing frequency domains, different time component in the input
    signal is now organized in different non-mixing channel. This is
    due to the complementary property between time and frequency.

    \item{\emph{Interpolation $(\ell = L)$.}} Let $m =
    \frac{K}{2^{L_\xi + L_{\min}}}$ denote the output channel size,
    which corresponds to the number of points in each $A^L_i$. A 1D
    convolution layer with filter size $1$, stride size $1$, input
    channel $r$ and output channel $m$ is applied to $\lambda^{(L)}$
    together with added bias term and ReLU activation. The weight
    tensor is denoted as $W^{(L)}_{p,0,k}$, where $p = 1, \dots, m$
    and $k = 1, \dots, r$.

    Following the notations in \eqref{eq:sum4}, the weight tensor
    can be initialized as,
    \begin{equation}
        W^{(L)}_{p,0,k} \exeq \Kfun{\left( \xi_p - \xi_k \right)\cdot
        t_0} \calL_k(\xi_p), \quad 1\le p \le m \text{ and } 1 \le
        k \le r.
    \end{equation}

    This layer generates the output tensor denoted as $\hat{x}(i,p)$,
    for $i = 0, \dots, 2^L-1$ being the index of data and $p = 1,
    \dots, m$ being the index of channel. Reshaping $\hat{x}(s) =
    \hat{x}(i,p)$ for $s = im+p$ gives a single vector output, which
    is analogy of the output vector of the {\it Butterfly} algorithm.

    \item{\emph{Task-dependent layers.}} Any type of layers, e.g.,
    dense layer, convolution layer, transpose convolution layer, etc.,
    can be built on top of $\hat{x}$ and approximate the desired
    task. These layers are creatively designed by users, which are
    not regarded as parts of \NetName{} in the following.

\end{enumerate}

To further facilitate the understanding of the \NetName{},
Figure~\ref{fig:butterflyNet} demonstrates an example of the \NetName{}
with input vector being partitioned into 16 parts. We adopts the unfolded
representation of the mixing channel as in Figure~\ref{fig:interpCNN} (b),
and the channel direction only contains non-mixing channels.

In the above description, the non-mixing channels and mixing channels
are indexed different. If we combine this two indices into a single
channel index, and allow all channel connections to be dense,
we define another family of neural networks, namely \INetName{}.
If the {\it Butterfly} initialization in \NetName{} is applied to
\INetName{} and the rest channel connections are initialized as zero,
then \INetName{} is an identical operator as \NetName{} with {\it
Butterfly} initialization.

Also notice that $L_\xi$ is a tunable parameter for both \NetName{}
and \INetName{}. When $L_\xi = 0$, all transpose convolutional layers
disappear. In this case, the switch layer and interpolation layer
$\ell = L$ can be combined as a entry-wise product operator, which
can be implemented through a dense layer. \INetName{} with $L_\xi =
0$ is then a regular CNN and \NetName{} is a CNN with channel sparse
structure~\cite{Xu2019}. All of our following complexity analysis
and approximation analysis apply to $L_\xi = 0$ with/without merging
switch layer and interpolation layer $\ell = L$.

\subsection{Complexity Analysis}
\label{sec:complexity}

One major advantage of the proposed \NetName{} is the reduction of
model complexity and computational complexity. We now conduct a careful
count on the number of weights. Since we use complex embedding with
ReLU, i.e., ``$\exeq$'', each complex multiplicative weight is actually
implemented by a $4 \times 4$ matrix and each bias is implemented as a
vector of length $4$. Hence $4r$ is the actual number of channels. We
conduct the counting layer by layer based on the network description
and Table~\ref{tab:complementary-domain-pairs}. On the interpolation
($\ell = 0$), there is only one convolutional kernel, which has
$16r \cdot \frac{N}{2^L}$ filter weights and $4r$ bias weights. On
the recursive ($\ell = 1, \dots, L_t$), there are $\min(2^{\ell-1},
2^{L_{\min}})$ non-mixing input channels, $\min(2^\ell, 2^{L_{\min}})$
non-mixing output channels, and the filter size is $2$. The number
of both input and output mixing channels are $4r$. Hence there are
$\min(2^\ell, 2^{L_{\min}}) \cdot (4r)^2 \cdot 2$ filter weights and
$\min(2^\ell, 2^{L_{\min}}) \cdot 4r$ bias weights. On the switch layer
($\ell = L_t$), there are $\min(2^{L_t}, 2^{L_{\min}})$ non-mixing
channels and $2^{L_\xi}$ data. Since the connection is locally fully
connected layer, the overall number of multiplicative weights is
$\min(2^{L_t}, 2^{L_{\min}}) \cdot 2^{L_\xi} (4r)^2$ with additional
$\min(2^{L_t}, 2^{L_{\min}}) \cdot 2^{L_\xi} 4r$ bias weights. On
the recursive layer ($\ell = L_t + 1, \dots, L_t + L_\xi$), it is a
transpose of the previous recursive layer $2L_t-\ell$. The numbers of
filter weights and bias weights are then $2^{L - \ell} \cdot (4r)^2
\cdot 2$ and $2^{L-\ell} \cdot 4r$. The last interpolation ($\ell =
L$) is similar as the interpolation ($\ell = 0$). The numbers of
filter weights and bias weights are $4\max(1,\frac{K}{2^L})(4r)$ and
$4\max(1,\frac{K}{2^L})$.  In summary, the overall network complexity
for \NetName{} is
\begin{align}
     &   \frac{N}{2^L}16r + \sum_{\ell = 1}^{L_t} \min(2^\ell,
        \frac{K}{2^{L_\xi}}) (4r)^2 \cdot 2 + \min(2^{L}, K) (4r)^2 +
        \sum_{\ell = L_t+1}^{L} 2^{L-\ell} (4r)^2 \cdot 2 + \max(1,
        \frac{K}{2^L})16r \nonumber \\
     & ~~~ + 4r + \sum_{\ell = 1}^{L_t} \min(2^\ell, \frac{K}{2^{L_\xi}})
        4r + \min(2^{L}, K) 4r + \sum_{\ell = L_t+1}^{L}
        2^{L-\ell} 4r + \max(1, \frac{K}{2^L})4r \nonumber \\
    &  =  \bigO (\frac{Nr}{2^L} + LKr^2),
     \label{eq:complex-1}
\end{align}
where the first row is the number of multiplicative/filter weights
and the second row is the number of bias weights.

\begin{remark}
    We further derive a more precise upper bound for the number of
    weights in \NetName{} with an absolute constant under $L = \log
    N$. Denote the expression in \eqref{eq:complex-1} as $n_b$. We
    have,
    \begin{equation} \label{eq:bnet-complexity-precise}
        n_b \leq 40r + L_t \frac{K}{2^{L_\xi}} (32r^2+4r) + K
        (16r^2+4r) + 2^{L_\xi} (32r^2+4r) < 90 LKr^2,
    \end{equation}
    where the last inequality adopts the assumption on $r$ as in
    Theorem~\ref{thm:low-rank}.
\end{remark}

The number of parameters in the inflated \NetName{}, i.e., all
channels are mixed, can be derived in a similar way and the overall
complexity is,
\begin{align} 
        & \frac{N}{2^L}16r + \sum_{\ell = 1}^{L_t} \min(2^\ell,
        \frac{K}{2^{L_\xi}})^2 (4r)^2 \cdot 2 + \min(2^{L}, K) (4r)^2 +
        \sum_{\ell = L_t+1}^{L} 2^{2L-2\ell} (4r)^2 \cdot 2 + \max(1,
        \frac{K}{2^L}) 16r \nonumber \\
        &~~~ + 4r + \sum_{\ell = 1}^{L_t} \min(2^\ell, \frac{K}{2^{L_\xi}})
        4r + \min(2^{L}, K) 4r + \sum_{\ell = L_t+1}^{L}
        2^{L-\ell} 4r + \max(1, \frac{K}{2^L}) 4r \nonumber \\
        &= \bigO (\frac{Nr}{2^L} + L\frac{K^2}{2^{2L_\xi}} +
        2^{2L_\xi}).
        \label{eq:complex-2}
\end{align}

\begin{table}[t]
    \centering
    \begin{tabular}{lcccc}
        \toprule
        & \multicolumn{2}{c}{$K \sim N \sim 2^L$}
        & \multicolumn{2}{c}{$K = O(1), N \sim 2^L$} \\
        \cmidrule(lr){2-3} \cmidrule(lr){4-5}
        & \sNetName{} & Inflated \sNetName{} 
        & \sNetName{} & Inflated \sNetName{} \\
        \toprule
        Interpolation ($\ell = 0$)
        & $r$ & $r$ & $r$ & $r$ \\
        Recursion ($\ell \leq L_t$)
        & $2^\ell r^2$ & $2^{2\ell}r^2$ & $\frac{K}{2^{L_\xi}} r^2$ & $\frac{K^2}{2^{2L_\xi}} r^2$ \\
        Switch ($\ell = L_t$)
        & $2^{L} r^2$ & $2^{L} r^2$
        & $K r^2$ & $K r^2$ \\
        Recursion ($\ell \geq L_t + 1$)
        & $2^{L-\ell} r^2$ & $2^{2(L-\ell)}r^2$ & $2^{L-\ell} r^2$ & $2^{2(L-\ell)} r^2$ \\
        Interpolation ($\ell = L$)
        & $r$ & $r$ & $r$ & $r$ \\
        \midrule
        Overall  
        & $2^L r^2$ & $2^{2L_t} r^2$ & $L_t \frac{K}{2^{L_\xi}} r^2 + K
        r^2$ & $L_t \frac{K^2}{2^{2L_\xi}} r^2 + K r^2$ \\
        $L_\xi = 0$
        & $N r^2$ & $N^2 r^2$ & $K r^2 \log N$ & $K^2 r^2 \log N$ \\
        $L_\xi = \frac{1}{2}\log K$
        & $N r^2$ & $N r^2$ & $\sqrt{K} r^2 \log N + K
        r^2$ & $K r^2 \log N$ \\
        \bottomrule
    \end{tabular}
    \caption{Leading order network complexity for \NetName{} and
    \INetName{} under two scenarios, $K \sim N \sim 2^L$ and $K = O(1),
    N\sim 2^L$. In both cases, we assume $K \leq N$ and $L_\xi \leq
    \log K$, i.e. $L - \log K \le L_t \le L$. The asymptotic regime
    is $N\to \infty$, and $r$ denotes the number of interpolation
    points (size of mixing channels) which does not change with $N$.}
    \label{tab:complex}
\end{table}

In Table~\ref{tab:complex}, we summarize the layer-wise complexity
together with the overall complexity for two special scenarios,
$K \sim N \sim 2^L$ and $K = O(1), N\sim 2^L$, for both \NetName{}
and \INetName{}, where we use the notation $A \sim B$ to denote that
$\frac{A}{B} \to c$ which is an absolute strictly positive constant
in the limit of $N \to \infty$. Recall that the number of hiearchical
layer $L$ satisfies that $K \le 2^L$.

In addition to the network complexity, the overall computational
cost for a evaluation of the \NetName{} and the inflated \NetName{}
are $\bigO(N\log N)$ and $\bigO(N\log N + K^2)$.

\subsection{Matrix Representation of \NetName{} }
\label{sec:bnetMatrix}

This section aims to demonstrate the matrix representation
of \NetName{}, which is similar to {\it Butterfly}
factorization~\cite{Li2015a}. The matrix representation further
explain the sparse connectivity of channels and more importantly
facilitates the proof in the analysis of approximation power in
Section~\ref{sec:analysis}.

We first show the matrix representation of the interpolation
convolutional layer and the matrix representation of the \NetName{}
simply stacks the interpolation convolutional layer together
with a switch layer. Figure~\ref{fig:matInterpCNN} (a) represents
\eqref{eq:matInterpCNN} when both $g$ and $f$ are vectorized. If
we permute the row ordering of the matrix, we would result blocks
of convolution matrix with the number of blocks being the size of
the output channels. Figure~\ref{fig:matInterpCNN} (b) assumes
that $s=w$ and the matrix is further simplified to be a block
diagonal matrix. According to the figures, we note that when $s=w$,
the transpose of the matrix is a representation of a transposed
convolutional layer with $W$ replaced by $W^T$.

\begin{figure}
    \centering
    \begin{subfigure}[b]{0.48\textwidth}
        \centering
        \includegraphics[height=0.5\linewidth]{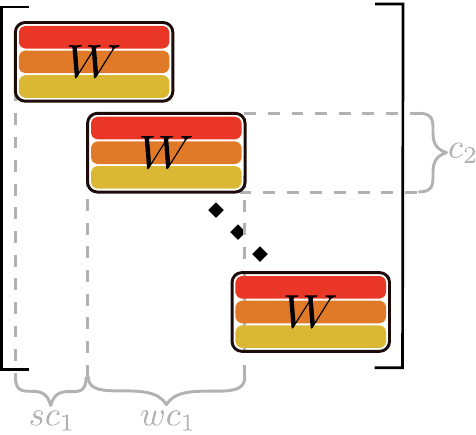}
        \caption{Matrix of 1D interp. conv. layer}
    \end{subfigure}
    ~~
    \begin{subfigure}[b]{0.48\textwidth}
        \centering
        \includegraphics[height=0.5\linewidth]{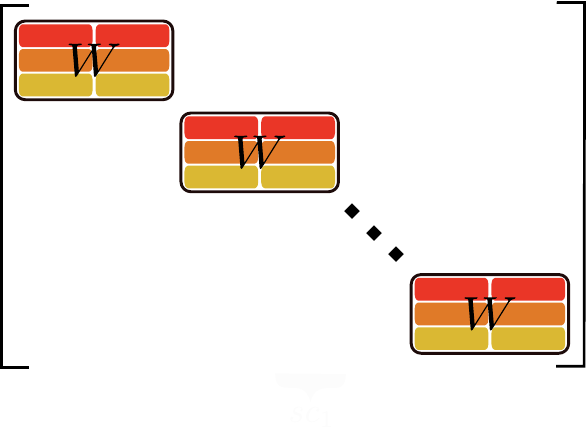}
        \caption{Matrix of 1D interp. conv. layer with $s=w$}
    \end{subfigure}
    \caption{Matrix representation of 1D interpolation convolutional
    layers} \label{fig:matInterpCNN}
\end{figure}

Since the convolutional layer with mixing channel can already be
represented as Figure~\ref{fig:matInterpCNN}, \NetName{} stack
interpolation convolutional layer together representing non-mixing
channel, which is equivalent to stack the matrix row-wise. We would
explain the matrix representation for each step of the \NetName{}.

\begin{enumerate}
    \item \emph{Interpolation $(\ell = 0)$.} The matrix representation
    is Figure~\ref{fig:matInterpCNN} (b) with $2^L$ diagonal blocks
    and each block is $W^{(0)}_{:,:,1}$ which is of size $r \times
    m$. The resulting matrix is denoted as $V$.

    \item \emph{Recursion $(\ell = 1, \dots, L_t)$.} The weight
    tensor $W^{(\ell),i}_{k,c,s}$ can be reshaped as a matrix with row
    indexed by $k$ and column indexed by $c$ and $s$, which is denoted
    as $W^{(\ell),i}$.  For $\ell \leq L_{\min}$ the convolutional
    operator of mixing and non-mixing channels can be viewed as the
    following matrices respectively,
    \begin{equation}
        H^{(\ell)}_{\floor{\nicefrac{i}{2}}}={\small
        \begin{pmatrix}
            W^{(\ell),2\floor{\nicefrac{i}{2}}} & & \\
            &  \ddots &\\
            &  & W^{(\ell),2\floor{\nicefrac{i}{2}}} \\
            \cmidrule(lr){1-3}
            W^{(\ell),2\floor{\nicefrac{i}{2}}+1} & & \\
            &  \ddots &\\
            &  & W^{(\ell),2\floor{\nicefrac{i}{2}}+1} \\
        \end{pmatrix}}, \quad \text{and }
        H^{(\ell)}= {\small
        \begin{pmatrix}
            H^{(\ell)}_{0} & &\\
            & \ddots &\\
            & & H^{(\ell)}_{2^{\ell-1}-1}\\
        \end{pmatrix}}.
    \end{equation}
    For $L_{\min} < \ell \leq L_t$ the convolutional operator can be
    viewed as the following matrices respectively,
    \begin{equation}
        H^{(\ell)}_{i}={\small
        \begin{pmatrix}
            W^{(\ell),i} & & \\
            &  \ddots &\\
            &  & W^{(\ell),i} \\
        \end{pmatrix}}, \quad \text{and }
        H^{(\ell)}= {\small
        \begin{pmatrix}
            H^{(\ell)}_{0} & &\\
            & \ddots &\\
            & & H^{(\ell)}_{2^{L_{\min}}-1}\\
        \end{pmatrix}}.
    \end{equation}

    \item \emph{Switch $(\ell = L_t)$.} The matrix representation of
    switch layer is
    \begin{equation}
        M =
        {\small
        \begin{pmatrix}
            M_{0,0} & \cdots  & M_{0,2^{L_t}-1}\\
            \vdots & \ddots & \vdots \\
            M_{2^{L_\xi}-1,0} & \cdots &
            M_{2^{L_\xi}-1,2^{L_t}-1}
        \end{pmatrix}}.
    \end{equation}
    Each $M_{i,j}$ is again an $2^{L_\xi} \times 2^{L_t}$ block matrix
    with block size $r \times r$, where all blocks are zero except
    that the $(j,i)$ block is $W^{(L_t),i,j}_{:,:}$.

    \item \emph{Recursion $(\ell = L_t+1, \dots, L_t + L_\xi)$.}
    The weight tensor $W^{(\ell),j,a}_{k,c,s}$ can be reshaped as
    a matrix with row indexed by $k,a$ and column indexed by $s$,
    which is denoted as $W^{(\ell),j}_c$. Then the convolutional
    operator of mixing and non-mixing channels can be viewed as the
    following matrices respectively,
    \begin{equation}
        G^{(\ell)}_{j}={\small\left(
        \begin{array}{ccc|ccc}
            W^{(\ell),j}_0 & & & 
            W^{(\ell),j}_1 & & \\
                           &  \ddots & &
            & \ddots &\\
            & & W^{(\ell),j}_0 & 
            & & W^{(\ell),j}_1 \\
        \end{array}
        \right)}, \quad \text{and }
        G^{(\ell)}= {\small
        \begin{pmatrix}
            G^{(\ell)}_{0} & &\\
            & \ddots &\\
            & & G^{(\ell)}_{2^{L-\ell}-1}\\
        \end{pmatrix}}.
    \end{equation}

    \item \emph{Interpolation $(\ell = L)$.} The matrix representation
    is Figure~\ref{fig:matInterpCNN} (b) with $2^L$ diagonal blocks
    and each block is $W^{(L)}_{:,0,:}$ which is of size $\min(1,
    \frac{K}{2^L}) \times r$. The resulting matrix is denoted as $U$.

\end{enumerate}

Based on the matrix representation of each layer of the \NetName{},
we could write down the overall matrix representation together with
bias terms and ReLU activation layer as,
\begin{equation} \label{eq:butterflyNet}
    y = \calB(x) = U \tilde{\sigma}\left[ G^{(L)} \tilde{\sigma}\left[
    \cdots G^{(L_t+1)} \tilde{\sigma}\left[ M \tilde{\sigma}\left[
    H^{(L_t)} \tilde{\sigma}\left[ \cdots H^{(1)} \tilde{\sigma}\left[
    V x \right] \right] \right] \right] \right] \right],
\end{equation}
where $U, G^{(\ell)}, M, H^{(\ell)}, V$ are defined as above and
$\tilde{\sigma}[\cdot]$ denotes the operation of properly adding the
bias and applying the ReLU activation.

\section{Analysis of Approximation Power}
\label{sec:analysis}

The main result of function approximation using \NetName{} is
given in Section~\ref{sec:approx-main}, where the main Theorem,
Theorem \ref{thm:approx_bnet}, relies on the Fourier kernel
approximation result Corollary~\ref{cor:bnet-para}.  The latter is
established in Section~\ref{sec:approx-fourier}, and the proof is
in Section~\ref{sec:proof}.

\subsection{Approximation of Network Function}
\label{sec:approx-main}

In this section, we first focus on approximating single-valued
function $f(\vec{x})$, where $\vec{x} \in \mathbb{R}^N$, $N$ is an
integer.  After showing the
main result Theorem \ref{thm:approx_bnet}, we discuss on the extension
to other prediction functions, particularly dense predictions in U-net.

Let $F_N$ be the $N$-by-$N$ unitary discrete Fourier matrix. Any vector
$\vec{x}$ and its discrete Fourier transform $\hat{x} = F_N \vec{x}$
obey $\norm{ \vec{x}}_2 = \norm{ \hat{x}}_2$.  By normalizing argument,
we assume that $\vec{x}$ lies in $\calX$, which is a subset of the
2-norm unit ball in $\bbR^N$. As a result, $\hat{x} \in \hat{\calX}$
is also contained in the 2-norm unit ball in $\bbC^N$. Suppose that
$\vec{x}$ is statistically distributed according to $dP(x)$ on $\calX$.
We denote the 2-norm ball of radius $r$ in $\bbC^m$ as $\hat{B}_r^m$,
and that in $\bbR^m$ as $B_r^m$.

We introduce the following assumption on $f$ and $\calX$ which,
as shown in Theorem \ref{thm:approx_bnet}, can be more efficiently
represented using \NetName{}.

\begin{assumption} \label{assump:f}
    Consider approximating $f$ under the $p$-norm of $L^p(\calX,
    dP)$, $ p \in [1,\infty]$.  There exists constants $C_2 > 0$,
    $s > 0$, an interval $I_K = [K_0, K_0+K)$ for $K \leq N$, and a
    function $g_K$ which maps from $\hat{B}_{1.1}^K $ to $\mathbb{R}$,
    such that the following three conditions hold.
    
    \begin{enumerate}[(i)]
        \item Let $\hat{x}|_{I_K}$ denote retrieving entries of the
        vector $\hat{x}$ with index in $I_k$,
        \begin{equation*}
            \| f(\vec{x}) - g_K(\hat{x}|_{I_K}) \|_{L^p(\calX, dP),
            \vec{x} \in \calX} < 0.1,
        \end{equation*}
        and note $\hat{x}|_{I_K} \in \hat{B}_1^K$ since $\hat{x}
        \in \hat{B}_1^N$.
    
        \item $g_K$ is $C_2$-Lipschitz on $\hat{B}_{1.1}^K$ with
        respect to the 2-norm, i.e.,
        \begin{equation*}
            |g_K( z ) - g_K(z')| \le  C_2 \|z - z'\|_2,
            \quad \forall z,~z' \in \hat{B}_{1.1}^K.
        \end{equation*}
        Note that if we view $g_K$ as a real-input function taking $2K$
        inputs, it also has Lipschitz constant $C_2$ (with respect to
        the 2-norm in real space). Here $C_2$ is uniformly for all $K$.
    
        \item For any $ 0 < \varepsilon < 1$, there is a multi-layer
        fully-connected network with number of parameters
        \begin{equation}\label{eq:nfc-bound}
            n_{fc} \le c(s, K) \varepsilon^{-K/s} (\log
            \frac{1}{\varepsilon}+1)
        \end{equation}
        which gives a network mapping $\phi_{K}: \hat{B}_{1.1}^K
        \to \mathbb{R}$ s.t. $\| \phi_K(z) -g_K(z)\|_{\infty, z \in
        \hat{B}_{1.1}^K} < \varepsilon $, where $c(s,K)$ is a constant
        that depends on $s$ and $K$.
    \end{enumerate}
\end{assumption}

\begin{remark}
    In Assumption~\ref{assump:f} (i), the constant $0.1$ is technical
    and can be any other constant less than one.  As will be shown in
    Theorem \ref{thm:approx_bnet}, the approximation result is most
    useful when the residual norm in Assumption~\ref{assump:f} (i) can
    be made small, say $\varepsilon \ll 1$, and then this $\varepsilon$
    will be used as the target $\varepsilon$ in both the \NetName{}
    approximation and the fully-connected network approximation.
    Note that if $f$ is band-limited to begin with, the residual
    can be made zero. For frequency decaying function, the residual
    can be made arbitrarily small by increasing $K$. Detailed in the
    examples below.
\end{remark}

{\bf Example 1.} Consider the energy functional of the 1D Laplace
operator with periodic boundary condition,
\begin{equation*}
    f(\vec{x}) = E(\vec{x}) = \sum_{1 \leq k \leq N/2}
    \frac{2}{k^2}|\hat{x}_k|^2, \quad \vec{x} \in \calX = B_{1}^N,
\end{equation*}
and $dP$ is some distribution of $\vec{x}$ on $\calX$.  For any $K >
0$, setting $I_K = [1, K]$, and
\begin{equation*}
    g_K( \hat{x}|_{I_K}) = \sum_{1 \leq \abs{k} \le K}
    \frac{2}{k^2}\abs{\hat{x}_k}^2.
\end{equation*}

\begin{enumerate}[(i)]
    \item We can verify that
    \begin{equation*}
        \| f(\vec{x}) - g_K(\hat{x}|_{I_K}) \|_{L^p(\calX, dP), \vec{x}
        \in \calX} < 2 \cdot K^{-3/2},
    \end{equation*}
    which decays as $K$ increases. This means that the residual can be
    made arbitrarily close to 0 if $K$ can be chosen sufficiently large.

    \item Due to that $g_K$ is a quadratic form, and $\|
    \hat{x}|_{I_K}\|_2 \le 1$, Assumption~\ref{assump:f} (ii) is
    satisfied with $C_2 = 2.2$.

    \item Viewing $g_K$ as a function taking $2K$ real input and
    defined on $B_{1.1}^{2K}$, it is again a quadratic form.  Then by
    setting $s=2$, or even higher positive number, the neural network
    approximation theory in \cite{Yarotsky2017} provides the uniform
    approximation needed in Assumption~\ref{assump:f} (iii) by a
    network whose model complexity is bounded by \eqref{eq:nfc-bound},
    where the constant $c(s,K)$ depends on the space dimension $2K$,
    the derivative order $s$, and the Sobolev norm of the function
    $g_K$ in ${\cal W}^s(B_{1.1}^{2K})$.
\end{enumerate}

{\bf Example 2.} Unlike the first example which makes use of the form
of $f$, this example mainly shifts assumptions on the data set.
Suppose the data domain $\calX$ consists of band-limited data points
with window length $K$,
\begin{equation*}
    \calX = \{ \vec{x} \in B_{1}^{N} \mid \hat{x}_{k} = 0 \text{
    for } k \not\in I_K\}.
\end{equation*}
Then for a regular function $f$ defined on $B_{1.1}^N$, e.g.,
$C_2$-Lipschitz and $s$-smooth, let $\calK^{-1}$ be the discrete
inverse Fourier transform from frequency window $I_K$ to $\calX
\subset \bbR^N$, and $g_K(z) = f( \calK^{-1} z)$ for any $z \in
\hat{B}_{1.1}^K$.  Hence the difference in Assumption~\ref{assump:f}
(i) is zero and the regularity assumptions on $g_K$ can be inherited
from that on $f$. Thus (ii) and (iii) in Assumption~\ref{assump:f}
are also satisfied.

In applications, if the input data vectors $\vec{x}$ are
discretizations of regular continuous signals, then they naturally
have spectrum decay property and can be approximated by band-limited
data points. Hence the Assumption~\ref{assump:f} also applies.

\begin{remark}
    Our theory leaves the abstract approximation bound
    in Assumption~\ref{assump:f} (iii) and focuses on the
    approximation analysis of \NetName{}, of which the key result is
    Corollary~\ref{cor:bnet-para}. Generally, as long as a ``nice''
    component $g_K$, which is band-limited on a Fourier window of
    length $K$ and has sufficient regularity, can be separated out
    from $f$ up to a small residual, the universal approximation
    theory of neural network gives approximation of $g_K$ depending
    on its regularity, and this fully-connected network is used on
    top of the \NetName{}.
\end{remark}

\begin{theorem} \label{thm:approx_bnet}
    Assume that the function $f$ and $\calX$ satisfy
    Assumption~\ref{assump:f}, and notations are the same as therein.
    Then for any $\varepsilon$ satisfying
    \begin{equation*}
        \| f(\vec{x}) - g_K(\hat{x}|_{I_K}) \|_{L^p(\calX, dP),
        \vec{x} \in \calX} \le \varepsilon < 0.1,
    \end{equation*}
    there exists a family of \NetName{}s with $L_\xi =0, \cdots,
    \log K$, whose numbers of parameters are all bounded by
    \begin{equation*}
        n_b \le C K \log N \left( \log \frac{1}{\varepsilon} +
        \log N \right)^2, \quad \text{$C$ being an absolute constant,}
    \end{equation*}
    and a fully connect network $\phi_K$  with number of parameters 
    \begin{equation*}
        n_{fc} \le c(s, K) \varepsilon^{-K/s} (\log
        \frac{1}{\varepsilon}+1),
    \end{equation*}
    such that using any member of the family of \NetName{}, denoting
    by $\calB$, gives
    \begin{equation*}
        \| f(\vec{x}) - \phi_K(\calB (\vec{x}) ) \|_{L^p(\calX, dP),
        \vec{x} \in \calX}  \le \varepsilon (2+ C_2) .
    \end{equation*}
\end{theorem}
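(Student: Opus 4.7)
The plan is to decompose the error via a triangle inequality into three pieces corresponding to the three ingredients in Assumption~\ref{assump:f} together with the Fourier-kernel approximation property of \NetName{}. Writing $\calB$ for the \NetName{} feature map, for each $\vec{x}\in\calX$ I would bound
\begin{equation*}
    |f(\vec{x}) - \phi_K(\calB(\vec{x}))|
    \le |f(\vec{x}) - g_K(\hat{x}|_{I_K})|
    + |g_K(\hat{x}|_{I_K}) - g_K(\calB(\vec{x}))|
    + |g_K(\calB(\vec{x})) - \phi_K(\calB(\vec{x}))|.
\end{equation*}
The first term is $\le \varepsilon$ directly from Assumption~\ref{assump:f}(i). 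The second term will be handled by Lipschitz continuity of $g_K$ (Assumption~\ref{assump:f}(ii)) and the \NetName{} approximation bound of the Fourier kernel (Corollary~\ref{cor:bnet-para}), producing a factor $C_2\|\hat{x}|_{I_K} - \calB(\vec{x})\|_2$. The third term is $\le\varepsilon$ from Assumption~\ref{assump:f}(iii), provided I can verify that $\calB(\vec{x})\in\hat{B}_{1.1}^K$, which is the domain on which $\phi_K$ approximates $g_K$.

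Next I would choose the \NetName{} hyperparameters. Corollary~\ref{cor:bnet-para} (which quantifies Theorem~\ref{thm:low-rank} through the whole butterfly scheme) bounds the operator-norm error of $\calB$ in approximating the discrete Fourier extraction $\vec{x}\mapsto\hat{x}|_{I_K}$ by an expression of the form $(\mathrm{const}\cdot L)(\pi e\, K/(2r N))^{r}\cdot N^{1/2}$ after summing contributions across the $L$ layers of the hierarchical partition. Taking $L=\log N$ and choosing $r$ to be a constant multiple of $\log(1/\varepsilon) + \log N$, this bound can be made $\le\varepsilon$. Plugging this choice of $r$ into the precise complexity bound \eqref{eq:bnet-complexity-precise}, namely $n_b \le 90\,L K r^2$, yields
\begin{equation*}
    n_b \le C K \log N\left(\log\tfrac{1}{\varepsilon} + \log N\right)^2,
\end{equation*}
which is the stated parameter count. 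The same construction works for any $L_\xi\in\{0,1,\ldots,\log K\}$ in the \NetName{} family, since the domain-pair products in Table~\ref{tab:complementary-domain-pairs} are uniformly controlled by $K/2^L$.

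With this choice, $\|\hat{x}|_{I_K} - \calB(\vec{x})\|_2 \le \varepsilon$ whenever $\vec{x}\in B_1^N$, so by the triangle inequality and $\|\hat{x}|_{I_K}\|_2\le 1$ I obtain $\|\calB(\vec{x})\|_2 \le 1 + \varepsilon < 1.1$, putting $\calB(\vec{x})$ inside $\hat{B}_{1.1}^K$ as required for the Lipschitz estimate and for the validity of $\phi_K$'s uniform bound. Combining:
\begin{equation*}
    |f(\vec{x}) - \phi_K(\calB(\vec{x}))|
    \le \varepsilon + C_2\varepsilon + \varepsilon = (2 + C_2)\varepsilon,
\end{equation*}
and the same bound survives taking the $L^p(\calX, dP)$ norm since it holds pointwise (with the first piece taken in $L^p$ via Assumption~\ref{assump:f}(i)).

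The main obstacle I expect is a careful bookkeeping in the second step: one must show that the operator-norm error of $\calB$, which accumulates across the $L=\log N$ butterfly layers with a low-rank prefactor that depends on $r$ and on the product $w(A)w(B)$ stratified by the three regimes of Table~\ref{tab:complementary-domain-pairs}, can be controlled by an expression that, after setting $r\asymp \log(1/\varepsilon)+\log N$, produces the clean $(\log\tfrac{1}{\varepsilon}+\log N)^2$ factor in $n_b$. One must also ensure the $N^{1/2}$ factor coming from converting a pointwise Fourier-kernel error on $N$ samples into a $2$-norm bound on the $K$-dimensional output is absorbed into the $r$ from Theorem~\ref{thm:low-rank} without leaking an extra polynomial factor in $N$. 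Once this is done, the rest of the argument reduces to the simple triangle-inequality bookkeeping described above.
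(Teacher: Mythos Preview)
Your proposal is correct and follows essentially the same approach as the paper: the same three-term triangle inequality, the same invocation of Corollary~\ref{cor:bnet-para} to get $\|\calB(\vec{x})-\hat{x}|_{I_K}\|_2<\varepsilon$, the same check that $\calB(\vec{x})\in\hat{B}_{1.1}^K$ via $\varepsilon<0.1$, and the same bookkeeping that the second and third terms are bounded pointwise while the first is bounded only in $L^p$. The ``main obstacle'' you flag (controlling the accumulated layerwise error and absorbing any stray $N^{1/2}$) is not part of the proof of Theorem~\ref{thm:approx_bnet} at all---it is exactly the content of Corollary~\ref{cor:bnet-para} (proved separately from Theorem~\ref{thm:matrix-approx}), which the paper simply invokes as a black box here; in particular no $N^{1/2}$ factor appears in the $p=2$ bound, so that worry is moot.
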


\begin{remark}
    Theorem~\ref{thm:approx_bnet} holds for a family of \NetName{}s
    with different $L_\xi$s, which shows no difference in
    view of this approximation analysis. However, different
    $L_\xi$s lead to \NetName{} architectures with different
    number of parameters. \NetName{} with smaller $L_\xi$
    has more parameters (up to a logarithmic factor), hence
    achieves better post-training accuracy, which is verified in
    Section~\ref{sec:numerical-approx-power}.  \NetName{} with
    $L_\xi=0$ has the simplest architecture consisting of only
    convolutional layers and fully-connect layers, which is detailed
    in \cite{Xu2019}.
\end{remark}

\begin{remark}
    Since \NetName{} can be embedded in \INetName{},
    Theorem~\ref{thm:approx_bnet} also holds for a family of
    \INetName{} with $L_\xi = 0, \dots, \log K$, whose numbers of
    parameters are all bounded by
    \begin{equation} \label{eq:cnn-complexity}
        n_b \le C \left( \frac{K^2}{2^{2L_\xi}} \log N + 2^{2L_\xi}
        \right) \left( \log \frac{1}{\varepsilon} + \log N \right)^2.
    \end{equation}
    Notice the number of parameters first increases and then
    decrease as $L_\xi$ varying from $0$ to $\log K$. The lightest
    \INetName{} is achieved when $L_\xi = \frac{\log K}{2}$. As
    shown in \cite{Xu2019}, \INetName{} with $L_\xi=0$ is a regular
    CNN. Hence, Theorem~\ref{thm:approx_bnet} gives an approximation
    result for regular CNNs if \NetName{} is replaced by \INetName{}
    and the parameter number upper bound on $n_b$ is replaced by
    \eqref{eq:cnn-complexity}.
\end{remark}

\begin{proof}
    Under Assumption~\ref{assump:f}, for any $\vec{x} \in \calX$,
    \begin{equation}\label{eq:pf3term}
        |f(\vec{x}) -  \phi_K(\calB (\vec{x}) )| \le |f(\vec{x})
        - g_K(\hat{x}|_{I_K})| + |g_K(\hat{x}|_{I_K}) - g_K(
        \calB (\vec{x}) )| + |g_K(\calB (\vec{x})) - \phi_K(\calB
        (\vec{x}))|,
    \end{equation}
    and by Corollary \ref{cor:bnet-para}, there exists a family
    of \NetName{}s satisfying the requirement and for any $\calB$
    belonging to the family,
    \begin{equation}\label{eq:calB-uniform-bound}
        \| \calB (\vec{x}) -  \hat{x}|_{I_K}\|_2 < \varepsilon < 0.1,
        \quad \forall \vec{x} \in \calX,
    \end{equation}
    due to that $\calX \subset \hat{B}_1^N$.  Note that the $\calK$
    operator in Corollary~\ref{cor:bnet-para} is defined for frequency
    window, c.f. beginning of Section~\ref{sec:approx-fourier}, and
    thus $\hat{x}|_{I_K}$ equals $\calK \vec{x}$.  Combined with
    that $\hat{x}|_{I_K} \in \hat{B}_1^K$, both $\hat{x}|_{I_K}$
    and $ \calB (\vec{x})$ must lie in $\hat{B}_{1.1}^K$ on which
    $g_K$ and $\phi_K$ are defined.  We now bound the three terms in
    \eqref{eq:pf3term} respectively.
    
    The 2nd term: By that $\hat{x}|_{I_K}, \, \calB (\vec{x}) \in
    \hat{B}_{1.1}^K$, Assumption~\ref{assump:f} (ii) gives that
    \begin{equation*}
        |g_K(\hat{x}|_{I_K}) - g_K( \calB (\vec{x}) )| \le C_2 \|
        \hat{x}|_{I_K} - \calB (\vec{x}) \|_2 < C_2 \varepsilon,
    \end{equation*}
    where the second inequality uses \eqref{eq:calB-uniform-bound}.
    This pointwise upper bound leads to the $p$-norm bound of this
    term to be $C_2 \varepsilon$ for all $p$.
    
    The 3rd term: By the uniform approximation of $g_K$ on
    $\hat{B}_{1.1}^K$ guaranteed by Assumption~\ref{assump:f} (iii),
    the fully-connected network $\phi_K$ satisfying the requirement
    exists, and pointwisely,
    \begin{equation*}
        |g_K(\calB (\vec{x})) - \phi_K(\calB (\vec{x}))| < \varepsilon.
    \end{equation*}
    This proves that the $p$-norm bound of this term is smaller than
    $\varepsilon$ for all $p$.
    
    The 1st term: the $p$-norm is at most $\varepsilon$ by assumption.
    
    Putting together proves the $p$-norm upper bound in the claim.
\end{proof}

\begin{remark}
    When the $p$-norm residual in Assumption~\ref{assump:f} (i) can
    be made zero, which includes the case of band-limited $f$ and
    Example 2, the $\varepsilon$ in the Theorem~\ref{thm:approx_bnet}
    can be chosen arbitrarily close to zero and the final approximation
    bound can be made $\varepsilon(1+C_2)$.
\end{remark}

Note that the above results demonstrate the power of \NetName{} when
$K < N$. Assume the function $f$ and the frequency truncated $g_K$
has similar regularity level, which is typically true as shown in both
Example 1 and Example 2. Then a fully-connected network approximating
the original function $f: \calX \to \mathbb{R}$,  requires a total
number of parameter,
\begin{equation*}
    n_{fc} \le c(s, N) \varepsilon^{-N/s} (\log
    \frac{1}{\varepsilon}+1).
\end{equation*}
As a comparison, the overall number of parameter for \NetName{}
with a fully-connected network is,
\begin{equation*}
    n_b + n_{fc} \le C K \log N \left( \log \frac{1}{\varepsilon}
    + \log N \right)^2 + c(s, K) \varepsilon^{-K/s} (\log
    \frac{1}{\varepsilon}+1),
\end{equation*}
where the second term which involves $\varepsilon^{-K/s}$ dominates
the model complexity.  The improvement from $\varepsilon^{-N/s}$ to
$\varepsilon^{-K/s}$ is due to the feature extraction of truncated
Fourier coefficients which are suitable for the class of functions
as described in Assumption~\ref{assump:f}. In other words, while $N$
being the ambient dimensionality of the input data, $K$ upper-bounds
the intrinsic complexity (effective dimension) of the regression
problem $f$ on the data $\calX$.

{\bf Discussion on the dense prediction networks.} The method of
analyzing $x$-to-1 network functions in this section can extend
to the analysis of $x$-to-$x$ function mappings, known as dense
prediction in deep learning literature. Typical deep convolutional
networks for $x$-to-$x$ prediction include the U-net architecture
\cite{Ronneberger2015}, which consists of a module of multiple
convolutional layers, a bottleneck module with dense connections,
and another module of multiple conv-t layers, namely transpose
convolutional layers. The corresponding network architecture using
\NetName{} would be $\calB$-(fc layers)-$\calB^T$ where $\calB$
denotes a module of \NetName{} layers, and $\calB^T$ is again a
\NetName{} module due to the symmetric role of time of frequency in the
model. Extending the approximation theory, such U-net which replaces
traditional convolutional layers by \NetName{} layers can provably
approximate operators in the form of ${\cal F}^{-1} \circ \sigma \circ
{\cal F}$, where $\cal F$ is Fourier transform operator, and $\sigma$
is some non-linear operator (not necessarily entry-wise). The covered
family of operators involve low/high-pass filtering, de-convolution,
among others, in signal processing.

Another example of dense prediction mapping is the general Laplace
operator (with variable coefficient) in physics which can be
represented as $\sum_i \calA_i \mathcal{K} \calD_i \calF $, where
$i$ sums over a small number of terms corresponding to a low-rank
decomposition of the amplitude function, $\calA_i$s and $\calD_i$s
are diagonal matrices, and $\calK$ is a smooth Fourier integral
operator (FIO)~\cite{Candes2009, Demanet2011}. As a heads-up, a
parallel reading of the proof of Theorem~\ref{thm:matrix-approx}
and \cite{Candes2009, Li2017} reveals that similar theorem can be
provided for smooth FIOs, i.e., $\Kfun{\Phi(\xi,t)}$ with smooth
$\Phi(\xi,t)$ satisfying homogeneity condition of degree 1.  Such an
extension of Theorem~\ref{thm:matrix-approx} will give error control of
\NetName{} approximation of the operator $\mathcal{K}$, and $\calA_i$s,
$\calD_i$s can be represented by fully-connected networks, possibly
coodinate-separated and shallow. The approximation to smooth FIOs
will enable the usages of \NetName{} to represent a large class of
elliptic operators.

\subsection{Approximation of the Fourier Kernel}
\label{sec:approx-fourier}

In this section, we analyze the approximation power of the \NetName{}
on discrete Fourier kernel, whose matrix entry is defined as
$\calK_{ij} = \Kfun{\xi_i \cdot t_j}$ where $t_j$ and $\xi_i$ are
uniformly distributed on $[0,1)$ and $[k_0, k_0+K)$ ($K \leq N$)
respectively. The analysis result shows that though \NetName{} by
construction has very low complexity as the number of parameters
is on the order of the input/output data size, it exhibits full
approximation power in terms of function representations.

\begin{theorem} \label{thm:matrix-approx}
    Let $N$ denote the size of the input and $K$ denote the size
    of the output in the \NetName{}. $L$ and $r$ are two parameters
    such that $\pi e K \leq r 2^{\min(L, \log K)}$. $L$ is the depth
    of the \NetName{} and $r$ is the size of mixing channels. There
    exists a parametrized \NetName{}, $\calB(\cdot)$, approximating
    the discrete Fourier transform such that for any bounded input
    vector $\vec{x}$, the error of the output of the \NetName{}
    satisfies that for any $p \in [1, \infty]$
    \begin{equation}
        \norm{ \calK \vec{x} - \calB(\vec{x}) }_p \leq m^{\frac{1}{p}}
        r^{L_t (1-\frac{1}{p}) + L_\xi \frac{1}{p}+1} \left( 2\Lambda_r
        \right)^{L+3} \frac{ K(\pi e)^r }{r^{r-1}} \norm{\vec{x}}_p,
    \end{equation}
    where $m = \min(1, K/2^L)$ and $\Lambda_r = \LebConst{}$ is the
    Lebesgue constant.

    If $L \leq \log K$, then the error also satisfies,
    \begin{equation}
        \norm{ \calK \vec{x} - \calB(\vec{x}) }_p \leq C_{r,K} \left(
        \frac{\Lambda_r}{2^{r-2}} \right)^L r^{L_t (1-\frac{1}{p})
        + L_\xi \frac{1}{p}} \norm{\vec{x}}_p,
    \end{equation}
    where $C_{r,K} = (2\Lambda_r)^3 \frac{(\pi eK)^r}{(2r)^{r-1}}$
    is a constant depending on $r$ and $K$, and independent of $L$.
\end{theorem}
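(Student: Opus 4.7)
The plan is to use the matrix representation from Section~\ref{sec:bnetMatrix}, which (under \emph{Butterfly} initialization and the complex-to-real embedding of Appendix~\ref{app:complex-real}) writes $\calB(\vec{x})$ as the matrix product $U G^{(L)}\cdots G^{(L_t+1)} M H^{(L_t)}\cdots H^{(1)} V \vec{x}$, so that bounding $\|\calK\vec{x}-\calB(\vec{x})\|_p$ reduces to bounding the matrix difference $\calK-\calB_{\mathrm{mat}}$ in the induced $p$-to-$p$ norm. For every layer $\ell$ and every complementary domain pair $(A,B)$ in Table~\ref{tab:complementary-domain-pairs}, the butterfly recursions \eqref{eq:sum1}--\eqref{eq:sum4} yield an ``ideal'' transfer that would be exact in the Fourier-kernel sense, and Theorem~\ref{thm:low-rank} controls the local error incurred by replacing the exact kernel block by its rank-$r$ Chebyshev approximation. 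The hypothesis $\pi e K\le r\,2^{\min(L,\log K)}$ combined with the last column of Table~\ref{tab:complementary-domain-pairs} forces $\pi e\,w(A)w(B)/(2r)\le 1/2$ for every pair, placing every per-layer error inside the decaying regime of \eqref{eq:low-rank1}--\eqref{eq:low-rank2}.

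Next I would telescope: write $\calK-\calB_{\mathrm{mat}}$ as a sum of $L+3$ matrices of the form (subsequent layers applied exactly) $\cdot$ (local per-layer interpolation error) $\cdot$ (earlier layers as produced by the network). The action of a single subsequent recursion on the mixing-channel direction is precisely a Chebyshev interpolation, which has operator norm $\Lambda_r$ in $\ell^\infty$ and $r\Lambda_r$ in $\ell^1$; by Riesz--Thorin interpolation it has norm $r^{1-1/p}\Lambda_r$ in $\ell^p$ on the time-indexed (pre-switch) side and $r^{1/p}\Lambda_r$ on the frequency-indexed (post-switch) side. The extra factor of $2$ in $(2\Lambda_r)^{L+3}$ absorbs the summation over the two children $C\succ B$; accumulating $L$ recursion layers with three extra factors (initial interpolation, switch, final interpolation) produces the $(2\Lambda_r)^{L+3}$ prefactor. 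The $m^{1/p}$ term comes from converting the per-block error into the output-dimension norm, and the maximum of the $(\pi e w(A)w(B)/(2r))^r$ terms, bounded via the last column of Table~\ref{tab:complementary-domain-pairs} by $K(\pi e)^r/r^{r-1}$ after pulling out one factor of $r$, delivers the kernel-decay tail.

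For the sharper bound in the regime $L\le \log K$, only the first row of Table~\ref{tab:complementary-domain-pairs} is active, so at \emph{every} one of the $L$ recursion layers one picks up the same multiplicative factor $(\pi e K/(2r\,2^L))^r$. Rather than taking the single worst layer and summing $L+O(1)$ copies, I would distribute one power of the $2^{-r}$ base against each of the $L$ layers' $2\Lambda_r$ amplification, producing the compounded per-layer factor $\Lambda_r/2^{r-2}$ and hence the $(\Lambda_r/2^{r-2})^L$ decay; the remaining $L$-independent constants (the two interpolation endpoints, the switch, and the Chebyshev prefactor) are absorbed into $C_{r,K}=(2\Lambda_r)^3(\pi e K)^r/(2r)^{r-1}$. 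The main obstacle is the norm bookkeeping across the switch layer: the correct choice of dual norms on the two sides (essentially $\ell^\infty$ on the frequency-grid side and $\ell^1$ on the time-grid side) is what keeps the exponent of $r$ minimal, and Riesz--Thorin interpolation back to $\ell^p$ is what generates the asymmetric exponent $L_t(1-1/p)+L_\xi/p+1$; reconciling this two-sided structure with a single clean $p$-norm bound requires the most care.
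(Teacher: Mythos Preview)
Your proposal is correct and mirrors the paper's own argument: the proof there also fixes the \emph{Butterfly} initialization so that $\calB$ becomes the linear map $U G^{(L)}\cdots M\cdots H^{(1)}V$, telescopes $\calK-\calB_{\mathrm{mat}}$ into $L+3$ error terms, bounds each factor's $1$- and $\infty$-norms via the Lebesgue constant (Lemmas~\ref{lem:U}--\ref{lem:Ginf}) and each error block via Theorem~\ref{thm:low-rank} with the $w(A)w(B)$ values from Table~\ref{tab:complementary-domain-pairs}, and then invokes Riesz--Thorin. The only cosmetic discrepancy is that the paper interpolates once at the very end on the assembled bounds $\varepsilon_1,\varepsilon_\infty$ rather than layer-by-layer, and its lemmas have the $r$-factor on the \emph{opposite} norm from what you wrote ($\|H^{(\ell)}\|_1\le 2\Lambda_r$, $\|H^{(\ell)}\|_\infty\le 2r\Lambda_r$, and the reverse for $G^{(\ell)}$); since you land on the correct exponent $r^{L_t(1-1/p)+L_\xi/p}$ this is only a labeling slip.
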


The proof of Theorem~\ref{thm:matrix-approx} is constructive. We first
fill the \NetName{} with a specific set of parameters ({\it Butterfly}
initialization) based on the complementary low-rank property of the
discrete Fourier kernel (see Theorem~\ref{thm:low-rank}).  Using the
matrix representation of \NetName{}, $1$-norm and $\infty$-norm of each
matrix can be bounded. Combined with the low-rank approximation error,
we derive the $1$-norm and $\infty$-norm upper bound for \NetName{}.
Applying Riese-Thorin interpolation theorem, we reach to the conclusion
of Theorem~\ref{thm:matrix-approx} for general norm index $p$.
Section~\ref{sec:proof} provides the detailed proof of the theorem.

Previously, in the context of fast algorithms, \citet{Kunis2012}
analyzed the approximations of a simplified {\it Butterfly} scheme
and \citet{Demanet2012a} analyzed general {\it Butterfly} scheme
under different error measures on the input and output. While as a
side product of our proof, we also obtain the error estimate of the
matrix approximation of the general {\it Butterfly} schemes in terms
of matrix norms.

For a problem with fixed input and output size, we can tune two
parameters $r$ and $L$ to reach desired accuracy. As $r$ increases,
which corresponds the increase of mixing channel size in each layer,
the approximation error decays mainly as $r^{-r}$. Interestingly,
when $L$ increases, which corresponds to increase the depth of the
\NetName{}, the error bound decays exponentially in $L$.

Combining Theorem~\ref{thm:matrix-approx} and the parameter number
estimation \eqref{eq:bnet-complexity-precise}, we derive the network
complexity analysis for \NetName{} under a given approximation accuracy
as follows.

\begin{corollary} \label{cor:bnet-para}
    Let $N$ denote the size of the input and $K$ denote the size
    of the output in the \NetName{}. For any $0 < \varepsilon < 1$,
    there exists a family of \NetName{}s with $L_\xi = 0, 1, \dots,
    \log K$, whose numbers of parameters are all bounded by
    \begin{equation}\label{eq:bound-nb}
        n_b \leq C K \log N \left( \log \frac{1}{\varepsilon} +
        \log N \right)^2,
    \end{equation}
    where $C < (2\pi e)^2 \cdot 90$ is an absolute constant, and for
    any $\calB$ denoting a \NetName{} in the family,
    \begin{equation}
        \norm{ \calK \vec{x} - \calB(\vec{x}) }_2 <
        \varepsilon \norm{\vec{x}}_2,
    \end{equation}
    for any input vector $\vec{x}$.
\end{corollary}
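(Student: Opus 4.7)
The plan is to combine the error estimate in Theorem~\ref{thm:matrix-approx} with the parameter count bound~\eqref{eq:bnet-complexity-precise}, making a single choice of depth $L$ and mixing-channel size $r$ that works uniformly across all admissible $L_\xi \in \{0, 1, \ldots, \log K\}$.

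First I would specialize the first inequality in Theorem~\ref{thm:matrix-approx} to $p = 2$. Since $L_t + L_\xi = L$, the exponent $L_t(1-1/p) + L_\xi/p + 1$ collapses to $L/2 + 1$, and
\[
\|\calK \vec{x} - \calB(\vec{x})\|_2 \leq m^{1/2}\, r^{L/2+1} (2\Lambda_r)^{L+3} \frac{K(\pi e)^r}{r^{r-1}} \|\vec{x}\|_2
\]
becomes independent of $L_\xi$. This is the key observation that lets every member of the family be controlled by a single estimate.

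Next I would set $L = \log N$. Since $K \leq N$, $\min(L, \log K) = \log K$, so the standing hypothesis $\pi e K \leq r \cdot 2^{\min(L, \log K)}$ of Theorem~\ref{thm:matrix-approx} reduces to $r \geq \pi e$, which the eventual choice of $r$ will satisfy. Taking the logarithm of the error bound, using $m \leq 1$ and $\Lambda_r = \tfrac{2}{\pi}\ln r + 1$ (so $\log(2\Lambda_r) = O(\log\log r)$), the right-hand side drops below $\varepsilon \|\vec{x}\|_2$ as soon as
\[
r \log\!\bigl(r/(\pi e)\bigr) \;\geq\; \log(1/\varepsilon) + \log K + (L/2+1)\log r + (L+3)\log(2\Lambda_r) + O(1).
\]
With $L = \log N$ and $K \leq N$, the right-hand side is $O\bigl(\log(1/\varepsilon) + \log N \cdot \log r\bigr)$, so the choice
\[
r \;=\; \bigl\lceil 2\pi e\,(\log(1/\varepsilon) + \log N) \bigr\rceil
\]
makes $r \log r$ dominate and delivers the desired bound.

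Finally, substituting into~\eqref{eq:bnet-complexity-precise} yields
\[
n_b \;\leq\; 90\, L K r^2 \;\leq\; 90 \cdot (2\pi e)^2 \cdot K \log N \bigl(\log(1/\varepsilon) + \log N\bigr)^2,
\]
which gives the claim with $C \leq (2\pi e)^2 \cdot 90$. The only real obstacle is calibrating $r$ against two competing demands: it must be large enough that the super-geometric factor $(\pi e/r)^r$ swallows the polynomial-in-$N$ prefactor $(2\Lambda_r)^{L+3} r^{L/2+1} K$, yet small enough that $r^2 = O\bigl((\log N + \log(1/\varepsilon))^2\bigr)$ to control the parameter count. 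Because $\Lambda_r$ grows only logarithmically in $r$, this balance is comfortably achievable and is saturated by the choice above.
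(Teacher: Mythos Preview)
Your proposal is correct and follows essentially the same route as the paper: specialize Theorem~\ref{thm:matrix-approx} to $p=2$ so that the exponent $L_t(1-1/p)+L_\xi/p+1$ collapses to $L/2+1$ and the bound becomes uniform in $L_\xi$, set $L=\log N$, choose $r \approx 2\pi e(\log N + \log(1/\varepsilon))$ so that the factor $(r/\pi e)^{-r}$ absorbs the $(2\Lambda_r)^{L+3} r^{L/2+1} K$ prefactor, and then plug into~\eqref{eq:bnet-complexity-precise}. The paper's proof organizes the error bound as a product of three explicit factors (one raised to the $L$-th power, one depending on $r$ alone, and one equal to $(r/\pi e)^{-\log(1/\varepsilon)}$) and checks numerically that the first two are each at most $1$ when $\alpha=2\pi e$, which is a slightly cleaner bookkeeping than your logarithmic inequality, but the substance is identical.
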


\begin{proof}
    Comparing the statements in Corollary~\ref{cor:bnet-para} and
    Theorem~\ref{thm:matrix-approx}, we aim to find a $r$ such that
    \begin{equation}
        T = \sqrt{\min(1, \frac{K}{2^L})} \sqrt{r}^{L+2} \left( 2 +
        \frac{4}{\pi} \ln r \right)^{L+3} \frac{ K(\pi e)^r }{r^{r-1}}
        < \varepsilon.
    \end{equation}
    In this proof, we assume that $L = \log N$.  Since we focus
    on the approximation error under $p=2$ norm, the prefactor $T$
    is then independent of $L_\xi$ and $r$ becomes the only tunable
    parameter in \NetName{}.  The following proof holds for all $0
    \leq L_\xi \leq \log K$.
    
    When $r \geq \alpha (\log N + \log \frac{1}{\varepsilon})$, we have,
    \begin{equation} \label{eq:boundeps}
        T \leq \left( \frac{ 2 \sqrt{r} \left( 2 + \frac{4}{\pi}
        \ln r \right)}{\left(\frac{r}{\pi e}\right)^\alpha} \right)^L
         \frac{ r^2 \left( 2 + \frac{4}{\pi} \ln r \right)^{3}
        }{\left( \frac{r}{\pi e} \right)^{\alpha-1}} \frac{ 1
        }{\left(\frac{r}{\pi e}\right)^{\log \frac{1}{\varepsilon}}}\,.
    \end{equation}
    For both the first and second term in \eqref{eq:boundeps} the
    denominator grows faster in $r$ than the numerator. When $\alpha =
    2\pi e$, both the first and second terms are smaller than one for
    all $r \geq \alpha$~\footnote{Both inequalities are numerically
    validated as well.}. Then we have when $r \geq 2\pi e (\log N +
    \log \frac{1}{\varepsilon})$,
    \begin{equation} 
        T < \frac{ 1 }{\left(\frac{r}{\pi e}\right)^{\log
        \frac{1}{\varepsilon}}} < \frac{ 1 }{2^{\log
        \frac{1}{\varepsilon}}} = \varepsilon.
    \end{equation}

    Based on the network complexity counting in
    \eqref{eq:bnet-complexity-precise}, there exists an absolute
    constant $C_0 = 90$, and $r$ such that the number of parameters
    in \NetName{} is
    \begin{equation} \label{eq:bnet-complexity-bound}
        n_b \leq C_0 K \log N r^2.
    \end{equation}
    Substituting the conditions for $r$ into
    \eqref{eq:bnet-complexity-bound}, we conclude that there exists
    a \NetName{} with the number of parameters bounded by
    \begin{equation}
        n_b \leq C K \log N \left( \log \frac{1}{\varepsilon} +
        \log N\right)^2, \quad C \leq (2\pi e)^2 C_0,
    \end{equation}
    such that $T < \varepsilon$, which proves the theorem.
\end{proof}

\subsection{Proof of Theorem~\ref{thm:matrix-approx}}
\label{sec:proof}

This section first provides a few lemmas and their proof bounding each
sparse matrix in \eqref{eq:butterflyNet}. And then
Theorem~\ref{thm:matrix-approx} is proved in detail.

\begin{lemma} \label{lem:Lebesgue}
    Let $\{z_i\}_{i=1}^{r}$ be $r$ Chebyshev points and $\calL_k(x)$ be
    the Lagrange polynomial of order $r$. For any $r$, the Lebesgue
    constant $\Lambda_r$ is bounded as
    \begin{equation*}
        \Lambda_r = \max_{-1\leq x \leq 1} \sum_{i=1}^r \abs{\calL_i(x)}
        \leq \frac{2}{\pi} \ln r + 1.
    \end{equation*}
\end{lemma}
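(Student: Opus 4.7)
My plan is to prove this classical Lebesgue-constant bound by passing to trigonometric coordinates, writing the Lagrange basis in closed form, and reducing to a discrete harmonic sum. Since the Lebesgue constant is affine-invariant, I may work on $[-1,1]$ with nodes $z_k=\cos\theta_k$, $\theta_k=(k-1)\pi/r$, and set $x=\cos\theta$ for $\theta\in[0,\pi]$.

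The first step is to use the identity $\cos\alpha-\cos\beta = -2\sin\tfrac{\alpha+\beta}{2}\sin\tfrac{\alpha-\beta}{2}$ to turn each ratio $(x-z_p)/(z_k-z_p)$ appearing in $\calL_k(x)=\prod_{p\neq k}(x-z_p)/(z_k-z_p)$ into a ratio of sines. Telescoping the resulting product via the standard Chebyshev identity that relates $\sin(r\theta)$ to the product over node differences yields the classical closed form
$$\calL_k(\cos\theta) = \frac{c_k\sin(r\theta)}{r(\cos\theta-\cos\theta_k)},$$
where $c_k$ is a bounded sign/normalization factor ($|c_k|\le 1$, with $|c_k|=1/2$ at the endpoint nodes). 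Summing absolute values gives
$$\sum_{k=1}^r|\calL_k(\cos\theta)| \;=\; \frac{|\sin(r\theta)|}{r}\sum_{k=1}^r\frac{|c_k|}{|\cos\theta-\cos\theta_k|}.$$

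The second step is to bound this quantity uniformly in $\theta$. Applying $|\cos\theta-\cos\theta_k|=2|\sin\tfrac{\theta+\theta_k}{2}\sin\tfrac{\theta-\theta_k}{2}|$ and letting $k^*$ be an index minimizing $|\theta-\theta_{k^*}|$, the term $k=k^*$ can be estimated using $|\sin(r\theta)|=|\sin(r(\theta-\theta_{k^*}))|\le r|\theta-\theta_{k^*}|$ together with the local Taylor behaviour of the cosine, yielding a contribution bounded by $1$. For $k\neq k^*$, pulling out $|\sin(r\theta)|\le 1$ and invoking the elementary inequality $\sin u\ge 2u/\pi$ on $[0,\pi/2]$ to write $|\sin\tfrac{\theta-\theta_k}{2}|\ge |k-k^*|/r$ reduces the remaining sum to at most $\tfrac{2}{\pi}\sum_{j=1}^{r-1}\tfrac{1}{j}\le\tfrac{2}{\pi}\ln r$, giving the stated bound.

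The main obstacle is the bookkeeping required to extract the sharp constant $2/\pi$ rather than a larger multiple: one must carefully isolate the near-singular index $k^*$ (which delivers the additive $1$) from the harmonic tail (which delivers $\tfrac{2}{\pi}\ln r$), and separately handle the endpoint nodes where $|c_k|$ and the local geometry of $|\cos\theta-\cos\theta_k|$ differ from the generic case. The overall scheme follows the classical treatment due to Bernstein and Rivlin, and the additive constant $1$ is known to be loose but sufficient for the uses of this lemma in the sequel.
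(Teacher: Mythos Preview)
The paper does not actually prove this lemma: it states that the bound is a standard fact of Chebyshev interpolation and refers the reader to Rivlin's book. So there is no argument in the paper to compare against, and your sketch already goes further than the paper does.

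Your overall strategy---pass to $x=\cos\theta$, express the Lagrange basis in closed trigonometric form, separate the nearest node $k^*$ from the rest, and reduce the tail to a discrete harmonic sum via $\sin u\ge 2u/\pi$---is precisely the classical route in Rivlin. One detail needs fixing, though. The bound $\tfrac{2}{\pi}\ln r+1$ in Rivlin is proved for the first-kind Chebyshev nodes $\theta_k=(2k-1)\pi/(2r)$, whose node polynomial is $T_r(x)=\cos(r\theta)$; the correct closed form is
\[
|\calL_k(\cos\theta)|=\frac{|\cos(r\theta)|\,\sin\theta_k}{r\,|\cos\theta-\cos\theta_k|},
\]
with $\cos(r\theta)$ rather than $\sin(r\theta)$ in the numerator. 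With the angles $\theta_k=(k-1)\pi/r$ you wrote (matching the paper's earlier display), the quantity $\sin(r\theta)/(\cos\theta-\cos\theta_k)$ is not a polynomial in $x$, since $\sin(r\theta)=\sin\theta\cdot U_{r-1}(\cos\theta)$ carries an extra $\sin\theta$ factor that does not cancel, so your formula for $\calL_k$ cannot hold as stated. Switching to the first-kind nodes and replacing $\sin(r\theta)$ by $\cos(r\theta)$ repairs the argument; the near-node estimate then uses $|\cos(r\theta)|=|\sin(r(\theta-\theta_{k^*}))|\le r|\theta-\theta_{k^*}|$, and the rest of your outline goes through to give the stated constant.
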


Lemma~\ref{lem:Lebesgue} is a standard result of Chebyshev interpolation
and the proof can be found in \cite{Rivlin1990}.

\begin{corollary} \label{cor:Lebesgue}
    Let $\{z_i\}_{i=1}^{r}$ be $r$ Chebyshev points and $\calL_k(x)$ be
    the Lagrange polynomial of order $r$. For any $r$ and $i \leq r$,
    \begin{equation*}
        \max_{-1\leq x \leq 1} \abs{\calL_i(x)}
        \leq \frac{2}{\pi} \ln r + 1.
    \end{equation*}
\end{corollary}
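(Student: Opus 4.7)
The plan is immediate from Lemma~\ref{lem:Lebesgue}. The key observation is that each $|\calL_i(x)|$ is non-negative, so for any fixed index $i \in \{1,\dots,r\}$ and any $x \in [-1,1]$,
\begin{equation*}
    \abs{\calL_i(x)} \le \sum_{k=1}^{r} \abs{\calL_k(x)}.
\end{equation*}
Taking the supremum over $x \in [-1,1]$ on both sides and invoking Lemma~\ref{lem:Lebesgue} yields
\begin{equation*}
    \max_{-1 \le x \le 1} \abs{\calL_i(x)} \le \max_{-1 \le x \le 1} \sum_{k=1}^{r} \abs{\calL_k(x)} = \Lambda_r \le \frac{2}{\pi}\ln r + 1,
\end{equation*}
which is the claimed bound.

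There is essentially no obstacle; the corollary is a one-line consequence of the Lebesgue constant estimate, since bounding a single summand by the full sum of non-negative terms is trivial. The only thing to be careful about is that the bound is uniform in $i$, but this follows because the right-hand side $\Lambda_r$ is independent of $i$. I would write the proof as the two displayed inequalities above, without any further elaboration.
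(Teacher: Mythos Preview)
Your proof is correct and matches the paper's approach exactly: the paper simply states that the corollary ``is an immediate result of Lemma~\ref{lem:Lebesgue},'' and your argument---bounding a single non-negative summand $|\calL_i(x)|$ by the full sum $\sum_k |\calL_k(x)| = \Lambda_r$---is precisely the one-line justification behind that remark.
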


Corollary~\ref{cor:Lebesgue} is an immediate result of
Lemma~\ref{lem:Lebesgue}.

\begin{lemma} \label{lem:U}
    Let $U$ be the block diagonal matrix defined at interpolation
    layer $(\ell = L)$, then
    \begin{equation*}
        \norm{U}_1 \leq m \left( \LebConst{} \right) \text{ and }
        \norm{U}_\infty \leq \LebConst{}
    \end{equation*}
    where $r$ is the number of Chebyshev points and $m = K/2^L$.
\end{lemma}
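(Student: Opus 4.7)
The plan is to reduce both bounds to a single diagonal block of $U$ and then apply the Lebesgue-constant estimates from Lemma~\ref{lem:Lebesgue} and Corollary~\ref{cor:Lebesgue}. Since $U$ is block diagonal with $2^L$ blocks, each of size $m \times r$, both $\|U\|_1$ and $\|U\|_\infty$ equal the $1$-norm and $\infty$-norm, respectively, of any single block. Every block has identical entrywise magnitudes: the initialization $W^{(L)}_{p,0,k} \exeq \Kfun{(\xi_p - \xi_k)\cdot t_0}\calL_k(\xi_p)$ depends on the block index $i$ only through the difference $\xi_p - \xi_k$ and through Lagrange weights at relative positions, both of which are invariant under the affine map from $A^L_i$ onto the standard reference interval. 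Writing the common block as $W \in \mathbb{C}^{m \times r}$ and using that the Fourier phase factor is unimodular, $|W_{p,k}| = |\calL_k(\xi_p)|$, so the problem reduces to bounding row and column $\ell_1$ sums of the Lagrange-weight matrix sampled at the uniform output grid points of $A^L_i$.

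For the $\infty$-norm, the plan is to fix a row index $p$ and observe that, after the affine change of variable mapping $A^L_i$ onto $[-\tfrac12,\tfrac12]$, the row sum $\sum_{k=1}^{r}|\calL_k(\xi_p)|$ is precisely the Lebesgue function of the order-$r$ Chebyshev interpolation evaluated at a point in $[-\tfrac12,\tfrac12]$. Lemma~\ref{lem:Lebesgue} bounds this by $\Lambda_r = \LebConst{}$, uniformly in $p$, which yields $\|U\|_\infty \le \Lambda_r$.

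For the $1$-norm, the plan is to fix a column index $k$ and bound the column sum entrywise: $\sum_{p=1}^{m}|\calL_k(\xi_p)| \le m \max_p |\calL_k(\xi_p)| \le m \Lambda_r$, where the last inequality is Corollary~\ref{cor:Lebesgue}. Hence $\|U\|_1 \le m \Lambda_r$.

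Rather than a genuine obstacle, the only step requiring care is confirming that the bounds computed on the complex matrix representation of $U$ transfer to the real implementation produced by the $\exeq$ embedding of Appendix~\ref{app:complex-real}. Since each complex entry factorizes into a unimodular Fourier phase and a real Lagrange weight, the embedding acts on entries in a structured way that preserves row and column absolute sums, so the bounds pass through with no additional constants. With that reconciliation, the two displayed inequalities in the lemma follow directly.
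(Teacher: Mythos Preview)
Your core argument---reducing to a single $m\times r$ block, using unimodularity of the phase to get $|W_{p,k}|=|\calL_k(\xi_p)|$, then bounding the row sum by Lemma~\ref{lem:Lebesgue} and the column sum by $m$ times Corollary~\ref{cor:Lebesgue}---is exactly the paper's proof.

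The final paragraph, however, is both unnecessary and incorrect. The paper states and proves the lemma for the \emph{complex} matrix $U$; the equivalence between the real ReLU network with $\exeq$ weights and the complex matrix product is invoked once at the start of the proof of Theorem~\ref{thm:matrix-approx}, after which all layer-wise norm bounds are applied to the complex factors. So no reconciliation with the real embedding is needed here. Moreover, your claim that the embedding ``preserves row and column absolute sums'' is false: for $a=e^{\imath\theta}L$ with $L\in\bbR$, each row of the $4\times4$ block in Appendix~\ref{app:complex-real} has absolute sum $2|L|\bigl(|\cos\theta|+|\sin\theta|\bigr)$, which can exceed $|a|=|L|$ by a factor up to $2\sqrt{2}$. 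Drop that paragraph and the proof is clean.
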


\begin{proof}
    $U$ is a block diagonal matrix with block $U_i$. $U_i$ are the
    same $m \times r$ with entry $\Kfun{ (\xi_p-\xi_k)\cdot t_0}
    \calL_k(\xi_p)$ for $\xi_p$ and $\xi_k$ being uniform and Chebyshev
    points in $A^{L}_i$.  By the definition of matrix 1-norm, we have
    \begin{equation}
        \norm{U}_1 = \norm{U_1}_1 \leq \max_{\xi_k \in A^L_1}
        \sum_{\xi_p \in A^L_1} \abs{ \Kfun{ (\xi_p-\xi_k) \cdot t_0}
        \calL_k(\xi_p) } \leq m \left( \LebConst{} \right).
    \end{equation}
    By the definition of matrix $\infty$-norm, we have
    \begin{equation}
        \norm{U}_\infty = \norm{U_1}_\infty \leq \max_{\xi_p \in A^L_1}
        \sum_{\xi_k \in A^L_1} \abs{ \Kfun{ (\xi_p-\xi_k) \cdot t_0}
        \calL_k(\xi_p) } \leq \LebConst{}.
    \end{equation}
\end{proof}

\begin{lemma} \label{lem:M}
    Let $M$ be the block matrix defined at switch layer, then
    \begin{equation*}
        \norm{M}_1 \leq r \text{ and } \norm{M}_\infty \leq r,
    \end{equation*}
    where $r$ is the number of Chebyshev points.
\end{lemma}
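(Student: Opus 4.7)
The plan is to exploit the extremely sparse block structure of $M$ together with the fact that every nonzero entry is a unimodular complex number, so that the $1$-norm and $\infty$-norm reduce to simply counting nonzero entries per column and per row respectively.

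First I would unravel the row and column indexing of $M$. A row of $M$ is indexed by a triple $(i_1, j_1, k_1)$ with $i_1 \in \{0, \dots, 2^{L_\xi}-1\}$ (outer block row), $j_1 \in \{0, \dots, 2^{L_\xi}-1\}$ (inner block row), and $k_1 \in \{1, \dots, r\}$; similarly a column is indexed by $(j_2, i_2, s_2)$ with $j_2 \in \{0, \dots, 2^{L_t}-1\}$, $i_2 \in \{0, \dots, 2^{L_t}-1\}$, $s_2 \in \{1, \dots, r\}$. The definition in Section~\ref{sec:bnetMatrix} says that in the outer block $M_{i_1, j_2}$ all inner blocks vanish except the $(j_2, i_1)$ block, which equals $W^{(L_t), i_1, j_2}_{:,:}$. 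Therefore
\begin{equation*}
   M\bigl[(i_1, j_1, k_1),\, (j_2, i_2, s_2)\bigr]
   =
   \begin{cases}
      W^{(L_t), i_1, j_2}_{k_1, s_2}, & j_1 = j_2 \text{ and } i_2 = i_1,\\
      0, & \text{otherwise.}
   \end{cases}
\end{equation*}

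Next I would use the \emph{Butterfly} initialization formula $W^{(L_t), i, j}_{k, s} \exeq \Kfun{\xi_k^{A^{L_t}_i} \cdot t_s^{B^{L_\xi}_j}}$ to note that every nonzero entry of $M$ has modulus exactly $1$. Fixing a row $(i_1, j_1, k_1)$, the sparsity pattern above forces $j_2 = j_1$ and $i_2 = i_1$, leaving only $s_2 \in \{1, \dots, r\}$ free, so the row has exactly $r$ nonzero entries and the row sum of absolute values is at most $r$. The same counting in the other direction shows each column of $M$ has exactly $r$ nonzero entries of modulus $1$, so the column sum of absolute values is also at most $r$. This yields $\|M\|_\infty \le r$ and $\|M\|_1 \le r$ simultaneously.

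There is no real obstacle here; the only thing to watch is keeping straight which index plays which role in the inner versus outer block structure, since the inner block of $M_{i,j}$ is nonzero at the transposed position $(j,i)$ rather than $(i,j)$. After that bookkeeping, the claim follows immediately from $|\Kfun{\cdot}| = 1$ and a direct nonzero count, mirroring the style of the proof of Lemma~\ref{lem:U}.
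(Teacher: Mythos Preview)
Your proposal is correct and follows essentially the same approach as the paper. The paper's proof is more terse---it simply observes that the block structure of $M$ gives $\|M\|_1 \leq \max_{j,i}\|W_{j,i}\|_1$ and $\|M\|_\infty \leq \max_{j,i}\|W_{j,i}\|_\infty$, and then evaluates these as $r$ using $|\Kfun{\cdot}|=1$---whereas you make the same argument by explicitly unraveling the index triples and counting the $r$ unimodular nonzeros per row and per column; the underlying idea is identical.
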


\begin{proof}
    Based on the structure of $M$ and the definition of
    matrix 1-norm, we have
    \begin{equation}
        \norm{M}_1 \leq \max_{j} \max_i \norm{W_{j,i}}_1 = \max_{j,i}
        \max_{t^{B^{L_\xi}_j}_k} \sum_{\xi_{k'}^{A^{L_t}_i}} \abs{
        \Kfun{ \xi_{k'}^{A^{L_t}_i} \cdot t^{B^{L_\xi}_j}_k} } = r.
    \end{equation}
    Based on the structure of $M$ and the definition of
    matrix $\infty$-norm, we have
    \begin{equation}
        \norm{M}_\infty \leq \max_j \max_i \norm{W_{j,i}}_\infty =
        \max_{j,i} \max_{t^{A^{L_t}_i}_k} \sum_{t_{k}^{B^{L_\xi}_j}}
        \abs{ \Kfun{ \xi_{k'}^{A^{L_t}_i} \cdot t^{B^{L_\xi}_j}_k}
        } = r.
    \end{equation}
\end{proof}

\begin{lemma} \label{lem:H1}
    Let $H^{(\ell)}$ be the block diagonal matrix defined at recursion
    layer $\ell = 1, \dots, L_t$, then
    \begin{equation*}
        \norm{H^{(\ell)}}_1 \leq 2\left( \LebConst{} \right),
    \end{equation*}
    where $r$ is the number of Chebyshev points.
\end{lemma}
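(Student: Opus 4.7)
The plan is to reduce the bound on $\|H^{(\ell)}\|_1$ to a column-sum estimate on a single Chebyshev Lagrange basis, via the block-diagonal structure of $H^{(\ell)}$ given in Section~\ref{sec:bnetMatrix}. Since $H^{(\ell)}$ is block diagonal with blocks $H^{(\ell)}_i$, I have $\|H^{(\ell)}\|_1 = \max_i \|H^{(\ell)}_i\|_1$, so it suffices to bound a single diagonal block.

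First, I would unpack the block structure for the two regimes. For $\ell \leq L_{\min}$, the block $H^{(\ell)}_{\lfloor i/2\rfloor}$ is built from block-diagonal copies of $W^{(\ell),2\lfloor i/2\rfloor}$ stacked on top of block-diagonal copies of $W^{(\ell),2\lfloor i/2\rfloor + 1}$, so every column of $H^{(\ell)}_{\lfloor i/2\rfloor}$ hits entries from exactly two $W$-blocks (one per child $C \succ B$). For $L_{\min} < \ell \leq L_t$, the structure simplifies and each column touches entries from a single $W^{(\ell),i}$.

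Next, I would compute the column sum of an individual $W^{(\ell),i}$. Fixing a column index $(c,s)$ (i.e., a Chebyshev point $t_s^c$ in a child domain $C$), the column entries are
\begin{equation*}
    W^{(\ell),i}_{k,c,s} = \Kfun{\xi_0^i \cdot (t_s^c - t_k)}\, \calL_k(t_s^c), \qquad 1 \leq k \leq r.
\end{equation*}
Since $|\Kfun{\cdot}| = 1$, the column sum in absolute value equals $\sum_{k=1}^r |\calL_k(t_s^c)|$. The Chebyshev point $t_s^c$ lies in $C \subset B$, so by rescaling to $[-1/2,1/2]$ this is a point in the interpolation domain of the Lagrange basis $\{\calL_k\}$ on $B$, and Lemma~\ref{lem:Lebesgue} gives $\sum_k |\calL_k(t_s^c)| \leq \Lambda_r = \frac{2}{\pi}\ln r + 1$.

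Finally, combining with the structural observation, each column of $H^{(\ell)}_{\lfloor i/2\rfloor}$ is the concatenation of (at most) two such columns from distinct $W^{(\ell),i'}$ blocks, so the full column sum is bounded by $2\Lambda_r$. The regime $L_{\min} < \ell \leq L_t$ only produces one child contribution per column and gives the strictly smaller bound $\Lambda_r$, which is subsumed. Taking the max over blocks yields $\|H^{(\ell)}\|_1 \leq 2\Lambda_r$, proving the claim. The only delicate step is keeping the block indexing consistent between parent/child and mixing/non-mixing channels; there is no analytic obstacle beyond the Lebesgue-constant bound, which is already assumed.
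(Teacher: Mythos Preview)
Your proof is correct and follows essentially the same approach as the paper: reduce to a single diagonal block via block-diagonality, bound each column of $W^{(\ell),i}$ by $\Lambda_r$ using $|\Kfun{\cdot}|=1$ together with Lemma~\ref{lem:Lebesgue}, and pick up the factor of $2$ from the vertical stacking in $H^{(\ell)}_{\lfloor i/2\rfloor}$. One minor labeling slip worth fixing: the two stacked $W$-blocks in $H^{(\ell)}_{\lfloor i/2\rfloor}$ correspond to the two \emph{frequency} children $A^\ell_{2\lfloor i/2\rfloor}$ and $A^\ell_{2\lfloor i/2\rfloor+1}$ of $A^{\ell-1}_{\lfloor i/2\rfloor}$, not to time-domain children $C \succ B$ (the child index $c$ already sits inside each $W^{(\ell),i}$ as a column index), but this does not affect the computation or the bound.
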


\begin{proof}
    The building block of $H^{(\ell)}$ is $W^{(\ell)}_i$, whose 1-norm
    is bounded as,
    \begin{equation}
        \norm{W^{(\ell)}_i}_1 \leq \max_{t \in B^{L-\ell}_1}
        \sum_{t_k^{B^{L-\ell}_1}} \abs{ \Kfun{\xi_0^{A^\ell_i}
        \cdot (t-t^{B^{L-\ell}_1}_k)}\calL_k(t) } \leq \LebConst{}.
    \end{equation}

    For two ranges of $\ell$, i.e., $\ell \leq L_{\min}$ and $L_{\min} <
    \ell \leq L_t$, we have,
    \begin{equation}
        \norm{H^{\ell}}_1 = \max_i \norm{H^{(\ell)}_i}_1
        \leq 2 \norm{W^{(\ell)}_i}_1
        \leq 2 \left( \LebConst{} \right).
    \end{equation}
\end{proof}

\begin{lemma} \label{lem:G1}
    Let $G^{(\ell)}$ be the block diagonal matrix defined at recursion
    layer $\ell = L_t+1, \dots, L_t + L_\xi$, then
    \begin{equation*}
        \norm{G^{(\ell)}}_1 \leq 2r \left( \LebConst{} \right),
    \end{equation*}
    where $r$ is the number of Chebyshev points.
\end{lemma}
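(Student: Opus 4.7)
The plan is to mirror the template of Lemmas~\ref{lem:U}--\ref{lem:H1}: exploit the block-diagonal structure of $G^{(\ell)}$, reduce the $1$-norm estimate to that of a single transferred-coefficient matrix $W^{(\ell),j}_c$, and then bound column sums using the modulus-one property of the Fourier kernel together with Corollary~\ref{cor:Lebesgue}.

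First I would use $G^{(\ell)} = \mathrm{diag}(G^{(\ell)}_0,\dots,G^{(\ell)}_{2^{L-\ell}-1})$ to reduce to $\norm{G^{(\ell)}}_1 = \max_j \norm{G^{(\ell)}_j}_1$. From the two-horizontal-block description of $G^{(\ell)}_j$ given in Section~\ref{sec:bnetMatrix}, each column of $G^{(\ell)}_j$ lies either in the left half (where it intersects exactly one diagonal copy of $W^{(\ell),j}_0$) or in the right half (where it intersects exactly one diagonal copy of $W^{(\ell),j}_1$). Hence $\norm{G^{(\ell)}_j}_1 = \max_{c\in\{0,1\}}\norm{W^{(\ell),j}_c}_1$, and the problem collapses to bounding the $1$-norm of a single $2r\times r$ matrix.

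The substantive estimate is then on $W^{(\ell),j}_c$, whose $((k,a),s)$ entry, under the \textit{Butterfly} initialization prescribed at the recursion step $\ell = L_t+1,\dots,L_t+L_\xi$, equals $\Kfun{(\xi_k^a-\xi_s)\cdot t_0^{2j+c}}\calL_s(\xi_k^a)$. Since $|\Kfun{\cdot}|=1$, the $s$-th column sum reduces to $\sum_{a=0}^{1}\sum_{k=1}^{r}|\calL_s(\xi_k^a)|$; applying the pointwise bound $|\calL_s(x)|\le \frac{2}{\pi}\ln r + 1$ from Corollary~\ref{cor:Lebesgue} to each of the $2r$ terms yields $\norm{W^{(\ell),j}_c}_1 \le 2r(\frac{2}{\pi}\ln r + 1)$, and combining with the previous reductions gives the claim.

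The only mild subtlety is that Corollary~\ref{cor:Lebesgue} is the correct tool here rather than Lemma~\ref{lem:Lebesgue}: the Lagrange index $s$ is held fixed while the evaluation points $\xi_k^a$ vary with $k$, so the column sum is not a Lebesgue sum and one must fall back on the pointwise magnitude bound, paying a factor of $r$. The extra factor of $2$ then reflects the two child frequency intervals $A^\ell_0$ and $A^\ell_1$ that both contribute rows to $W^{(\ell),j}_c$ (indexed by $a\in\{0,1\}$).
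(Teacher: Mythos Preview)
Your proof is correct and follows essentially the same approach as the paper: reduce via the block-diagonal structure to $\max_j\|G^{(\ell)}_j\|_1$, then via the two-half layout to the $1$-norm of a single $2r\times r$ block $W^{(\ell),j}_c$, and finally bound each of the $2r$ column entries by the pointwise Lagrange estimate from Corollary~\ref{cor:Lebesgue}. Your write-up is in fact somewhat cleaner than the paper's, particularly in isolating the reduction $\|G^{(\ell)}_j\|_1=\max_{c}\|W^{(\ell),j}_c\|_1$ and in explaining why Corollary~\ref{cor:Lebesgue} rather than Lemma~\ref{lem:Lebesgue} is the relevant tool here.
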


\begin{proof}
    Based on the structure of $G^{(\ell)}$ and the definition of
    matrix 1-norm, we have
    \begin{equation}
        \begin{split}
            \norm{G^{\ell}}_1 &  = \max_j \norm{G^{(\ell)}_j}_1
            \leq \max_{j} \norm{W^{(\ell)}_{j}}_1 \\
            & \leq \max_j \max_{k}  \left(
            \sum_{\xi_{k'}^{A^{\ell-1}_1}} \abs{
            \calL_k(\xi_{k'}^{A^{\ell-1}_1}) }
            + \sum_{\xi_{k'}^{A^{\ell-1}_2}} \abs{
            \calL_k(\xi_{k'}^{A^{\ell-1}_2}) } \right) \leq 2r \left(
            \LebConst{} \right). \\
        \end{split}
    \end{equation}
\end{proof}

\begin{lemma} \label{lem:Hinf}
    Let $H^{(\ell)}$ be the block diagonal matrix defined at recursion
    layer $\ell = 1, \dots, L_t$, then
    \begin{equation*}
        \norm{H^{(\ell)}}_\infty \leq 2r \left( \LebConst{} \right),
    \end{equation*}
    where $r$ is the number of Chebyshev points.
\end{lemma}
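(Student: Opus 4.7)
The plan is to mirror the structure of the proof of Lemma~\ref{lem:H1}, but with the roles of rows and columns swapped. Since $H^{(\ell)} = \mathrm{diag}(H^{(\ell)}_0, \dots)$, we have
\begin{equation*}
\norm{H^{(\ell)}}_\infty = \max_i \norm{H^{(\ell)}_i}_\infty,
\end{equation*}
so the task reduces to bounding $\norm{H^{(\ell)}_i}_\infty$, and for this it suffices to understand the $\infty$-norm of a single building block $W^{(\ell),i}$ whose entries are $\Kfun{\xi_0^{A^\ell_i}\cdot(t_s^c - t_k)}\,\calL_k(t_s^c)$, with rows indexed by the new Chebyshev nodes $t_k\in B^{L-\ell}_0$ and columns indexed by the pair $(c,s)$ with $c\in\{0,1\}$ and $t_s^c$ a Chebyshev node in the child $C_c$.

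Next I would bound $\norm{W^{(\ell),i}}_\infty$. Fixing a row $k$, the row sum equals
\begin{equation*}
\sum_{c=0,1}\sum_{s=1}^r \abs{\Kfun{\xi_0^{A^\ell_i}\cdot(t_s^c-t_k)}}\,\abs{\calL_k(t_s^c)} \;=\; \sum_{c=0,1}\sum_{s=1}^r \abs{\calL_k(t_s^c)},
\end{equation*}
since each complex exponential has modulus one. The decisive difference from Lemma~\ref{lem:H1} is that we are now summing values of a single Lagrange polynomial at the $2r$ evaluation points, rather than summing the Lebesgue function $\sum_k |\calL_k(\cdot)|$ at a single point. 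Applying Corollary~\ref{cor:Lebesgue} pointwise (after rescaling the child domains $C_c \subset B$ to $[-1,1]$, which leaves the Lagrange polynomial bounds invariant) gives $\abs{\calL_k(t_s^c)} \le \Lambda_r = \tfrac{2}{\pi}\ln r + 1$ for every term, so the row sum is at most $2r\Lambda_r$ and thus $\norm{W^{(\ell),i}}_\infty \le 2r\Lambda_r$.

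Finally I would assemble the two cases of the block structure. For $\ell \le L_{\min}$, $H^{(\ell)}_{\floor{i/2}}$ is a vertical stacking of two block-diagonal matrices carrying $W^{(\ell),2\floor{i/2}}$ and $W^{(\ell),2\floor{i/2}+1}$; since the stacking does not combine rows, each row of $H^{(\ell)}_{\floor{i/2}}$ coincides with a single row of one of the two $W$'s, so its row sum is already bounded by $2r\Lambda_r$. For $L_{\min} < \ell \le L_t$, $H^{(\ell)}_i$ is block diagonal with identical diagonal blocks $W^{(\ell),i}$, and the same bound $2r\Lambda_r$ transfers directly. Taking the maximum over $i$ yields $\norm{H^{(\ell)}}_\infty \le 2r(\tfrac{2}{\pi}\ln r + 1)$, as claimed. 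The only subtlety to watch—and the main place one can slip—is distinguishing the Lebesgue-constant bound (sum over $k$ with $x$ fixed) from the pointwise Lagrange bound used here (sum over $x$ with $k$ fixed); once that distinction is clear, the rest is bookkeeping over the block layout.
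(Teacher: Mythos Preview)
Your proposal is correct and follows essentially the same approach the paper intends: the paper states that the proof of Lemma~\ref{lem:Hinf} follows that of Lemma~\ref{lem:G1}, which fixes a single Lagrange index $k$, sums $|\calL_k|$ over the $2r$ child-domain evaluation points, and bounds each term by $\Lambda_r$ via Corollary~\ref{cor:Lebesgue}. Your handling of the two block-structure cases and the rescaling remark are accurate, and your explicit observation that one needs the pointwise Lagrange bound rather than the Lebesgue-function bound is precisely the distinction that makes the $\infty$-norm case parallel to Lemma~\ref{lem:G1} rather than to Lemma~\ref{lem:H1}.
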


\begin{lemma} \label{lem:Ginf}
    Let $G^{(\ell)}$ be the block diagonal matrix defined at recursion
    layer $\ell = L_t + 1, \dots, L_t + L_\xi$, then
    \begin{equation*}
        \norm{G^{(\ell)}}_\infty \leq 2 \left( \LebConst{} \right),
    \end{equation*}
    where $r$ is the number of Chebyshev points.
\end{lemma}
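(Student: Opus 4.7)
My plan is to mirror the argument in Lemma~\ref{lem:G1}, but exchange the roles of rows and columns so as to compute a maximum row sum instead of a maximum column sum. Concretely, I would first exploit the block-diagonal structure of $G^{(\ell)}$ to reduce the problem: since $G^{(\ell)} = \operatorname{diag}(G^{(\ell)}_0,\ldots,G^{(\ell)}_{2^{L-\ell}-1})$, we have $\|G^{(\ell)}\|_\infty = \max_j \|G^{(\ell)}_j\|_\infty$, and it suffices to bound a single block.

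Next I would analyze the internal structure of $G^{(\ell)}_j$. Reading off the explicit form from Section~\ref{sec:bnetMatrix}, each row of $G^{(\ell)}_j$ has nonzero entries appearing only inside one row of $W^{(\ell),j}_0$ and one row of $W^{(\ell),j}_1$. Therefore
\[
\|G^{(\ell)}_j\|_\infty \;\le\; \|W^{(\ell),j}_0\|_\infty + \|W^{(\ell),j}_1\|_\infty.
\]
This is the transpose counterpart of the column bound used in Lemma~\ref{lem:G1}: there the copies of $W^{(\ell),j}_c$ repeated along a row contributed a factor of $r$ from the Chebyshev-point sum, whereas here the $\infty$-norm only sees the two blocks that meet a single row, losing the $r$ factor.

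Then I would bound $\|W^{(\ell),j}_c\|_\infty$ directly from the \textit{Butterfly} initialization. Entries of $W^{(\ell),j}_c$ take the form $\Kfun{(\xi_k^a-\xi_s)\cdot t_0^{2j+c}}\calL_s(\xi_k^a)$, the Fourier factor has unit modulus, and for a row indexed by $(k,a)$ the row sum equals $\sum_s |\calL_s(\xi_k^a)|$ where $\{\calL_s\}_{s=1}^r$ are the Lagrange basis polynomials on the parent Chebyshev nodes $\{\xi_s\} \subset A^{\ell-1}_0$. Since $\xi_k^a \in A^\ell_a \subset A^{\ell-1}_0$, Lemma~\ref{lem:Lebesgue} (after the affine rescaling onto $[-1,1]$ that the Lebesgue constant is invariant under) gives $\sum_s |\calL_s(\xi_k^a)| \le \Lambda_r = \tfrac{2}{\pi}\ln r + 1$. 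Thus $\|W^{(\ell),j}_c\|_\infty \le \Lambda_r$, and combining with the previous step yields $\|G^{(\ell)}\|_\infty \le 2\Lambda_r$, which is exactly the asserted bound.

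The only subtlety, and the one place I would spend care on, is the ``two pieces per row'' claim for $G^{(\ell)}_j$: one must check from the index bookkeeping $\lambda^{(\ell)}(i,j,k)=\sum_{s,c}W^{(\ell),j,a}_{k,c,s}\lambda^{(\ell-1)}(\lfloor i/2\rfloor,2j+c,s)$ that fixing the output row $(i,k)$ selects exactly one $a = i\bmod 2$ and both $c=0,1$, producing precisely one row of $W^{(\ell),j}_0$ and one row of $W^{(\ell),j}_1$ and no further replication. Once this combinatorial statement is verified, the rest of the argument is a direct translation of Lemma~\ref{lem:G1} from column sums to row sums.
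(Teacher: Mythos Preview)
Your proposal is correct and matches the paper's approach: the paper simply states that the proof of Lemma~\ref{lem:Ginf} follows that of Lemma~\ref{lem:H1}, which is precisely the row-sum argument you carry out---reducing to a single block $G^{(\ell)}_j$, observing each row meets exactly one row of $W^{(\ell),j}_0$ and one of $W^{(\ell),j}_1$, and bounding each row sum $\sum_s |\calL_s(\xi_k^a)|$ by the Lebesgue constant via Lemma~\ref{lem:Lebesgue}. The combinatorial check you flag is indeed immediate from the displayed block structure of $G^{(\ell)}_j$ in Section~\ref{sec:bnetMatrix}.
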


The proofs of Lemma~\ref{lem:Hinf} and Lemma~\ref{lem:Ginf} follow that
of Lemma~\ref{lem:G1} and Lemma~\ref{lem:H1} respectively.

\begin{proof} [Proof of Theorem~\ref{thm:matrix-approx}]
    Since all bias weights are initialized as zero, and extensive assign
    is applied together with ReLU activation, \NetName{} with {\it
    Butterfly} initialization is equivalent to multiplying the matrices
    in matrix representation together, i.e.,
    \begin{equation}
        \calB(x) = U G^{(L)} \cdot G^{(L_t+1)} M H^{(L_t)} \cdot H^{(1)}
        Vx.
    \end{equation}
    We then write the exact matrix product with error matrices
    $E^{(\ell)}$,
    \begin{equation}
        \calK = E^{(L+1)} + U \left[ E^{(L)} + G^{(L)} \left[ \cdots +
        G^{(L_t+1)} \left[ E^{(S)} + M \left[ E^{(L_t)} + H^{(L_t)}
        \left[ \cdots + H^{(1)} \left[ E^{(0)} + V \right] \right]
        \right] \right] \right] \right]
    \end{equation}
    Then, following properties of matrix $p$-norm, we have,
    \begin{equation} \label{eq:error-long}
        \begin{split}
            \varepsilon_p & = \norm{ \calK - U G^{(L)} \cdots G^{(L_t+1)}
            M H^{(L_t)} \cdots H^{(1)} V }_p \\
            & \leq  \norm{E^{(L+1)}}_p + \norm{U E^{(L)}}_p + \cdots +
            \norm{U G^{(L)} \cdots G^{(L_t+1)} E^{(S)}}_p + \cdots \\
            & \qquad + \norm{U G^{(L)} \cdots G^{(L_t+1)} M H^{(L_t)} \cdots
            H^{(1)} E^{(0)}}_p \\
            & \leq \norm{E^{(L+1)}}_p + \norm{U}_p \norm{E^{(L)}}_p
            + \cdots + \norm{U}_p \left( \prod_{\ell =
            L_t+1}^L\norm{G^{(\ell)}}_p \right) \norm{E^{(S)}}_p +
            \cdots \\
            & \qquad + \norm{U}_p \left( \prod_{\ell=L_1+1}^L
            \norm{G^{(\ell)}}_p \right) \norm{M}_p \left( \prod_{\ell =
            1}^{L_t} \norm{H^{(\ell)}}_p \right) \norm{E^{(0)}}_p 
        \end{split}
    \end{equation}
    for $1 \leq p \leq \infty$.

    When matrix 1-norm is used, we adopt Lemma~\ref{lem:U},
    Lemma~\ref{lem:M}, Lemma~\ref{lem:H1}, and Lemma~\ref{lem:G1}
    to bound $\varepsilon_1$ as,
    \begin{equation} \label{eq:error-result-1}
        \varepsilon_1 \leq \delta_1 \left( 1 + m \Lambda_r
        \sum_{\ell = 0}^{L_\xi} (2r\Lambda_r)^\ell + m r\Lambda_r
        (2r\Lambda_r)^{L_\xi} \sum_{\ell = 0}^{L_t} (2\Lambda_r)^\ell
        \right) \leq m r^{L_\xi+1} (2 \Lambda_r)^{L+2} \delta_1,
    \end{equation}
    where the second inequality adopt the fact that $r>1$ and
    $\Lambda_r = \LebConst() > 1$, $\delta_1$ is a uniform upper bound
    for $\norm{E^{(\ell)}}_1$.  Theorem~\ref{thm:low-rank} provides an
    upper bound for each entry of $E^{(\ell)}$. Recalling the vector
    length ($\sharp \{i\} \cdot \sharp\{j\} \cdot r$) and product of
    domain lengths from Table~\ref{tab:complementary-domain-pairs},
    the uniform upper bound for the 1-norm follows,
    \begin{equation}
        \delta_1 = \max_\ell \norm{E^{(\ell)}_1}_1 \leq \pi e K \left(
        1 + \Lambda_r \right) \left( \frac{ \pi e K }{ r 2^{L_\xi +
        L_{\min} + 1}} \right)^{r-1},
    \end{equation}
    where the assumption $\pi e K \leq r 2^{\min(\log K, L)}$ is
    applied.

    When matrix $\infty$-norm is used, we adopt Lemma~\ref{lem:U},
    Lemma~\ref{lem:M}, Lemma~\ref{lem:Hinf}, and Lemma~\ref{lem:Ginf}
    to bound $\varepsilon_\infty$ as,
    \begin{equation} \label{eq:error-result-inf}
        \varepsilon_\infty \leq \delta_\infty \left( 1 + \Lambda_r
        \sum_{\ell = 0}^{L_\xi} (2\Lambda_r)^\ell + r\Lambda_r
        (2\Lambda_r)^{L_\xi} \sum_{\ell = 0}^{L_t} (2r\Lambda_r)^\ell
        \right) \leq r^{L_t+1} (2 \Lambda_r)^{L+2} \delta_\infty,
    \end{equation}
    where $\delta_\infty$ is a uniform upper bound for
    $\norm{E^{(\ell)}}_\infty$ and obeys the same upper bound as
    $\delta_1$.

    Applying Riesz-Thorin interpolation theorem together with
    \eqref{eq:error-result-1} and \eqref{eq:error-result-inf}, we obtain
    the error bound under the matrix $p$-norm,
    \begin{equation} \label{eq:matrix-approx-r}
        \varepsilon_p \leq m^{\frac{1}{p}} r^{L_t (1-\frac{1}{p})
        + L_\xi \frac{1}{p}+1} \left(2\Lambda_r\right)^{L+3} \frac{
        K(\pi e)^r }{r^{r-1}},
    \end{equation}
    for any $1 \leq p \leq \infty$.

    If we further assume $L \leq \log K$, then $2^{L_\xi + L_{\min}} =
    2^L$ and the matrix $p$-norm error bound can be rederived in
    terms of $L$, i.e.,
    \begin{equation} \label{eq:matrix-approx-L}
        \varepsilon_p \leq C_{r,K} \left( \frac{\Lambda_r}{2^{r-2}}
        \right)^L r^{L_t (1-\frac{1}{p})
        + L_\xi \frac{1}{p}},
    \end{equation}
    where $C_{r,K} = (2\Lambda_r)^3 \frac{(\pi eK)^r}{(2r)^{r-1}}$
    is a constant depending on $r$ and $K$, and independent of $L$.

    For any bounded vector $\vec{f}$, Theorem~\ref{thm:matrix-approx}
    is the direct result of \eqref{eq:matrix-approx-r} and
    \eqref{eq:matrix-approx-L}.
\end{proof}

\section{Numerical Results}
\label{sec:experiment}

We present four numerical experiments to demonstrate the approximation
power with or without training for \NetName{} and \INetName{}. The
first numerical experiment shows that the approximation error
without training of an {\it Butterfly} initialized \NetName{} decays
exponentially as the increases of the network depth $L$, which verifies
Theorem~\ref{thm:matrix-approx}. Then, through the second experiment,
we show that the training of \NetName{} and \INetName{} further refine
the approximation error. The approximation error after training
depends on both the properties of the function and the dataset. In
the third experiment, we apply both \NetName{}s and \INetName{}s to
testing datasets with different distribution comparing to the training
datasets and compare the transfer learning capabilities.  In the last
experiment, \NetName{} and \INetName{} with additional task layers,
one with single fully connect layer and another with square layer,
are tested and compared in the approximation of the energy functionals
of Poisson's equation. All algorithms are implemented using Tensorflow
2.1.0~\cite{tensorflow} and can be found on the authors' homepages.

\subsection{Approximation Power Without Training}

The first numerical experiment in this section aims to verify the
exponential decay of the approximation error of the \NetName{} as
the depth $L$ increases. We construct a \NetName{} to approximate
the discrete Fourier kernel with fixed number of Chebyshev points,
$r = 8$. The \NetName{} is filled with {\it Butterfly} initialization
weights. The input vector in this example is of size $N = 1024$ and
various output vector sizes are tested. The output vector represents
integer frequency of the input function in the frequency domain
$[0, K)$. The approximation error of the \NetName{} is measured
against the dense discrete Fourier kernel matrix and relative matrix
$p$-norm error is reported, $\epsilon_p = \nicefrac{\norm{\calK -
\calB}_p}{\norm{\calK}_p}$, where $\calB$ denotes \NetName{}.

\begin{table}[htp]
    \centering
    \begin{tabular}{cccccccccc}
        \toprule
        \multicolumn{5}{c}{$K = 64$}
        & \multicolumn{5}{c}{$K=256$} \\
        \cmidrule(r){1-5}
        \cmidrule(r){6-10}
        $L$ & $L_\xi$ & $\epsilon_1$ & $\epsilon_2$ &
        $\epsilon_\infty$ &
        $L$ & $L_\xi$ & $\epsilon_1$ & $\epsilon_2$ &
        $\epsilon_\infty$ \\
        \toprule
        \multirow{3}{*}{4}
        &  1 & 2.06\np{e-}1 & 2.46\np{e-}1 & 2.56\np{e-}1 
        &
        \multirow{3}{*}{6}
        &  1 & 2.52\np{e-}1 & 3.40\np{e-}1 & 2.82\np{e-}1 
        \\
        &  2 & 2.02\np{e-}1 & 2.60\np{e-}1 & 2.66\np{e-}1 
        &
        &  2 & 2.51\np{e-}1 & 3.45\np{e-}1 & 2.89\np{e-}1 
        \\
        &  3 & 1.90\np{e-}1 & 2.89\np{e-}1 & 2.72\np{e-}1 
        &
        &  3 & 2.46\np{e-}1 & 3.60\np{e-}1 & 2.95\np{e-}1 
        \\
        \midrule
        \multirow{3}{*}{5}
        &  1 & 1.79\np{e-}3 & 2.56\np{e-}3 & 2.31\np{e-}3 
        &
        \multirow{3}{*}{7}
        &  1 & 2.03\np{e-}3 & 3.40\np{e-}3 & 2.44\np{e-}3 
        \\
        &  2 & 1.69\np{e-}3 & 2.32\np{e-}3 & 1.84\np{e-}3 
        &
        &  2 & 1.97\np{e-}3 & 3.33\np{e-}3 & 2.01\np{e-}3 
        \\
        &  3 & 1.61\np{e-}3 & 2.16\np{e-}3 & 1.94\np{e-}3 
        &
        &  3 & 1.91\np{e-}3 & 3.15\np{e-}3 & 2.11\np{e-}3 
        \\
        \midrule
        \multirow{3}{*}{6}
        &  1 & 9.21\np{e-}6 & 1.30\np{e-}5 & 1.94\np{e-}5 
        &
        \multirow{3}{*}{8}
        &  1 & 1.15\np{e-}5 & 2.01\np{e-}5 & 2.00\np{e-}5 
        \\
        &  2 & 8.90\np{e-}6 & 1.33\np{e-}5 & 1.76\np{e-}5 
        &
        &  2 & 1.13\np{e-}5 & 2.04\np{e-}5 & 1.82\np{e-}5 
        \\
        &  3 & 8.65\np{e-}6 & 1.49\np{e-}5 & 1.70\np{e-}5 
        &
        &  3 & 1.10\np{e-}5 & 2.07\np{e-}5 & 1.77\np{e-}5 
        \\
        \bottomrule
    \end{tabular}
    \caption{Approximation accuracy of the Fourier kernel by the
    \NetName{}. The input vector size is $N=1024$, the number of
    Chebyshev points is $r=8$, and frequency domain is $[0,K)$.}
    \label{tab:DFT-result}
\end{table}

Table~\ref{tab:DFT-result} shows for both choices of $K$, the relative
approximation errors measured in 1-norm, 2-norm, and $\infty$-norm
decay exponentially as $L$ increases and stay constant for different
$L_\xi$. The decay factors for different $K$ remain similar, while
the prefactor is larger for large $K$. All of these observations
agree with the error bound in Theorem~\ref{thm:matrix-approx}.

\subsection{Approximation Power After Training}
\label{sec:numerical-approx-power}

The second numerical experiment in this
section aims to demonstrate the approximation power after training
of \NetName{}.

{\bf Dataset setup.} Both the training and testing datasets are
generated as follows. We first generate an array of $N$ random complex
number in the frequency domain with each number sampled uniformly
from $[-1,1)$. Then, we multiply the array by a Gaussian function
centered at $G_{\mathrm{center}}$ and width $G_{\mathrm{width}}$
(the standard deviation of the Gaussian function). The input data
$\vec{x}$ is then the real part of the inverse discrete Fourier
transform of the array.  Given a frequency window $[k_0, k_0+K)$,
the output data is the discrete Fourier transform of $\vec{x}$
restricted to the frequency window.

\begin{table}[htp]
    \centering
    \begin{tabular}{lccc}
        \toprule
        Short Name & $G_{\mathrm{center}}$ & $G_{\mathrm{width}}$ &
        Freq Window \\
        \toprule
        DFT-Lfreq & 0 & 500 & $[0, 128)$ \\
        DFT-Hfreq & 0 & 500 & $[256, 384)$ \\
        DFTSmooth-Lfreq & 0 & 10 & $[0, 128)$ \\
        DFTSmooth-Hfreq & 256 & 10 & $[256, 384)$ \\
        \bottomrule
    \end{tabular}
    \caption{Datasets setups and their short names.}
    \label{tab:datasets}
\end{table}

We perform numerical results on four groups of datasets as shown
in Table~\ref{tab:datasets} with their short names.  Notice that
DFT-Lfreq and DFT-Hfreq have the same input data distribution and
the corresponding widths are very large.  Hence the input data of
these two groups are close to white noise. However, DFTSmooth-Lfreq
and DFTSmooth-Hfreq have input data generated with Gaussian
centered at $0$ and $256$ with small width $10$. The input data
is then close to band limited signal around the given frequency
window, and in other words, data in dimension $N$ actually lie on
a low-dimensional subspace of dimension about $6G_{\mathrm{width}}
= 60$.  In Appendix~\ref{app:dataset-example}, we include one instance
for each datasets in Table~\ref{tab:datasets}.

{\bf Training and evaluation setup.} All \NetName{}s and \INetName{}s
with different initializations and frequency windows are trained under
the infinity data setting, i.e., training data is randomly generated on
the fly. The input data length is $N=1024$, the batch size is $256$,
the maximum number of iteration is $50,000$, and ADAM optimizer is
used with an exponentially decay learning rate. The initial learning
rate is $10^{-3}$ and $10^{-4}$ for random initialized neural networks
and {\it Butterfly} initialized ones respectively. The decay steps
and the decay rate are 100 and 0.985. The maximum number of iteration
is sufficient for the convergence of relative errors in all settings
(see Appendix~\ref{app:conv-behaviors} for examples of convergence
behaviors). The loss function is defined as,
\begin{equation}
    \ell (\{\vec{x}_i, \vec{y}_i\}) = \sum_{i} \norm{\calN(\vec{x}_i) -
    \vec{y}_i}_2^2,
\end{equation}
where $\vec{y}_i$ is the output data and $\calN$ denotes a neural
network. Relative errors are reported for comparison.  In the
following, the pre-training relative error is evaluated on the first
batch and the post-training relative error is evaluated on a testing
data of size 1000. Default values are used for other unspecified
hyper parameters.

\begin{table}[t]
    \centering
    \small
    \begin{tabular}{cccrcccccccc}
        \toprule
        & & & &
        \multicolumn{2}{c}{DFT-Lfreq} &
        \multicolumn{2}{c}{DFT-Hfreq} &
        \multicolumn{2}{c}{DFTSmooth-Lfreq} &
        \multicolumn{2}{c}{DFTSmooth-Hfreq} \\
        \cmidrule(lr){5-6}
        \cmidrule(lr){7-8}
        \cmidrule(lr){9-10}
        \cmidrule(lr){11-12}
        $L_\xi$ &
        \shortstack{Neural\\Network} &
        Initial &
        \shortstack{Num\\Paras} &
        \shortstack{Pre\\Train} &
        \shortstack{Post\\Train} &
        \shortstack{Pre\\Train} &
        \shortstack{Post\\Train} &
        \shortstack{Pre\\Train} &
        \shortstack{Post\\Train} &
        \shortstack{Pre\\Train} &
        \shortstack{Post\\Train} \\
        \toprule
        \multirow{4}{*}{1} &\multirow{2}{*}{\sNetName} & prefix &
          136304 
        & 1.9\np{e-}2 & 1.6\np{e-}4 
        & 1.9\np{e-}2 & 2.0\np{e-}4 
        & 1.9\np{e-}2 & 1.2\np{e-}5 
        & 2.0\np{e-}2 & 3.0\np{e-}5 
        \\
        & & random &
          136304 
        & 1.0\np{e-}0 & 1.7\np{e-}2 
        & 1.0\np{e-}0 & 2.0\np{e-}2 
        & 1.0\np{e-}0 & 8.8\np{e-}3 
        & 1.0\np{e-}0 & 1.3\np{e-}2 
        \\
        & \multirow{2}{*}{\sINetName} & prefix &
         3533936 
        & 1.9\np{e-}2 & 5.5\np{e-}5 
        & 1.9\np{e-}2 & 6.5\np{e-}5 
        & 1.9\np{e-}2 & 1.5\np{e-}4 
        & 1.9\np{e-}2 & 8.5\np{e-}5 
        \\
        & & random &
         3533936 
        & 1.0\np{e-}0 & 6.9\np{e-}1 
        & 1.0\np{e-}0 & 6.1\np{e-}1 
        & 1.0\np{e-}0 & 3.2\np{e-}1 
        & 1.0\np{e-}0 & 2.3\np{e-}1 
        \\
        \midrule
        \multirow{4}{*}{2} &\multirow{2}{*}{\sNetName} & prefix &
           87728 
        & 1.9\np{e-}2 & 8.4\np{e-}4 
        & 2.0\np{e-}2 & 9.7\np{e-}4 
        & 2.0\np{e-}2 & 1.4\np{e-}4 
        & 2.0\np{e-}2 & 4.9\np{e-}4 
        \\
        & & random &
           87728 
        & 1.0\np{e-}0 & 8.2\np{e-}2 
        & 1.0\np{e-}0 & 9.1\np{e-}2 
        & 1.0\np{e-}0 & 1.5\np{e-}2 
        & 1.0\np{e-}0 & 2.8\np{e-}2 
        \\
        & \multirow{2}{*}{\sINetName} & prefix &
          915120 
        & 1.9\np{e-}2 & 3.0\np{e-}4 
        & 1.9\np{e-}2 & 4.2\np{e-}4 
        & 2.0\np{e-}2 & 5.1\np{e-}5 
        & 2.0\np{e-}2 & 1.6\np{e-}4 
        \\
        & & random &
          915120 
        & 1.0\np{e-}0 & 5.6\np{e-}1 
        & 1.0\np{e-}0 & 5.8\np{e-}1 
        & 1.0\np{e-}0 & 1.6\np{e-}1 
        & 1.0\np{e-}0 & 1.6\np{e-}1 
        \\
        \midrule
        \multirow{4}{*}{3} &\multirow{2}{*}{\sNetName} & prefix &
           66608 
        & 2.2\np{e-}2 & 1.3\np{e-}3 
        & 2.2\np{e-}2 & 1.4\np{e-}3 
        & 2.2\np{e-}2 & 4.1\np{e-}4 
        & 2.2\np{e-}2 & 6.3\np{e-}4 
        \\
        & & random &
           66608 
        & 1.0\np{e-}0 & 9.4\np{e-}2 
        & 1.0\np{e-}0 & 9.9\np{e-}2 
        & 1.0\np{e-}0 & 1.6\np{e-}2 
        & 1.0\np{e-}0 & 3.4\np{e-}2 
        \\
        & \multirow{2}{*}{\sINetName} & prefix &
          275504 
        & 2.2\np{e-}2 & 1.1\np{e-}3 
        & 2.2\np{e-}2 & 1.2\np{e-}3 
        & 2.2\np{e-}2 & 1.7\np{e-}4 
        & 2.2\np{e-}2 & 2.4\np{e-}4 
        \\
        & & random &
          275504 
        & 1.0\np{e-}0 & 2.1\np{e-}1 
        & 1.0\np{e-}0 & 2.7\np{e-}1 
        & 1.0\np{e-}0 & 3.1\np{e-}2 
        & 1.0\np{e-}0 & 6.3\np{e-}2 
        \\
        \bottomrule
    \end{tabular}
    \caption{Numerical results of \NetName{}s (\sNetName{})
    and \INetName{}s (\sINetName{}) on DFT-Lfreq, DFT-Hfreq,
    DFTSmooth-Lfreq, and DFTSmooth-Hfreq datasets. Prefix
    initialization refers to the {\it Butterfly} initialization
    defined in Section~\ref{sec:bnet}. Pre-training and post-training
    relative errors are reported for each dataset. Both \NetName{}
    and \INetName{} use 16 mixing channels ($r=4$) and $L=8$ layers.}
    \label{tab:approx-dft}
\end{table}

\begin{figure}[t]
    \centering
    \begin{subfigure}[b]{0.48\textwidth}
        \centering
        \includegraphics[width=\linewidth]{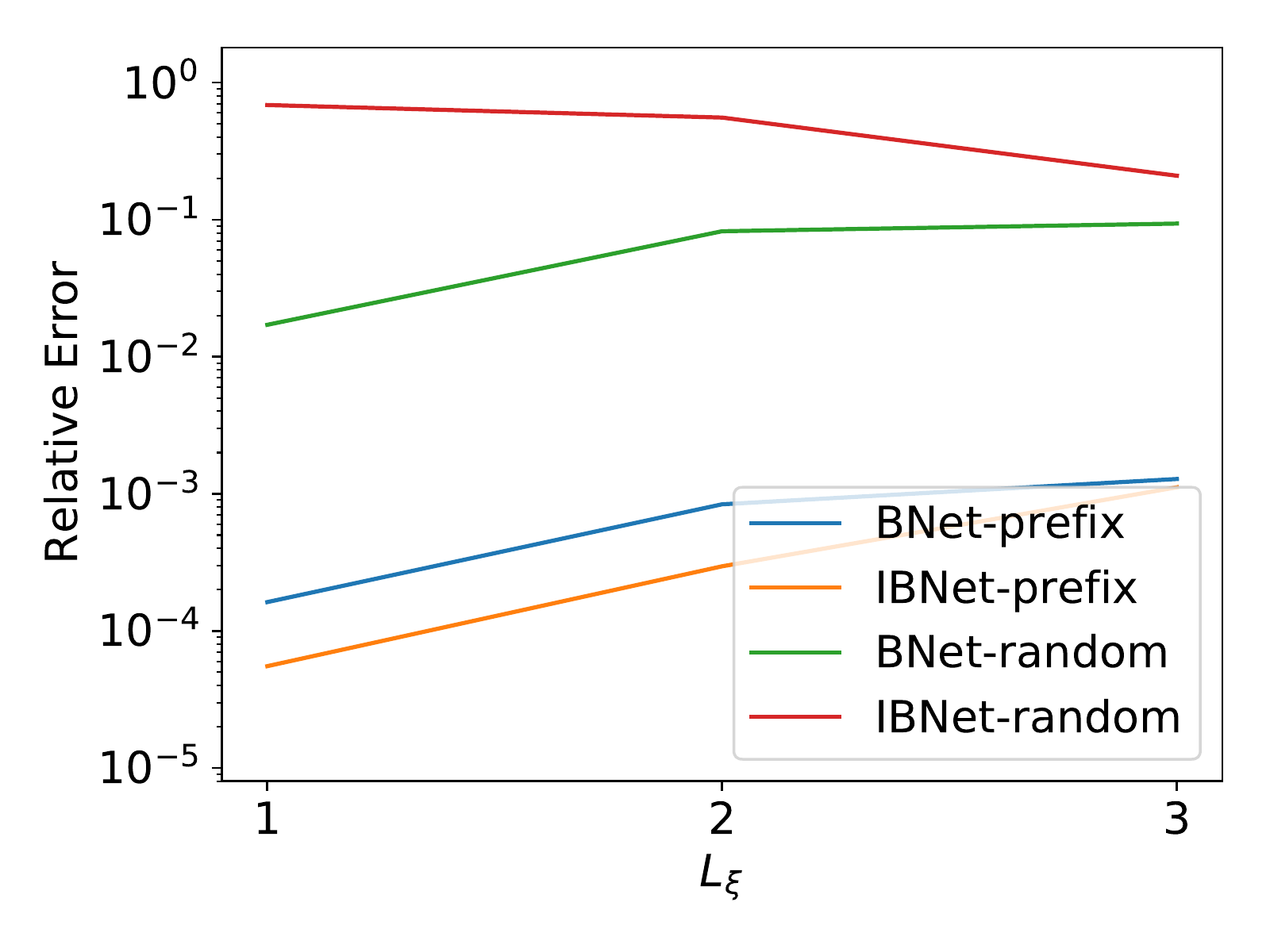}
        \caption{DFT-Lfreq.}
    \end{subfigure}
    \quad
    \begin{subfigure}[b]{0.48\textwidth}
        \centering
        \includegraphics[width=\linewidth]{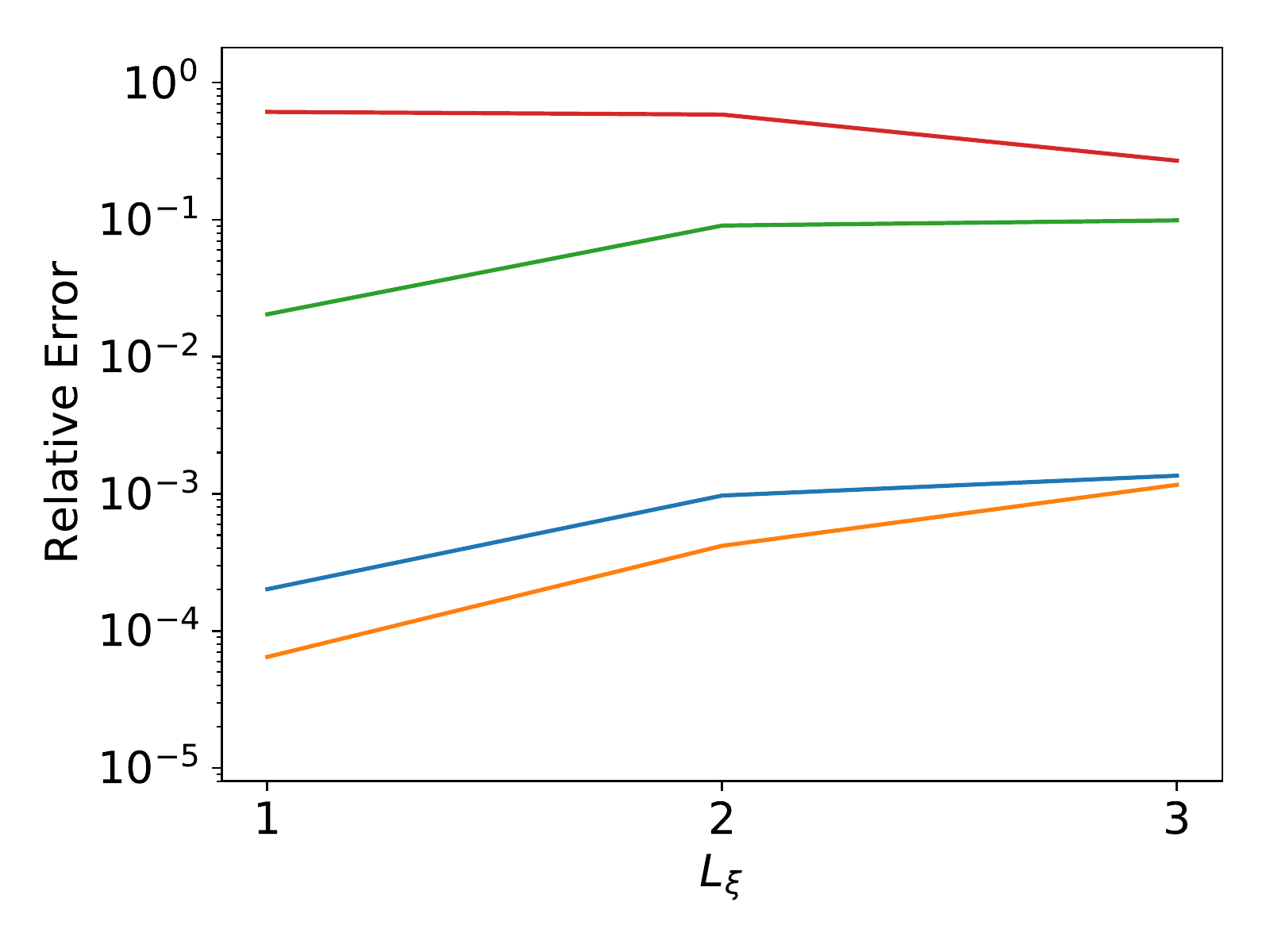}
        \caption{DFT-Hfreq.}
    \end{subfigure}
    \begin{subfigure}[b]{0.48\textwidth}
        \centering
        \includegraphics[width=\linewidth]{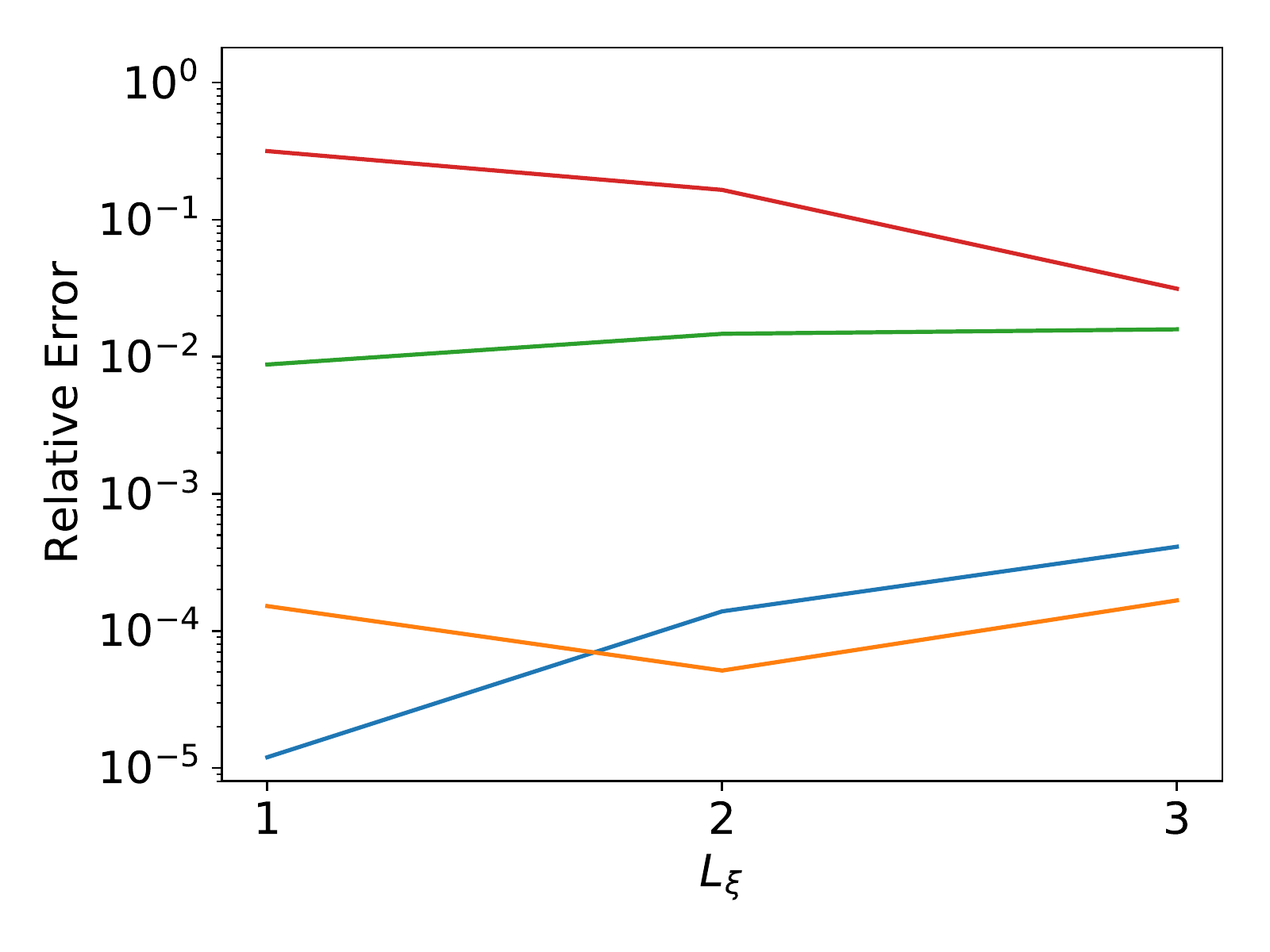}
        \caption{DFTSmooth-Lfreq.}
    \end{subfigure}
    \quad
    \begin{subfigure}[b]{0.48\textwidth}
        \centering
        \includegraphics[width=\linewidth]{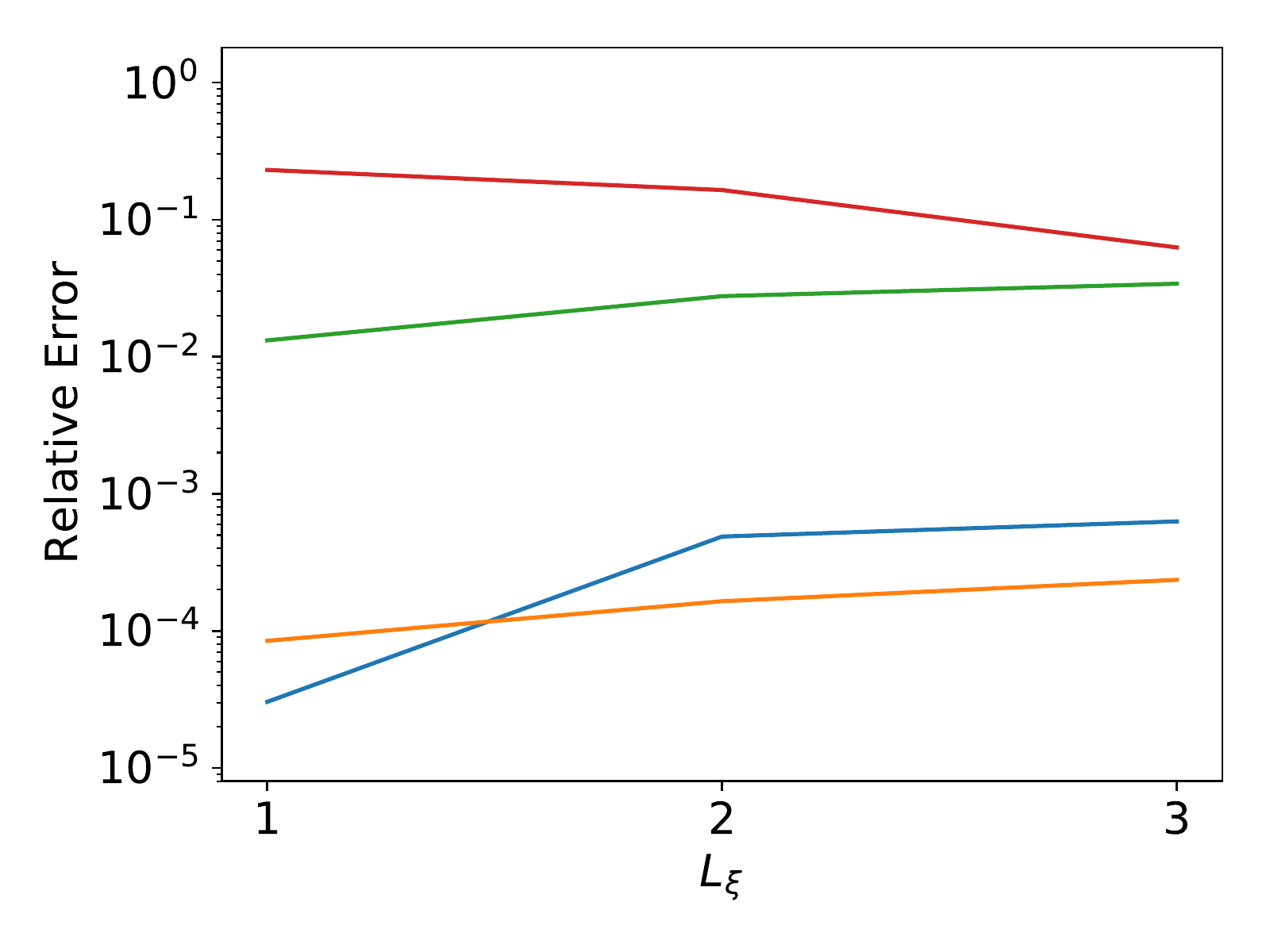}
        \caption{DFTSmooth-Hfreq.}
    \end{subfigure}
    \caption{Post-training relative error against $L_\xi$ for all
    datasets. $L_\xi$ indicates the position of the switch layer. All
    four figures share the same legend as (a).} \label{fig:testerr}
\end{figure}

{\bf Results.} Table~\ref{tab:approx-dft} reports the result on
all four datasets. Figure~\ref{fig:testerr} further illustrates the
post-training relative errors against $L_\xi$, where $L_\xi$ indicates
the position of the switch layer. We summarize the experimental
observations as follows.
\begin{enumerate}[(1)]

\item (Weight initialization, random v.s. prefix) In all cases,
    training from {\it Butterfly} initialization significantly
    outperforms training from random initialization by two to three
    digits. In almost all cases, the post-training relative errors of
    the randomly initialized networks are not even as accurate as the
    pre-training relative error of the {\it Butterfly} initialized
    counterparts.

\item (Channel sparsity, \NetName{} v.s. \INetName{}) Given a fixed
    $L_\xi$, \NetName{} has much less parameters than
    \INetName{}. While, their post-training relative errors stay at a
    similar level. For these datasets, channel sparsification reduces
    the number of parameters without loss much post-training accuracy.

\item (Influence of datasets)
    Given the same input data and different output frequency windows,
    as in DFT-Lfreq and DFT-Hfreq, the post-training relative errors
    of high frequency output are slightly less accurate than their
    low-frequency counterparts.  The impact of output frequency
    window on the training performance is limited. Given the same
    output frequency window and different input data, i.e., DFT-Lfreq
    v.s. DFTSmooth-Lfreq, and DFT-Hfreq v.s.  DFTSmooth-Hfreq, the
    post-training relative errors on datasets with smoother input data
    are half digit to one digit more accurate than that on datasets
    with less smooth input data.

\item (Position of switch layer, $L_\xi$)
    The number of parameters increases as $L_\xi$ decreases for
    both \NetName{} and \INetName{}.  In general, the post-training
    relative error increases as $L_\xi$ increases for all \NetName{}
    results. However, for \INetName{} with random initialization,
    the post-training relative error decreases as $L_\xi$ increases on
    all datasets.

\end{enumerate}

We further give a discussion on the above results. The comparison of
random initialization and {\it Butterfly} initialization indicates
that the trainings reach two different local minima.  The relative
error associated with {\it Butterfly} initialization is much smaller.
Numerically, we also found that if we set the learning rate for {\it
Butterfly} initialized training to be large then the training loss
first increases and then converges to a number worse than the results
given in the Table~\ref{tab:approx-dft}. Hence, we conjecture that
the energy landscape of this task is nasty and the {\it Butterfly}
initialization lies in a narrow but deep well. Training from {\it
Butterfly} initialization with small learning rate achieves the local
minima within this narrow but deep well.  Regarding the influence
of datasets, the intrinsic dimensionality of the input data plays
a crucial role in training.  If no guidance of feature selection is
provided on an intrinsically high-dimensional data, e.g., \INetName{}
with $L_\xi=1$ and random initialization applied to DFT-Lfreq, the
training fails to learn useful information.  Adding more guidance,
either refining network structure (increasing $L_\xi$, adding channel
sparsification) or providing {\it Butterfly} initialization, helps
training and achieves lower post-training relative error.  However,
if the intrinsic dimension of the input data is lowered, the training
learns more information and achieves lower relative error. Providing
extra guidance further helps training. For the position of switch
layer, we can further decrease $L_\xi$ to zero and end up with the
neural network proposed in \cite{Xu2019}. We emphasize that when
$L_\xi=0$, the \INetName{} is indeed a regular CNN. More numerical
results and connections between \INetName{} and regular CNN refer
to \cite{Xu2019}.

\subsection{Comparison to CNN and Transfer Learning}

This numerical experiment is to compare the \NetName{} and \INetName{}
on their transfer learning capability.  We adopt the neural networks
trained on DFTSmooth-Lfreq in previous section here. A sequence of
testing datasets are generated in the same way as in previous section
with $G_{\mathrm{center}} = 0, 2, \dots, 44$ and $G_{\mathrm{width}}
= 10$. Each testing dataset contains 1000 samples.

\begin{figure}[htp]
    \centering
    \begin{subfigure}[b]{0.48\textwidth}
        \centering
        \includegraphics[width=\linewidth]{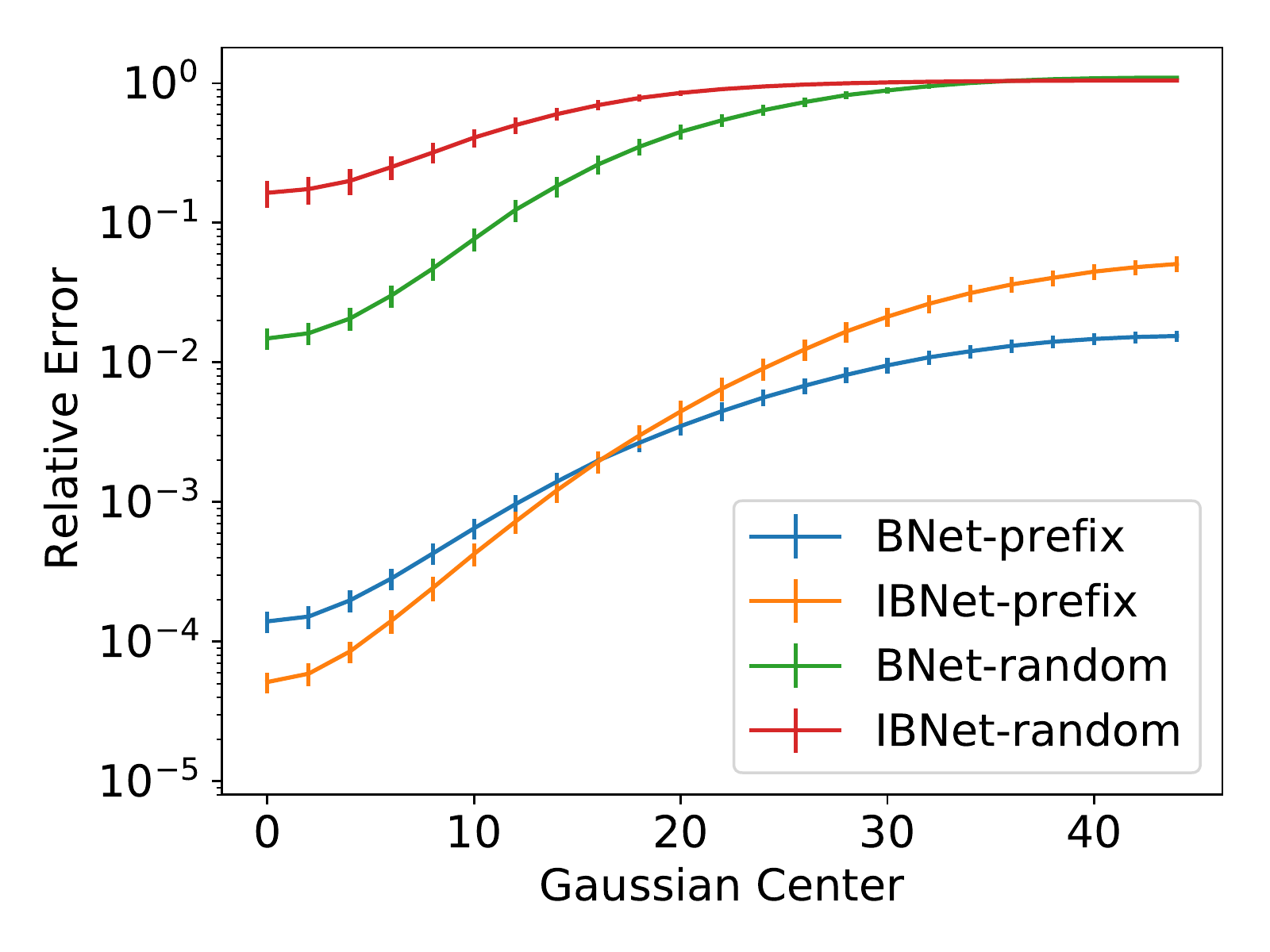} \caption{$L_\xi = 2$.}
    \end{subfigure}
    \quad
    \begin{subfigure}[b]{0.48\textwidth}
        \centering
        \includegraphics[width=\linewidth]{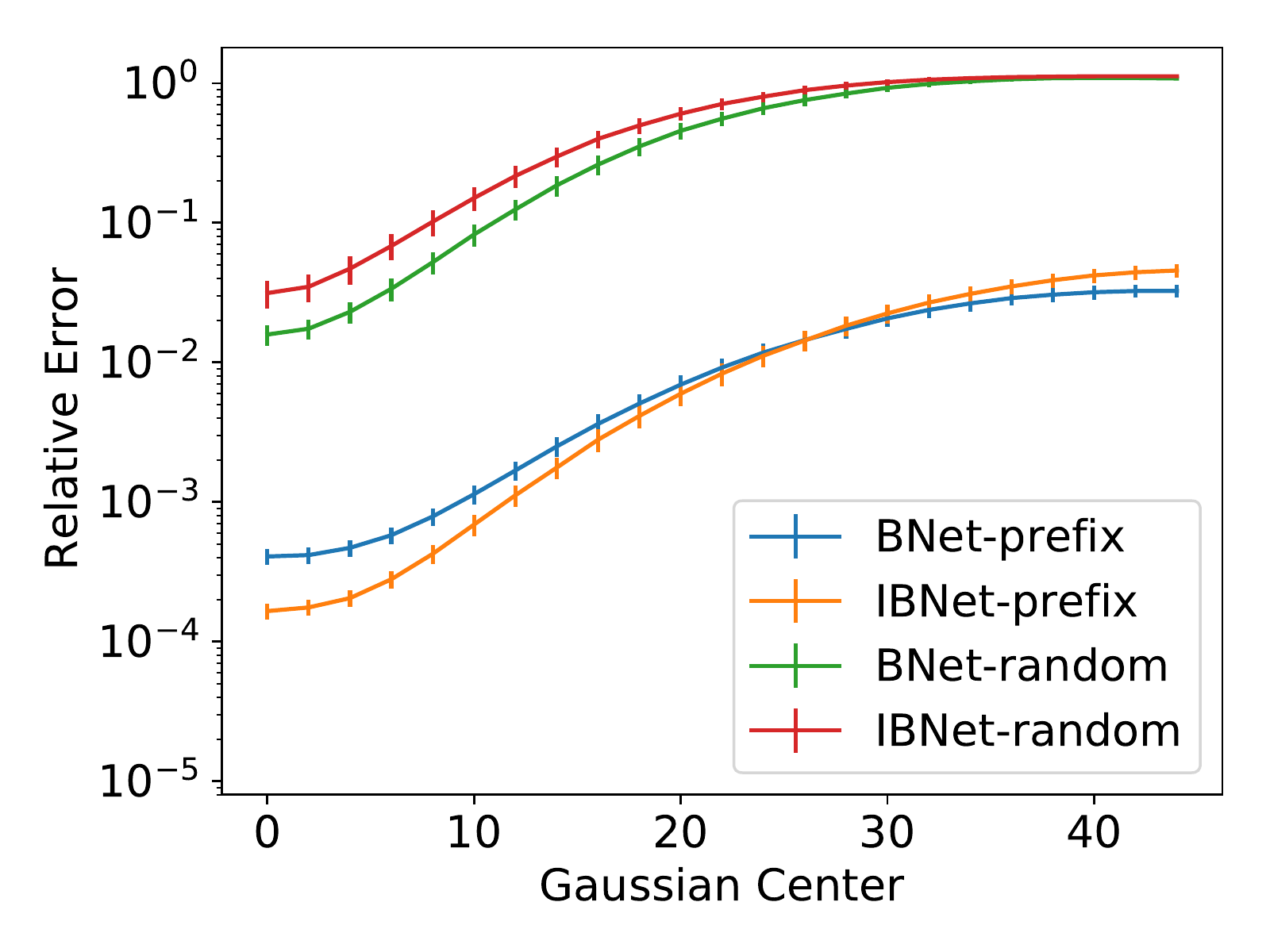}
        \caption{$L_\xi = 3$.}
    \end{subfigure}
    \caption{Mean and standard deviation for testing relative errors
    of \NetName{} and \INetName{} on testing datasets with various
    $G_{\mathrm{center}} = 0, 2, \dots, 44$ and $G_{\mathrm{width}}
    = 10$. All networks are trained on DFTSmooth-Lfreq dataset
    ($G_{\mathrm{center}} = 0$ and $G_{\mathrm{width}} = 10$).}
    \label{fig:trans}
\end{figure}

Figure~\ref{fig:trans} shows the means and standard deviations of the
transferred testing relative errors of neural networks with $L_\xi=2$
and $L_\xi=3$. For every transferred testing, the standard deviation
is orders of magnitude smaller than the corresponding mean. Hence the
variation of the testing relative error for each setting is relatively
small. Regarding the transfer learning capabilities, all randomly
initialized networks quick loss accuracy as the testing dataset shifted
away from the training dataset.  While, {\it Butterfly} initialized
neural networks still preserve reasonable level of accuracy.  Further,
comparing \NetName{} and \INetName{} both with {\it Butterfly}
initialization, \INetName{} achieves lower testing relative error when
the testing dataset has significant overlap with the training dataset.
However, when the overlap reduces, the testing relative error of
\INetName{} increase faster than that of \NetName{}.  Hence we conclude
that \NetName{} has better generalizability than that of \INetName{}.
Further, comparing the case of $L_\xi=2$ and $L_\xi=3$, we notice
that the crossover of two testing relative error curves comes later
in $L_\xi=3$. This is because that the \INetName{} with $L_\xi=3$
has much less number of parameters than that of \INetName{} with
$L_\xi=2$, hence is less adapted to training dataset after training.

\subsection{General Function Approximation}
\label{sec:energyLaplace}

The last numerical example aims to construct an approximation of
the energy functional of 1D Poisson's equation and another analog
functional for high frequency input data, both of which correspond
directly to the approximation power in Section~\ref{sec:approx-main}
in representing general functions.

{\bf Functional setup.} For a Poisson's equation $\Delta u(t) = x(t)$
with periodic boundary condition, the energy functional of Poisson's
equation is defined as the negative inner product of $u$ and $x$,
which can also be approximated by a quadratic form of the leading
low-frequency Fourier components,
\begin{equation} \label{eq:energyfunc1}
    \calE_1(x) = - \left<x,u\right> \approx \sum_{k \in (-K,K)}
    \frac{1}{\abs{k}^2} \abs{\widehat{x}_k}^2 = \sum_{k \in [1,K)}
    \frac{2}{\abs{k}^2} \abs{\widehat{x}_k}^2,
\end{equation}
where $\widehat{x}_k$ is the Fourier component of $x$ at frequency $k$
and the last equality comes from the assumption of real input $x$.  If
the input function $x$ is a band limited function within the frequency
window, $(-K, K)$, then equality is achieved in \eqref{eq:energyfunc1}.
The other analogy functional for input data $x$ is defined as,
\begin{equation} \label{eq:energyfunc2}
    \calE_2(x) = \sum_{k \in (K_0,K_0+K)} \frac{2}{\abs{k-K_0}^2}
    \abs{\widehat{x}_k}^2.
\end{equation}
Throughout this section, $K_0$ is 256 and $K$ is 128.
The input data is generated in the same way as that in
Section~\ref{sec:numerical-approx-power}. The centers of Gaussian
are $G_{\mathrm{center}} = 0$ and $G_{\mathrm{center}} = 256$ for
$\calE_1$ and $\calE_2$ respectively. The widths of the Gaussian for
both functionals are $G_{\mathrm{width}} = 30$.

{\bf Neural network setup.} Task layers are attached to \NetName{} and
\INetName{}. We have two different task layers, namely square-sum-layer
and dense-dense-layer.  The square-sum-layer first squares all
output of the previous layers in \NetName{} or \INetName{}, then
multiplies each squared value by a weight, and finally sums them
together. This is equivalent to square the output and then connect
a single dense layer with one output unit. The square-sum-layer is
able to exactly represent both functionals, \eqref{eq:energyfunc1} and
\eqref{eq:energyfunc2} if the weights of the dense layer is properly
initialized. The dense-dense-layer attaches a dense layer with 256
output units with both bias and ReLU activation function enabled.
Then another dense layer with one output unit is attached afterwards.
Both functionals can only be approximated by the dense-dense-layer.

{\bf Training and evaluation setup.}
All \NetName{}s and \INetName{}s with different initializations are
trained under the infinity data setting. The input data length is
$N=1024$, the batch size is 256, the maximum number of iteration
is $50,000$, and ADAM optimizer is used with an exponentially
decay learning rate. The initial learning rate is $10^{-4}$ for {\it
Butterfly} initialized networks with square-sum-layer, and is $10^{-3}$
for all other settings.  The decay steps and the decay rate are 100
and 0.985 respectively. Again, the maximum number of iteration is
sufficient for the convergence of relative errors in all settings.
The loss function is defined as,
\begin{equation}
    \ell (\{\vec{x}_i\}) = \sum_{i} \abs{\calN(\vec{x}_i) -
    \calE(\vec{x}_i)}^2,
\end{equation}
for $\calE$ being either $\calE_1$ or $\calE_2$. Relative errors are
reported for comparison. The testing data is of size 1000. Default
values are used for other unspecified hyper parameters.

\begin{table}[t]
    \centering
    \small
    \begin{tabular}{lccccccccc}
        \toprule
        \multirow{2}{*}{Task Layer} & \multirow{2}{*}{$L_\xi$} &
        \multirow{2}{*}{\shortstack{Neural\\Network}}
        & \multirow{2}{*}{Initial}
        & \multicolumn{2}{c}{Num Paras}
        & \multicolumn{2}{c}{Functional $\calE_1$}
        & \multicolumn{2}{c}{Functional $\calE_2$} \\
        \cmidrule(lr){5-6}
        \cmidrule(lr){7-8}
        \cmidrule(lr){9-10}
        & & & & \sNetName{}/\sINetName{} & Task
        & \shortstack{Pre\\Train} & \shortstack{Post\\Train}
        & \shortstack{Pre\\Train} & \shortstack{Post\\Train} \\
        \toprule
        \multirow{12}{*}{Square-sum-layer}
        & \multirow{4}{*}{1} &\multirow{2}{*}{\sNetName} & prefix &
          136304 &      256 
        & 1.80\np{e-}2 & 2.35\np{e-}5 
        & 1.84\np{e-}2 & 3.01\np{e-}5 
        \\
        & & & random &
          136304 &      256 
        & 1.00\np{e-}0 & 7.62\np{e-}3 
        & 1.00\np{e-}0 & 1.85\np{e-}2 
        \\
        & & \multirow{2}{*}{\sINetName} & prefix &
         3533936 &      256 
        & 1.69\np{e-}2 & 1.78\np{e-}4 
        & 1.80\np{e-}2 & 1.92\np{e-}4 
        \\
        & & & random &
         3533936 &      256 
        & 1.00\np{e-}0 & 5.80\np{e-}3 
        & 1.00\np{e-}0 & 9.48\np{e-}3 
        \\
        \cmidrule(lr){2-10}
        & \multirow{4}{*}{2} &\multirow{2}{*}{\sNetName} & prefix &
           87728 &      256 
        & 1.79\np{e-}2 & 4.36\np{e-}5 
        & 1.72\np{e-}2 & 5.84\np{e-}5 
        \\
        & & & random &
           87728 &      256 
        & 1.00\np{e-}0 & 9.62\np{e-}3 
        & 1.00\np{e-}0 & 1.74\np{e-}2 
        \\
        & & \multirow{2}{*}{\sINetName} & prefix &
          915120 &      256 
        & 1.78\np{e-}2 & 6.33\np{e-}5 
        & 1.66\np{e-}2 & 7.30\np{e-}5 
        \\
        & & & random &
          915120 &      256 
        & 1.00\np{e-}0 & 8.71\np{e-}3 
        & 1.00\np{e-}0 & 1.02\np{e-}2 
        \\
        \cmidrule(lr){2-10}
        & \multirow{4}{*}{3} &\multirow{2}{*}{\sNetName} & prefix &
           66608 &      256 
        & 1.50\np{e-}2 & 9.51\np{e-}5 
        & 1.55\np{e-}2 & 2.27\np{e-}4 
        \\
        & & & random &
           66608 &      256 
        & 1.00\np{e-}0 & 1.46\np{e-}2 
        & 1.00\np{e-}0 & 4.04\np{e-}2 
        \\
        & & \multirow{2}{*}{\sINetName} & prefix &
          275504 &      256 
        & 1.59\np{e-}2 & 5.22\np{e-}5 
        & 1.69\np{e-}2 & 1.17\np{e-}4 
        \\
        & & & random &
          275504 &      256 
        & 1.00\np{e-}0 & 1.15\np{e-}2 
        & 1.00\np{e-}0 & 2.38\np{e-}2 
        \\
        \toprule
        \multirow{12}{*}{Dense-dense-layer}
        & \multirow{4}{*}{1} &\multirow{2}{*}{\sNetName} & prefix &
          136304 &    66048 
        & 1.00\np{e-}0 & 7.44\np{e-}3 
        & 9.99\np{e-}1 & 9.30\np{e-}3 
        \\
        & & & random &
          136304 &    66048 
        & 1.00\np{e-}0 & 1.27\np{e-}2 
        & 1.00\np{e-}0 & 2.26\np{e-}2 
        \\
        & & \multirow{2}{*}{\sINetName} & prefix &
         3533936 &    66048 
        & 9.99\np{e-}1 & 4.11\np{e-}3 
        & 1.00\np{e-}0 & 5.84\np{e-}3 
        \\
        & & & random &
         3533936 &    66048 
        & 1.00\np{e-}0 & 7.67\np{e-}3 
        & 1.00\np{e-}0 & 1.44\np{e-}2 
        \\
        \cmidrule(lr){2-10}
        & \multirow{4}{*}{2} &\multirow{2}{*}{\sNetName} & prefix &
           87728 &    66048 
        & 9.97\np{e-}1 & 8.37\np{e-}3 
        & 1.00\np{e-}0 & 1.18\np{e-}2 
        \\
        & & & random &
           87728 &    66048 
        & 1.00\np{e-}0 & 2.30\np{e-}2 
        & 1.00\np{e-}0 & 2.19\np{e-}2 
        \\
        & & \multirow{2}{*}{\sINetName} & prefix &
          915120 &    66048 
        & 1.00\np{e-}0 & 5.74\np{e-}3 
        & 9.99\np{e-}1 & 7.80\np{e-}3 
        \\
        & & & random &
          915120 &    66048 
        & 1.00\np{e-}0 & 1.11\np{e-}2 
        & 1.00\np{e-}0 & 2.12\np{e-}2 
        \\
        \cmidrule(lr){2-10}
        & \multirow{4}{*}{3} &\multirow{2}{*}{\sNetName} & prefix &
           66608 &    66048 
        & 9.99\np{e-}1 & 9.83\np{e-}3 
        & 1.00\np{e-}0 & 1.01\np{e-}2 
        \\
        & & & random &
           66608 &    66048 
        & 1.00\np{e-}0 & 1.66\np{e-}2 
        & 1.00\np{e-}0 & 2.75\np{e-}2 
        \\
        & & \multirow{2}{*}{\sINetName} & prefix &
          275504 &    66048 
        & 9.98\np{e-}1 & 6.84\np{e-}3 
        & 9.97\np{e-}1 & 9.00\np{e-}3 
        \\
        & & & random &
          275504 &    66048 
        & 1.00\np{e-}0 & 1.21\np{e-}2 
        & 1.00\np{e-}0 & 3.55\np{e-}2 
        \\
        \bottomrule
    \end{tabular}
    \caption{Numerical results of \NetName{}s and \INetName{}s with
    various task layers for functional \eqref{eq:energyfunc1} and
    \eqref{eq:energyfunc2}. Pre-training and post-training relative
    errors are reported for each functional.  All neural networks
    use 16 mixing channels and $L=8$ layers. } \label{tab:energyfunc1}
\end{table}

{\bf Result.}
Table~\ref{tab:energyfunc1} show the results for functional
\eqref{eq:energyfunc1} and \eqref{eq:energyfunc2}. The comparison
of the number of parameters for \NetName{}/\INetName{} is the same
as that in Section~\ref{sec:numerical-approx-power}, while the
number of parameters in dense-dense-layer is much larger than that
in square-sum-layer. More accurate approximation is achieved using
square-sum-layer comparing to dense-dense-layer, which is due to
fact that the functionals can be exactly represented by the former
task layer but not the latter. The post-training relative errors for
\NetName{} and \INetName{} given the same initialization, $L_\xi$, and
task layer, remain similar in all cases. Hence the significant larger
number of parameters in \INetName{} does not improve the post-training
accuracy much. Most importantly, as we compare the post-training
relative errors of different initializations under dense-dense-layer,
{\it Butterfly} initialized networks achieves better accuracy comparing
to its random initialized counterpart. The dense-dense-layers here
are all randomly initialized and only the weights in \NetName{}
and \INetName{} are initialized differently. Hence we conclude
that {\it Butterfly} initialization, even just on part of the whole
neural network, helps finding better approximations in representing
functionals \eqref{eq:energyfunc1} and \eqref{eq:energyfunc2}.

\section{Conclusion and Discussion}
\label{sec:conclusion}

A low-complexity convolutional neural network with structured {\it
Butterfly} initialization and sparse cross-channel connections is
proposed, motivated by the {\it Butterfly} scheme. The functional
representation by \NetName{} is optimal in the sense that the model
complexity is $O(K \log N)$ and the computational complexity is $O(N
\log N)$ for $N$ and $K$ being the input and output vector lengths. The
approximation accuracy to the Fourier kernel is proved to exponentially
decay as the depths of the \NetName{} increases.  We also conduct an
approximation analysis of \NetName{} in representing a large class
of problems in scientific computing and image and signal processing.
Comparing \NetName{} to fully connected networks, the leading term
in network complexity is reduced from $\varepsilon^{-N/s}$ down to
$\varepsilon^{-K/s}$, where $N$ is the input dimension, $K$ is the
effective dimension, and $s$ is regularization level of the problem.
Regular CNN can be viewed a special network under the analysis.

The trained \NetName{}s from {\it Butterfly} initialization and
random initialization are applied to represent discrete Fourier
transforms and energy functionals. For these examples, \NetName{}
achieves better accuracy than its no-trained version. We also compared
\NetName{} against \INetName{}. From the numerical results, we find
that \NetName{} is able to achieve similar accuracy as \INetName{},
while the number of parameters is orders of magnitudes smaller. In
the transfer learning settings, \NetName{} generalizes better than
\INetName{} when the distribution of the input data has domain shift.

The work can be extended in several directions. First, more
applications of the \NetName{} can be explored such as those in
image analysis and signal processing. Likely, \NetName{} is able to
replace some CNN structures in practice such that similar accuracy
can be achieved while the parameter number is much reduced. Second,
our current theoretical analysis does not address the case when the
input data contain noise. In particular, adding rectified layers in
\NetName{} can be interpreted as a thresholding denoising operation
applied to the intermediate representations; a statistical analysis
is desired.

\subsubsection*{Acknowledgments}

The work of YL and JL is supported in part by National Science
Foundation via grants DMS-1454939 and ACI-1450280. 
XC is partially supported by NSF (DMS-1818945, DMS-1820827), NIH (grant R01GM131642) and the Alfred P. Sloan Foundation.

\bibliographystyle{plainnat}
\bibliography{library}

\appendix

\section{Proof of Theorem~\ref{thm:low-rank}}
\label{app:low-rank}

Here we first include a well-known lemma of Chebyshev interpolation
for completeness and then prove Theorem~\ref{thm:low-rank}.

\begin{lemma} \label{lem:cheb}
    Let $f(y) \in C_{[a,b]}$ and $\calP_r$ be the space spanned by
    the monomials $y^{r}$. The projection operator $\Pi_r$ mapping $f$
    into its Lagrange interpolation on the $r$ Chebyshev grid obeys,
    \begin{equation}
        \norm{f - \Pi_r f}_\infty \leq \left( 2 + \frac{2}{\pi}
        \ln r \right) \inf_{g \in \calP_r} \norm{f-g}_\infty.
    \end{equation}
\end{lemma}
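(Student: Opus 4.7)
The plan is to prove Lemma~\ref{lem:cheb} by the standard projection argument: since $\Pi_r$ is a linear projection onto the space of polynomials interpolated at $r$ Chebyshev nodes, it fixes every $g \in \calP_r$, hence for any such $g$ we can write
\begin{equation*}
f - \Pi_r f = (f - g) - \Pi_r (f - g),
\end{equation*}
and the triangle inequality in $\|\cdot\|_\infty$ gives
\begin{equation*}
\|f - \Pi_r f\|_\infty \le (1 + \|\Pi_r\|_\infty)\, \|f - g\|_\infty,
\end{equation*}
where $\|\Pi_r\|_\infty$ is the operator norm as a map $C_{[a,b]} \to C_{[a,b]}$. Taking the infimum over $g \in \calP_r$ yields
\begin{equation*}
\|f - \Pi_r f\|_\infty \le (1 + \|\Pi_r\|_\infty) \inf_{g \in \calP_r} \|f - g\|_\infty.
\end{equation*}

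Next I would identify $\|\Pi_r\|_\infty$ with the Lebesgue constant $\Lambda_r$. Since $(\Pi_r f)(y) = \sum_{k=1}^r f(z_k)\calL_k(y)$, we have $|(\Pi_r f)(y)| \le \|f\|_\infty \sum_{k=1}^r |\calL_k(y)|$, whence $\|\Pi_r\|_\infty \le \max_y \sum_k |\calL_k(y)| = \Lambda_r$, with equality achieved by choosing an $f$ that matches the signs of the Lagrange functions at the maximizing $y$. Substituting the classical bound $\Lambda_r \le \frac{2}{\pi}\ln r + 1$ for Chebyshev nodes (which is exactly Lemma~\ref{lem:Lebesgue}, proved in \cite{Rivlin1990}) into the previous display yields the prefactor $2 + \frac{2}{\pi}\ln r$, completing the argument.

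The only nontrivial ingredient is the logarithmic Lebesgue-constant bound for Chebyshev points; since the excerpt already cites this as a standard fact and invokes it in Lemma~\ref{lem:Lebesgue}, I would not reprove it but merely quote it. Everything else reduces to the two-line projection/triangle-inequality manipulation above, so there is no real obstacle — the lemma is stated precisely to package this boilerplate estimate for later use in bounding the interpolation error that drives Theorem~\ref{thm:low-rank}.
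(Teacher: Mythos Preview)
Your argument is correct and is the standard projection/Lebesgue-constant proof; the paper does not give its own proof of this lemma but simply cites \cite{Rivlin1990}, where exactly this argument appears. Nothing to add.
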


The proof of Lemma~\ref{lem:cheb} can be found in \cite{Rivlin1990}.

\begin{proof} [Proof of Theorem~\ref{thm:low-rank}]
    The Fourier kernel $\func{\calK}{\xi,t} = \Kfun{\xi \cdot t}$ can be
    decomposed as,
    \begin{equation} \label{eq:expan-K}
        \begin{split}
            \func{\calK}{\xi,t} & =  \Kfun{\left( \xi \cdot t - \xi_0
            \cdot t - \xi \cdot t_0 + \xi_0 \cdot t_0 \right)}
            \cdot \Kfun{\xi_0 \cdot t} \cdot \Kfun{\xi \cdot t_0}
            \cdot \negKfun{\xi_0 \cdot t_0} \\
            & =  \Kfun{R(\xi, t)}
            \cdot \Kfun{\xi_0 \cdot t} \cdot \Kfun{\xi \cdot t_0}
            \cdot \negKfun{\xi_0 \cdot t_0},
        \end{split}
    \end{equation}
    where $R(\xi,t) = \left(\xi - \xi_0\right) \cdot \left(t-
    t_0\right)$, $t_0$ and $\xi_0$ are centers of $B$ and $A$
    respectively.

    Next, we show the $r$-term truncation error for the first term
    in the second line of \eqref{eq:expan-K}. Based on the power
    expansion of $\Kfun{R(\xi,t)}$, i.e.,
    \begin{equation}
        \Kfun{R(\xi,t)} = \sum_{k=0}^\infty \frac{\left( - 2 \pi \imath
        R(\xi,t) \right)^k}{k!},
    \end{equation}
    the $r$-term truncation error can be bounded as,
    \begin{equation}
        \begin{split}
            \delta & = \abs{\Kfun{R(\xi,t)} - \sum_{k=0}^r \frac{\left(
            - 2 \pi \imath R(\xi,t) \right)^k )}{k!}} = \abs{
            \sum_{k=r+1}^\infty \frac{\left( - 2 \pi \imath R(\xi,t)
            \right)^k}{k!} } \\
            & \leq \sum_{k=r+1}^\infty \frac{(\pi w(A)w(B))^k} {2^k
            k!} \leq \sum_{k=r+1}^\infty \left( \frac{\pi e w(A)
            w(B)} {2k} \right)^k \leq \left( \frac{ \pi e w(A) w(B)
            }{2r} \right)^{r},
        \end{split}
    \end{equation}
    where the last inequality uses $w(A) w(B) \leq \frac{r}{\pi e}$.
    We also notice that, for any fixed $\xi$, $\sum_{k=0}^r \frac{ \left(
    2 \pi \imath R(\xi,\cdot) \right)^k }{k!} \in \calP_r$. Applying
    Lemma~\ref{lem:cheb}, we obtain,
    \begin{equation}
        \norm{\Kfun{R(\xi,t)} - \sum_{k = 1}^{r}\Kfun{R(\xi,t_k)}
        \calL_k(t) }_\infty \leq \left( 2 + \frac{2}{\pi} \ln r
        \right) \delta.
    \end{equation}
    By substituting the
    explicit expression of $R(\xi,t)$, we obtain one of the conclusion,
    \begin{equation}
        \norm{\Kfun{\xi \cdot t} - \sum_{k = 1}^{r}\Kfun{\xi \cdot
        t_k} \Kfun{\xi_0 \cdot \left( t - t_k \right)} \calL_k(t)}_\infty
        \leq \left( 2 + \frac{2}{\pi} \ln r \right) \left( \frac{
        \pi e w(A) w(B) }{2r} \right)^r,
    \end{equation}
    for any $\xi \in A$ and $t \in B$.
    
    Similarly, for any fixed $t$, we have $\sum_{k=0}^r \frac{ \left(
    2 \pi \imath R(\xi,t) \right)^k }{k!} \in \calP_r(\xi)$. Hence
    the second conclusion can be obtained through the same procedure.
\end{proof}

\section{Complex valued operation with ReLU activation function}
\label{app:complex-real}

A complex-valued linear operator can be represented through real-valued
operation with ReLU activation function.
Assume we want to represent the complex-valued linear operation,
\begin{equation}
    y = a x
\end{equation}
where $x,y \in \bbC$ can be generalized to vectors and $a \in \bbC$
can be generalized to a complex-valued matrix.  We first introduce
the real-valued representation for complex numbers/vectors. A complex
number $x = \Re x+\imath \Im x \in \bbC$ is represented as
\begin{equation} \label{eq:realx}
    \begin{pmatrix}
        (\Re x)_+ & (\Im x)_+ & (\Re x)_- & (\Im x)_-
    \end{pmatrix}^\top,
\end{equation}
where $(z)_+ = \max(z,0)$ and $(z)_- = -\min(z,0)$ for any $z \in
\bbR$. If $x$ is a complex vector of size $n$, then the real-valued
representation concatenates \eqref{eq:realx} for each complex value and
results a real vector of size $4n$.

The multiplication $y = ax$ is produced as the ReLU activation function
acting on a matrix vector multiplication, i.e.,
\begin{equation} \label{eq:exeq}
    \sigma \left(
    \begin{pmatrix}
        \Re a & -\Im a & -\Re a & \Im a \\
        \Im a & \Re a & -\Im a & -\Re a \\
        -\Re a & \Im a & \Re a & -\Im a \\
        -\Im a & -\Re a & \Im a & \Re a \\
    \end{pmatrix}
    \begin{pmatrix}
        (\Re x)_+ \\
        (\Im x)_+ \\
        (\Re x)_- \\
        (\Im x)_-
    \end{pmatrix}
    \right)
    =
    \begin{pmatrix}
        (\Re y)_+ \\
        (\Im y)_+ \\
        (\Re y)_- \\
        (\Im y)_-
    \end{pmatrix},
\end{equation}
where the complex-valued linear operator $a$ is extended to be a
real-valued $4 \times 4$ matrix. Extension of $a$ to a matrix can
also be done through concatenating in both row and column directions.

In order to simplify the description in this paper, we define
an extensive assign operator as $\exeq$ such that the 4 by 4 matrix $A$
in \eqref{eq:exeq} then obeys $A \exeq a$.

The above description can also be found in \cite{Xu2019}.

\section{Example signals of datasets}
\label{app:dataset-example}

We provide instance input and output data for each dataset in
Table~\ref{tab:datasets}, namely Figure~\ref{fig:data-DFT-Lfreq}
for DFT-Lfreq, Figure~\ref{fig:data-DFT-Hfreq} for DFT-Hfreq,
Figure~\ref{fig:data-DFTSmooth-Lfreq} for DFTSmooth-Lfreq, and
Figure~\ref{fig:data-DFTSmooth-Hfreq} for DFTSmooth-Hfreq.

\begin{figure}[htp]
    \centering
    \begin{subfigure}[b]{0.48\textwidth}
        \centering
        \includegraphics[width=\linewidth]{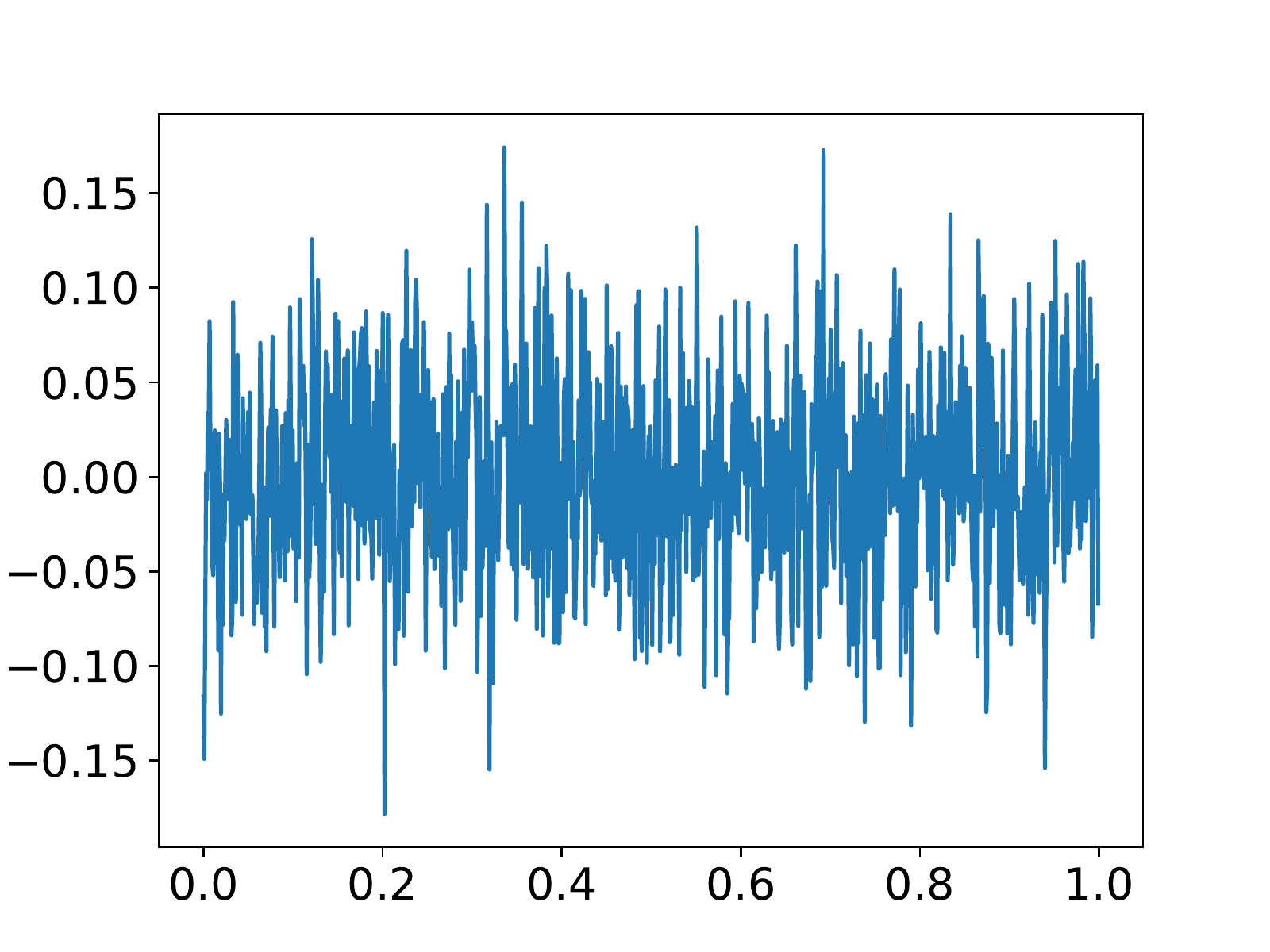}
        \caption{Input data $\vec{x}$.}
    \end{subfigure}
    \quad
    \begin{subfigure}[b]{0.48\textwidth}
        \centering
        \includegraphics[width=\linewidth]{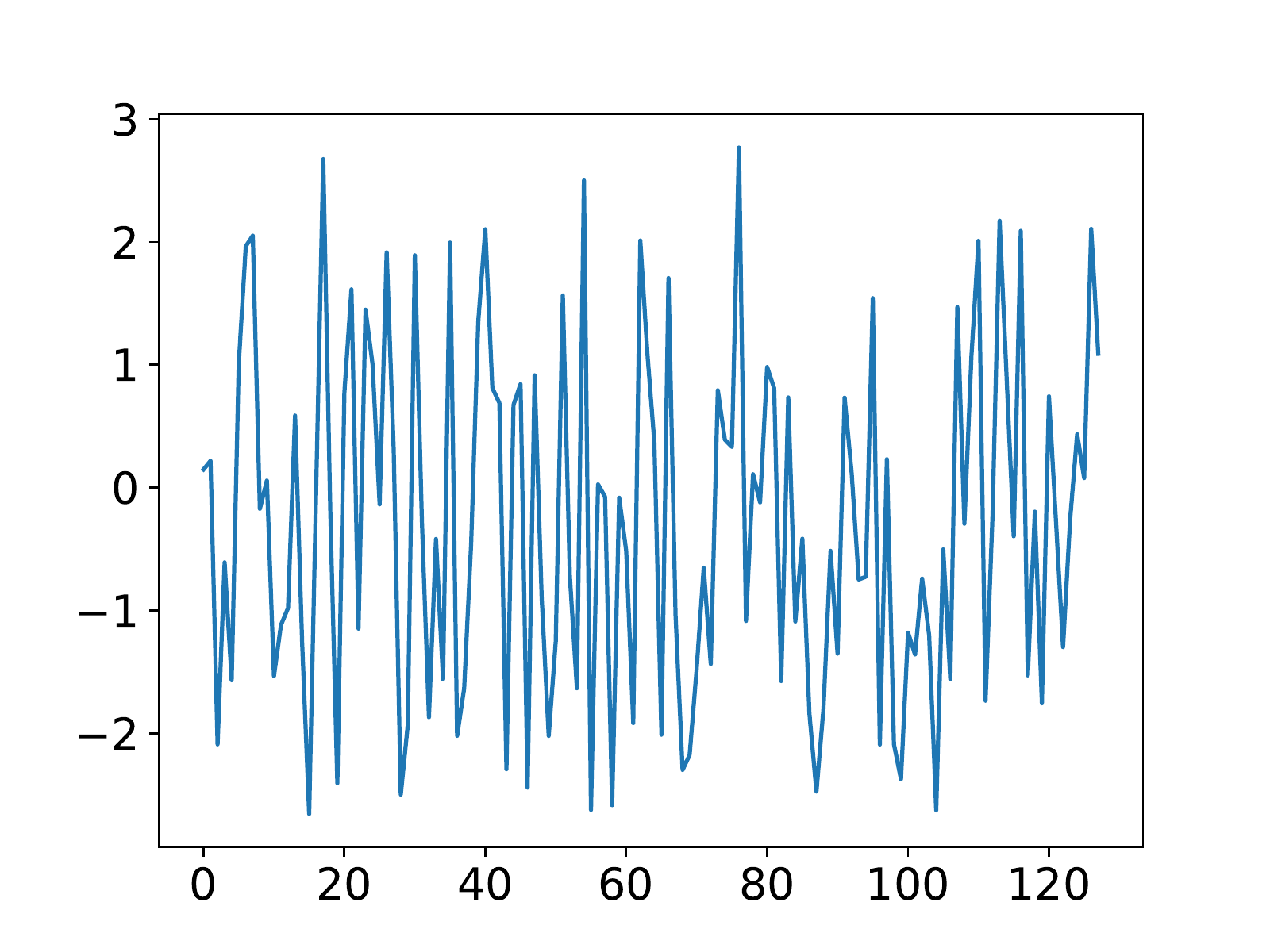}
        \caption{Real part of output data $\vec{y}$.}
    \end{subfigure}
    \caption{One instance in dataset DFT-Lfreq.}
    \label{fig:data-DFT-Lfreq}
\end{figure}

\begin{figure}[htp]
    \centering
    \begin{subfigure}[b]{0.48\textwidth}
        \centering
        \includegraphics[width=\linewidth]{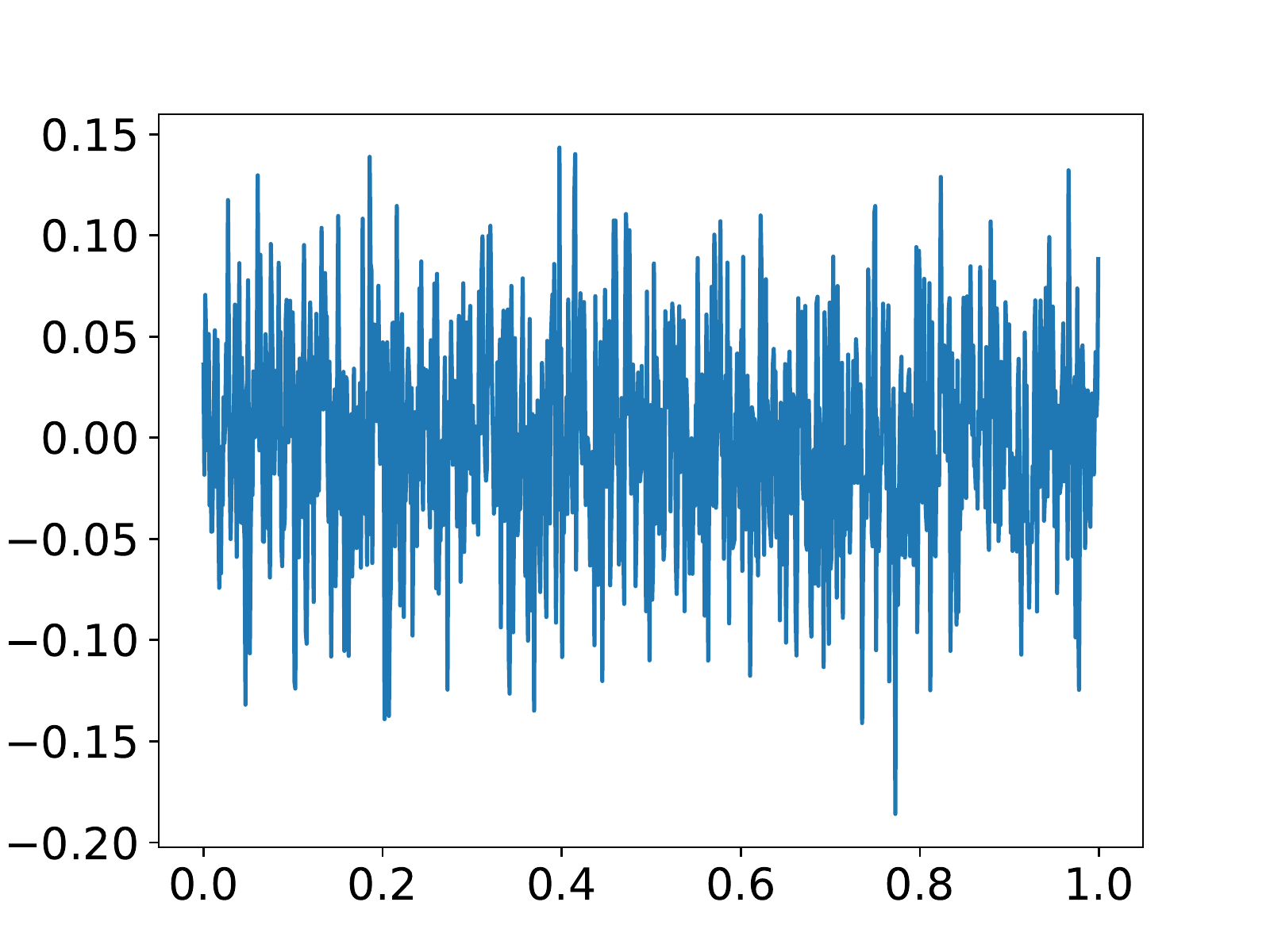}
        \caption{Input data $\vec{x}$.}
    \end{subfigure}
    \quad
    \begin{subfigure}[b]{0.48\textwidth}
        \centering
        \includegraphics[width=\linewidth]{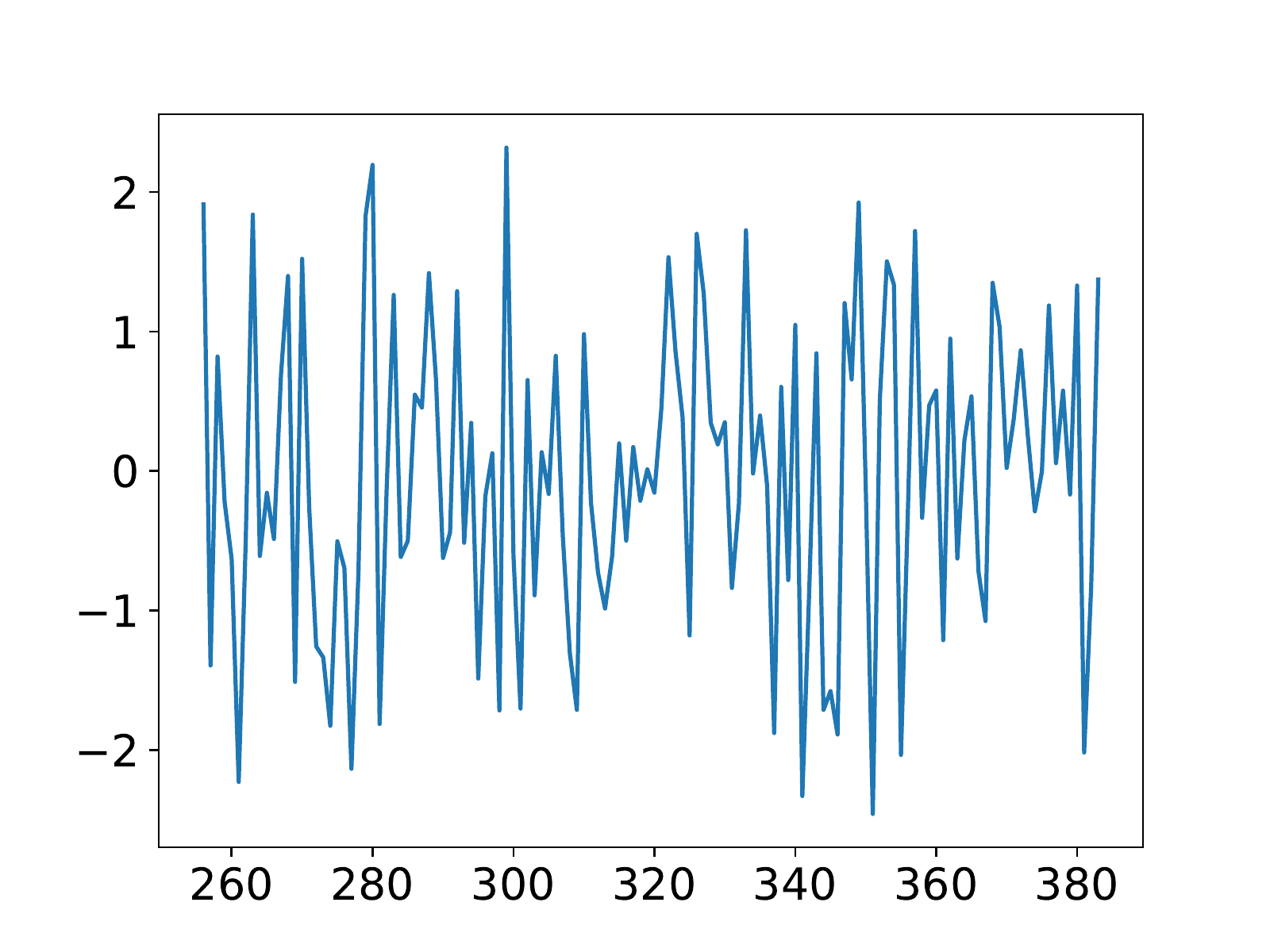}
        \caption{Real part of output data $\vec{y}$.}
    \end{subfigure}
    \caption{One instance in dataset DFT-Hfreq.}
    \label{fig:data-DFT-Hfreq}
\end{figure}

\begin{figure}[htp]
    \centering
    \begin{subfigure}[b]{0.48\textwidth}
        \centering
        \includegraphics[width=\linewidth]{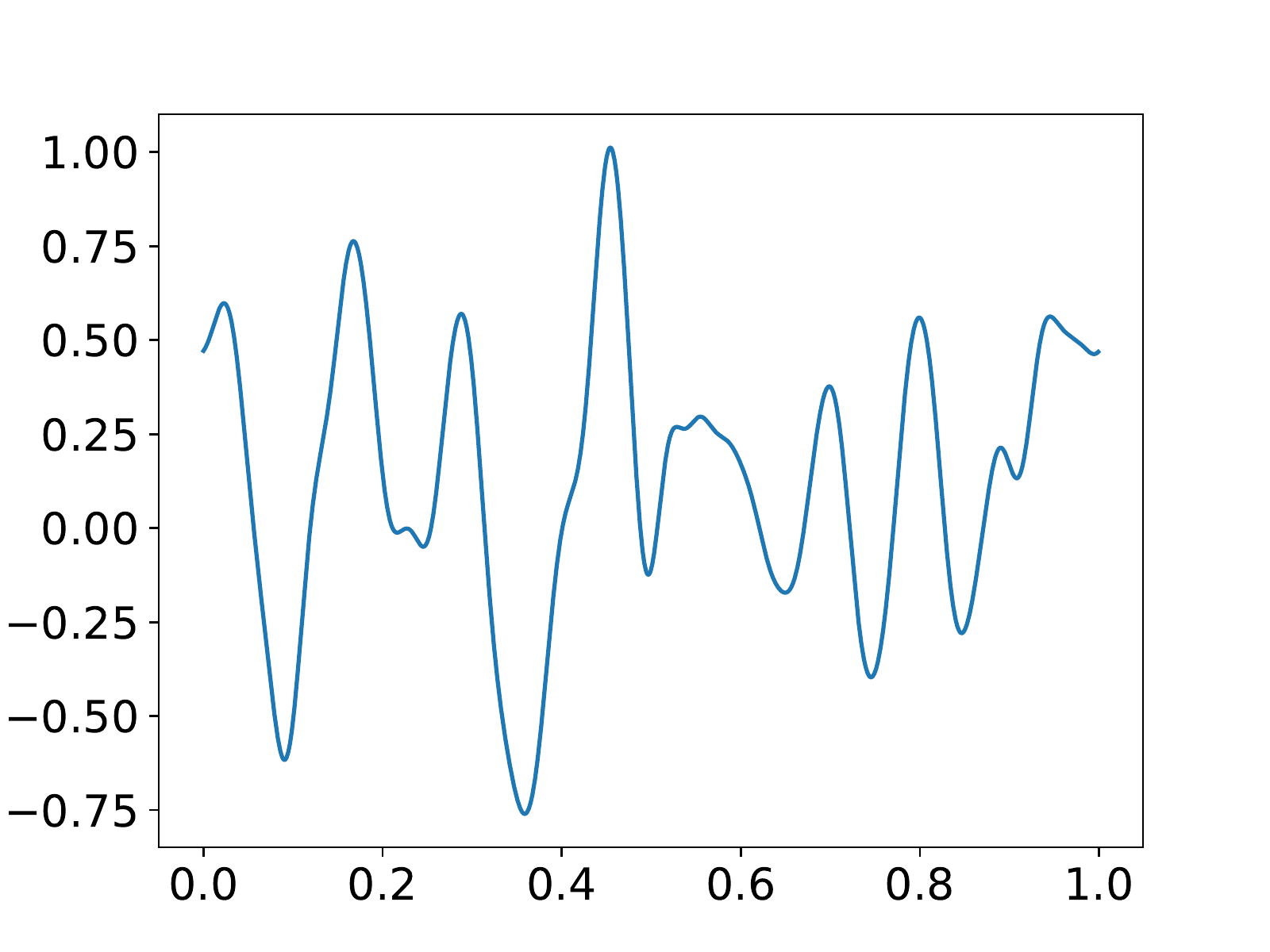}
        \caption{Input data $\vec{x}$.}
    \end{subfigure}
    \quad
    \begin{subfigure}[b]{0.48\textwidth}
        \centering
        \includegraphics[width=\linewidth]{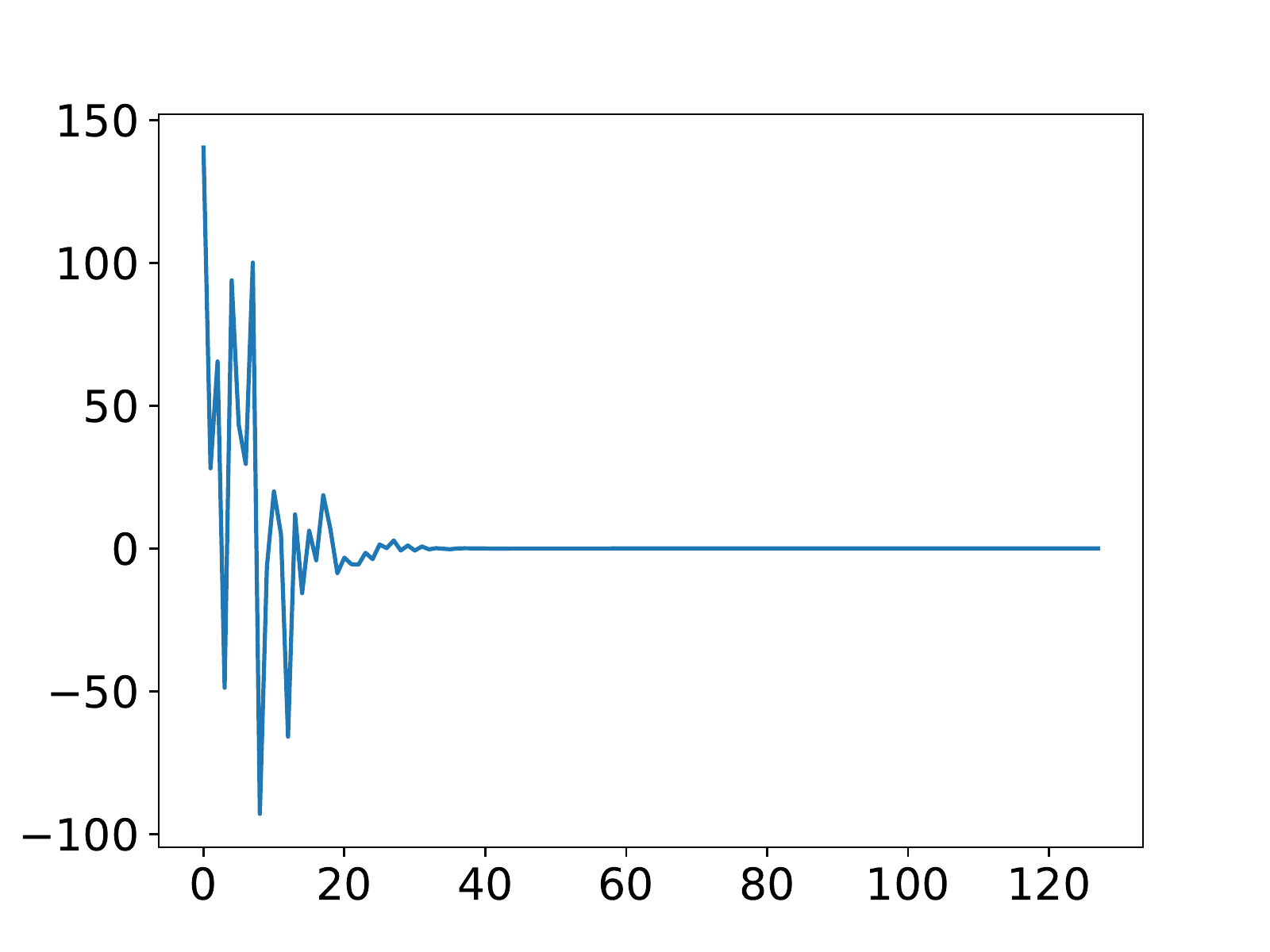}
        \caption{Real part of output data $\vec{y}$.}
    \end{subfigure}
    \caption{One instance in dataset DFTSmooth-Lfreq.}
    \label{fig:data-DFTSmooth-Lfreq}
\end{figure}

\begin{figure}[htp]
    \centering
    \begin{subfigure}[b]{0.48\textwidth}
        \centering
        \includegraphics[width=\linewidth]{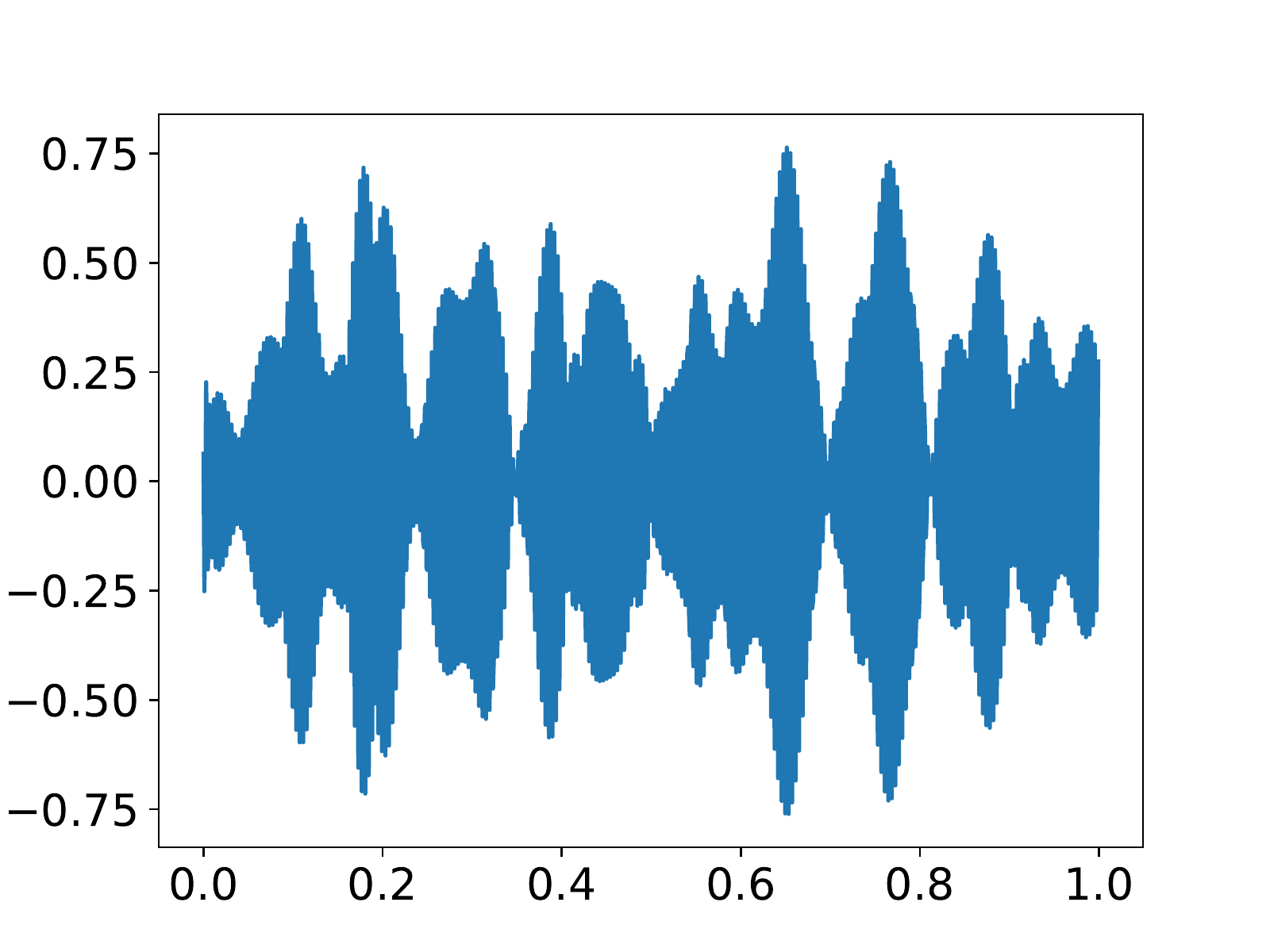}
        \caption{Input data $\vec{x}$.}
    \end{subfigure}
    \quad
    \begin{subfigure}[b]{0.48\textwidth}
        \centering
        \includegraphics[width=\linewidth]{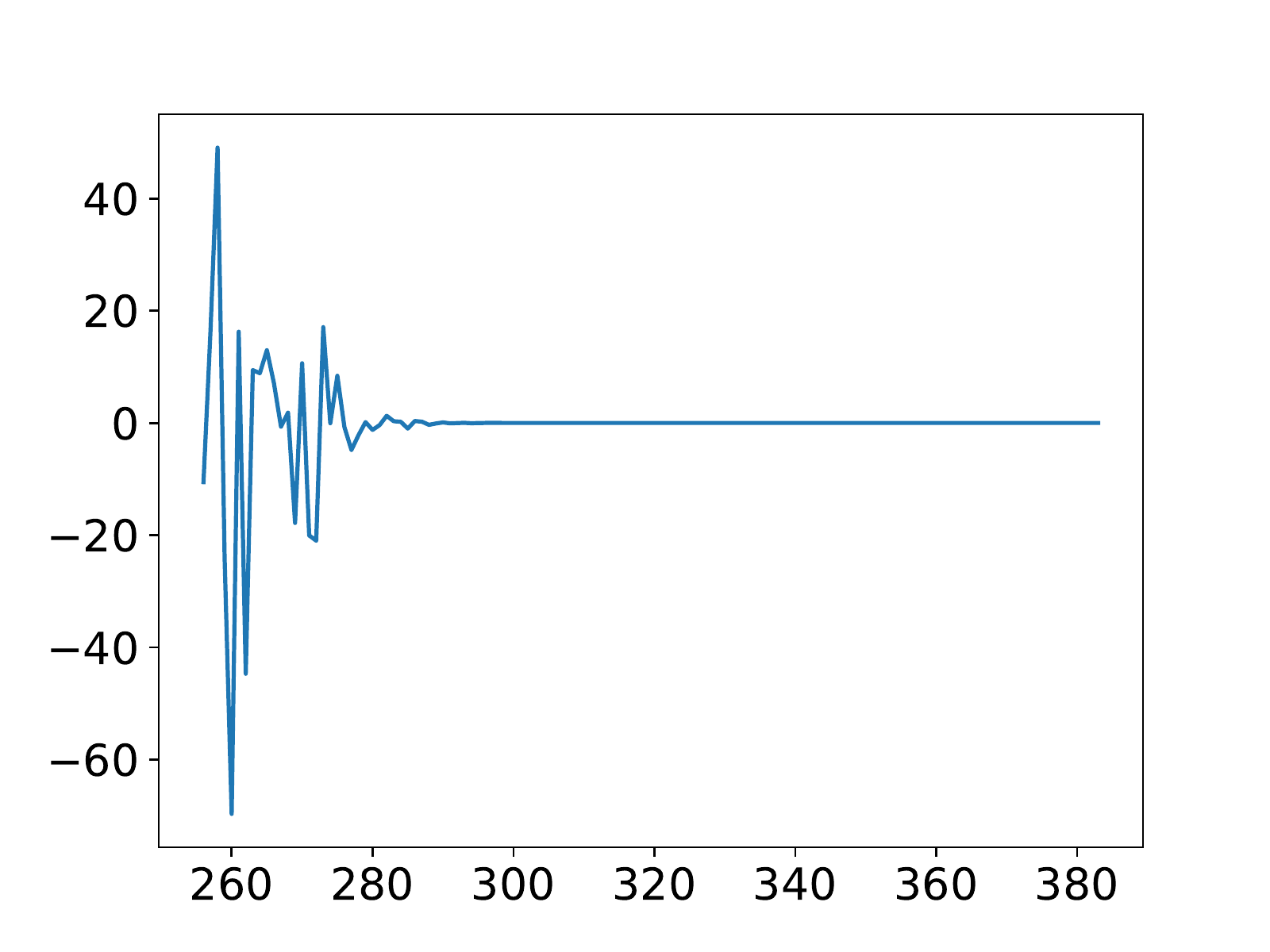}
        \caption{Real part of output data $\vec{y}$.}
    \end{subfigure}
    \caption{One instance in dataset DFTSmooth-Hfreq.}
    \label{fig:data-DFTSmooth-Hfreq}
\end{figure}

\section{Convergence behaviors}
\label{app:conv-behaviors}

We provide four convergence behaviors for the training
loss of \NetName{} and \INetName{} with random and {\it
Butterfly} initialization applied to DFTSmooth-Lfreq in
Figure~\ref{fig:conv-behaviors}. In all four networks, the number
of layers after switch layer is $L_\xi = 1$. Other settings have the
similar convergence behaviors.

\begin{figure}[htp]
    \centering
    \begin{subfigure}[b]{0.48\textwidth}
        \centering
        \includegraphics[width=\linewidth]{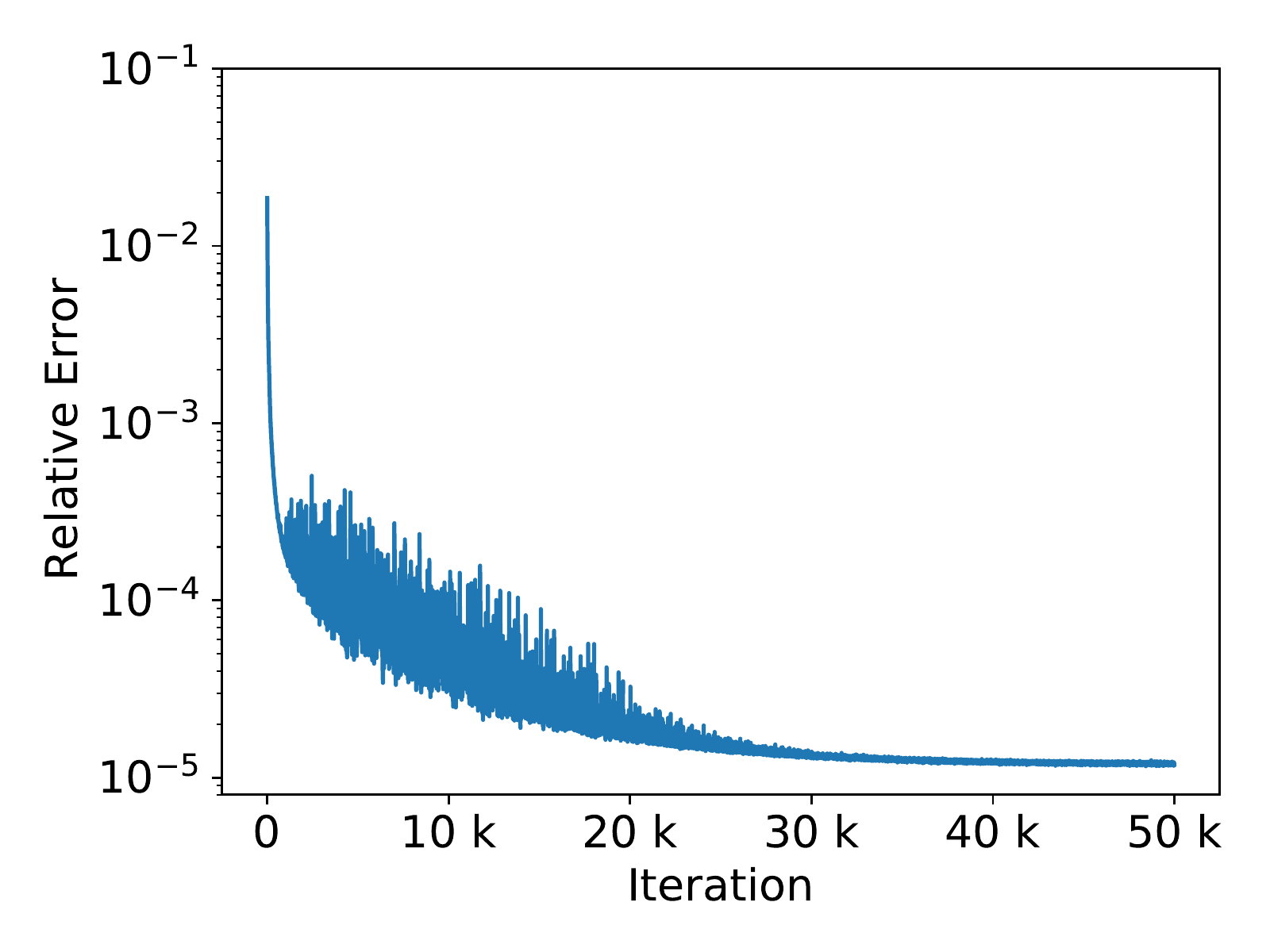}
        \caption{\NetName{} with {\it Butterfly} initialization.}
    \end{subfigure}
    \quad
    \begin{subfigure}[b]{0.48\textwidth}
        \centering
        \includegraphics[width=\linewidth]{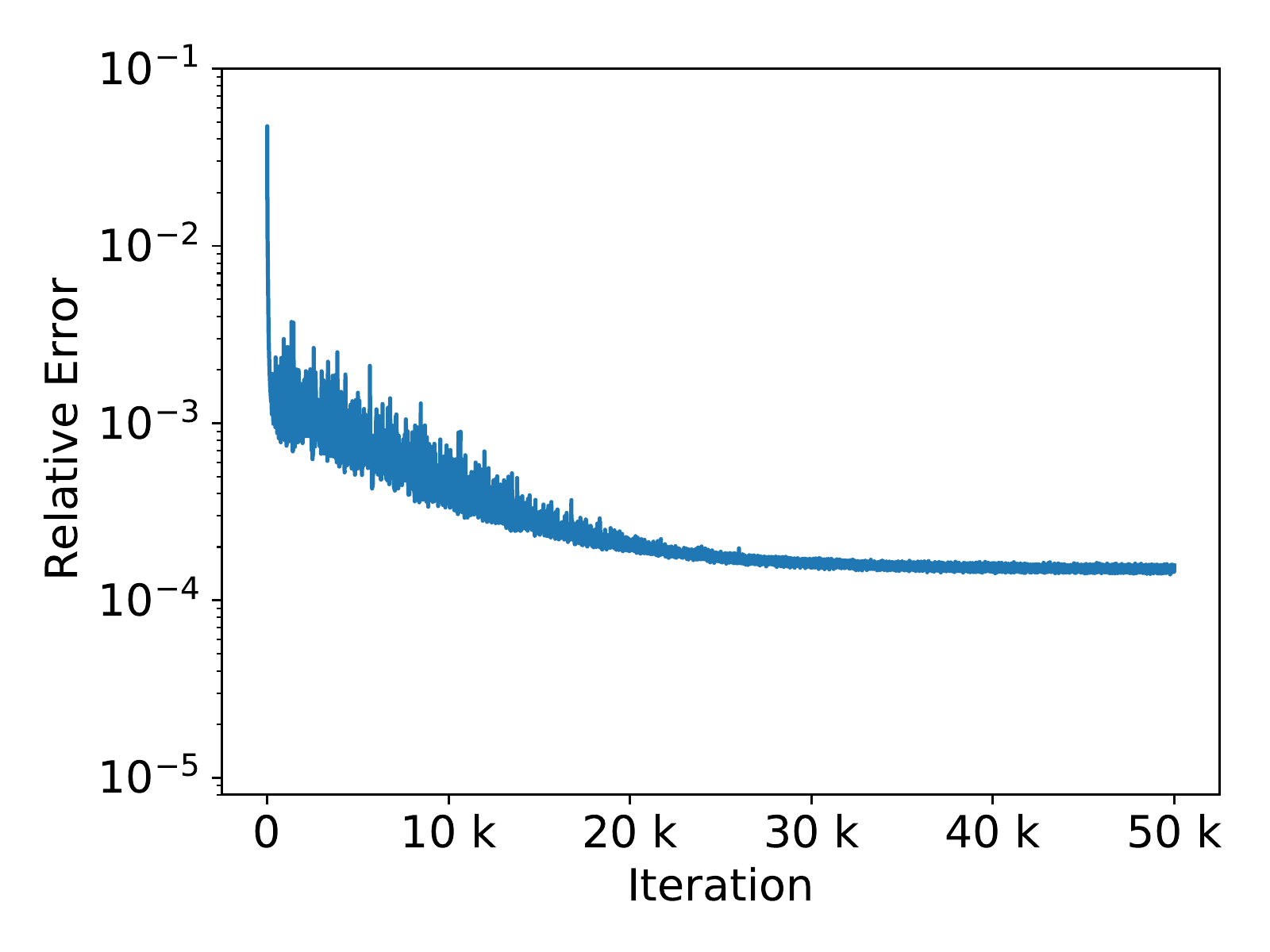}
        \caption{\INetName{} with {\it Butterfly} initialization.}
    \end{subfigure}
    \begin{subfigure}[b]{0.48\textwidth}
        \centering
        \includegraphics[width=\linewidth]{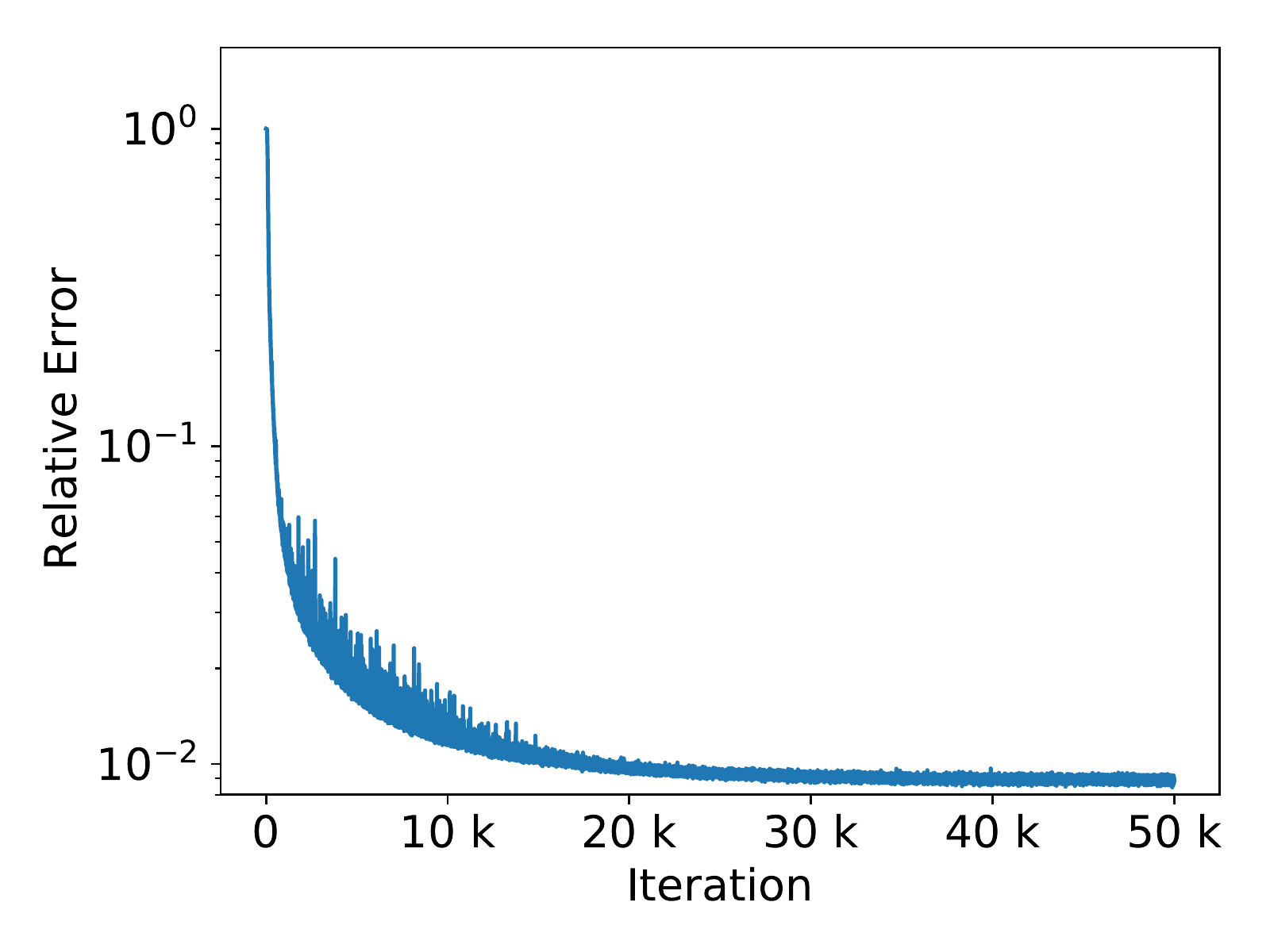}
        \caption{\NetName{} with random initialization.}
    \end{subfigure}
    \quad
    \begin{subfigure}[b]{0.48\textwidth}
        \centering
        \includegraphics[width=\linewidth]{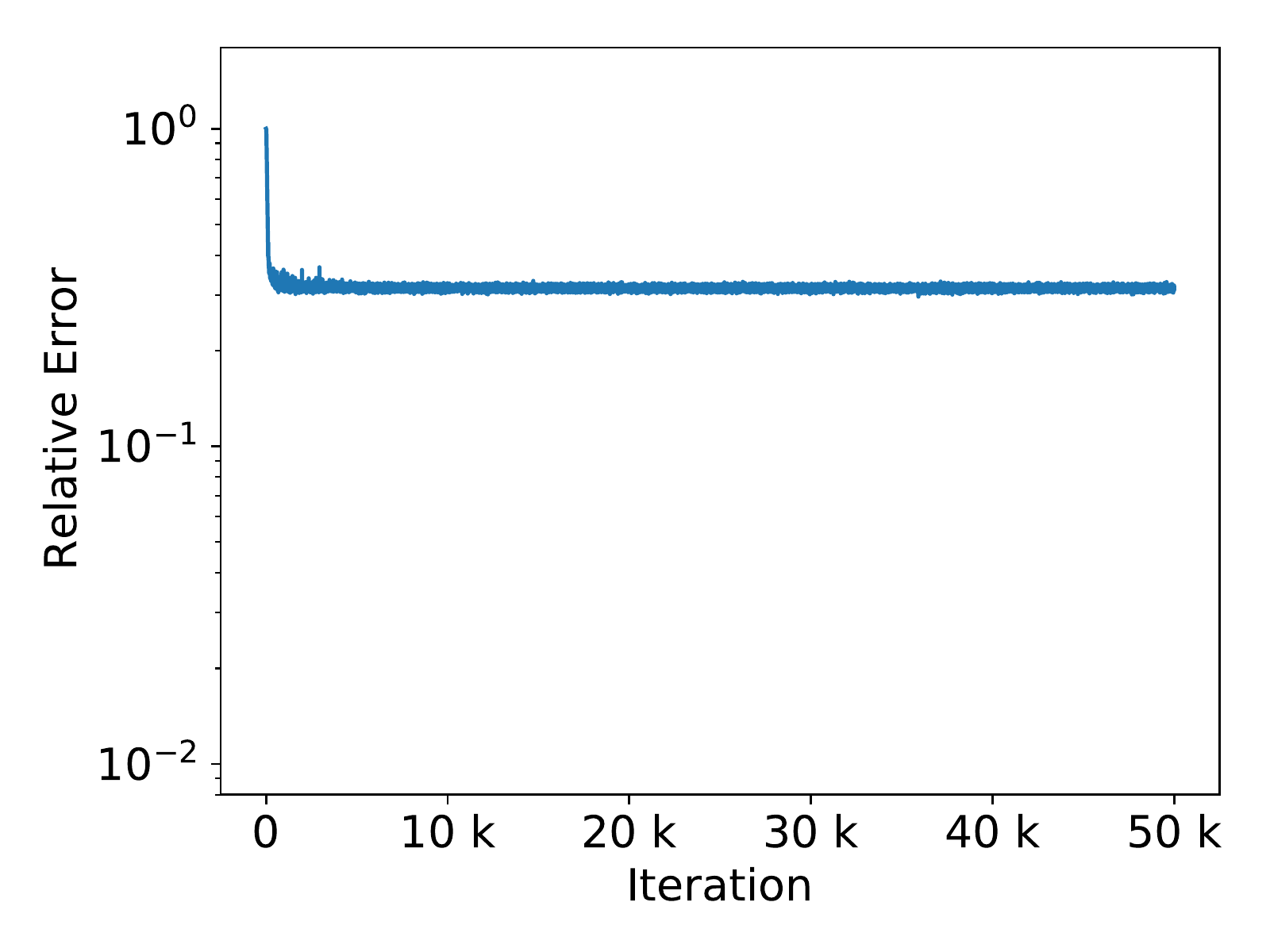}
        \caption{\INetName{} with random initialization.}
    \end{subfigure}
    \caption{Convergence behavior for various networks applied to
    DFTSmooth-Lfreq dataset. The number of layers after switch layer
    for all networks is $L_\xi = 1$.} \label{fig:conv-behaviors}
\end{figure}

\end{document}